\newtheorem{thm}{Theorem}
\newtheorem{prop}[thm]{Proposition}
\newtheorem{lem}[thm]{Lemma}
\newtheorem*{df}{Definition}
\newtheorem*{acknowledgement}{Acknowledgment}
\newcommand{\T}    {\mathbb{T}}
\newcommand{\D}    {\mathbb{D}}
\newcommand{\A}    {\mathbb{A}}
\newcommand{\C}    {\mathbb{C}}
\newcommand{\N}    {\mathbb{N}}
\newcommand{\Z}    {\mathbb{Z}}
\newcommand{\da}	{\mathfrak{D_f}}
\newcommand{\das}	{\mathfrak{D_f^*}}
\newcommand{\dr}	{\mathfrak{D}}
\newcommand{\Pj}	{\mathcal{P}}
\newcommand{\ho}	{\mathcal{H}}
\newcommand{\gm}	{G}
\newcommand{\gmc}	{M}
\newcommand{\szf}       {S}
\newcommand{\szc}       {D}
\newcommand{\scf}       {Q}
\newcommand{\ed}	{\omega_{[c,d]}}
\newcommand{\ged}	{\omega_{([c,d],\T)}}
\newcommand{\mes}	{\mu}
\newcommand{\map}	{\psi}
\newcommand{\mpc}	{\varphi}
\newcommand{\ct} 	{\mathfrak{f}}
\newcommand{\sr} 	{\mathfrak{w}}
\newcommand{\Arg} 	{\textnormal{Arg}}
\newcommand{\ang} 	{\textnormal{Angle}}
\newcommand{\cp}	{\kappa}
\newcommand{\E}		{\mathscr{E}}
\newcommand{\z}		{\textnormal{z}}
\newcommand{\x}		{{\bf x}}
\newcommand{\supp}	{\textnormal{supp}}
\newcommand{\im} 	{\textnormal{Im}}
\numberwithin{equation}{section}
\begin{document}

\title[Uniform Approximation of Cauchy Integrals]{\LARGE On Uniform Approximation of Rational Perturbations of Cauchy Integrals}

\author[M. Yattselev]{Maxim Yattselev}
\email{myattsel@sophia.inria.fr}
\address{INRIA, Project APICS \\
2004 route des Lucioles --- BP 93 \\
06902 Sophia-Antipolis, France}

\keywords{strong asymptotics, non-Hermitian orthogonality, meromorphic approximation, rational approximation, multipoint Pad\'e approximation.}
\subjclass{42C05, 41A20, 41A21, 41A30}

\begin{abstract}
Let $[c,d]$ be an interval on the real line and $\mes$ be a measure of the form $d\mes = \dot\mes d\ed$ with $\dot\mes=h\hbar$, where $\hbar(t)=(t-c)^{\alpha_c}(d-t)^{\alpha_d}$, $\alpha_c,\alpha_d\in[0,1/2)$, $h$ is a Dini-continuous non-vanishing function on $[c,d]$ with an argument of bounded variation, and $\ed$ is the normalized arcsine distribution on $[c,d]$. Further, let $p$ and $q$ be two polynomials such that $\deg(p) < \deg(q)$ and $[c,d]\cap \z(q) = \emptyset$, where $\z(q)$ is the set of the zeros of $q$. We show that AAK-type meromorphic as well as diagonal multipoint Pad\'e approximants to
\[
\ct(z) := \int\frac{d\mu(t)}{z-t} + \left(\frac pq\right)(z)
\]
converge locally uniformly to $\ct$ in $\da\cap\D$ and $\da$, respectively, where $\da$ is the domain of analyticity of $\ct$ and $\D$ is the unit disk. In the case of Pad\'e approximants we need to assume that the interpolation scheme is ``nearly'' conjugate-symmetric. A noteworthy feature of this case is that we also allow the density $\dot\mes$ to vanish on $(c,d)$, even though in a strictly controlled manner.
\end{abstract}

\maketitle

\section{Introduction}
\label{sec:intro}

Let $\ct$ be a function of the form
\begin{equation}
\label{eq:ct}
\ct(z) := \int\frac{d\mes(t)}{z-t} + \left(\frac pq\right)(z), \quad \z(q)\cap[c,d]=\emptyset,
\end{equation}
where $[c,d]=\supp(\mes)$ is the support of a complex Borel measure $\mes$, the polynomials $p$ and $q$ are coprime, $\deg(p)<\deg(q)=:m$, and $\z(q)$ is the set of zeros of $q$. Let $\ed$ be the equilibrium distribution for $[c,d]$, which is simply the normalized arcsine distribution. In this paper, we assume that $\mes$ is absolutely continuous with respect to $\ed$ and $\dot\mes$, its Radon-Nikodym derivative ($d\mes=\dot\mes d\ed$), is such that
\begin{subequations}
\label{eq:dotmes}
\begin{gather}
\label{eq:dotmes1} \dot\mes = h\hbar, \\
\label{eq:dotmes2} \quad\dot\mes = h\hbar\hbar_\x,
\end{gather}
\end{subequations}
where $h$ is a non-vanishing Dini-continuous function with argument of bounded variation on $[c,d]$, $\hbar(t) := |t-c|^{\alpha_c}|t-d|^{\alpha_d}$, $\alpha_c,\alpha_d\in[0,1/2)$, $\x\subset(c,d)$ is a finite set of distinct points, and $\hbar_\x(t) := \prod_{x\in\x}|t-x|^{2\alpha_x}$, $\alpha_x\in(0,1/2)$. Under such assumptions on $\ct$, we show locally uniform convergence of $L^p(\T)$-best meromorphic (in this case we assume that $[c,d]\subset(-1,1)$) and certain diagonal multipoint Pad\'e approximants to $\ct$ in
\begin{equation}
\label{eq:da}
\da:= \overline\C\setminus(\supp(\mes)\cup\z(q)),
\end{equation}
the domain of analyticity of $\ct$, where $\overline\C$ is the extended complex plane. It is known \cite{StahlTotik,BS02} that the denominators of both types of approximants satisfy non-Hermitian orthogonality relations with respect to $\mes$ that assume a similar form. This leads to similar integral representations for the error of approximation, which is the reason why we treat them simultaneously.

Generally speaking, meromorphic approximants (MAs) are functions meromorphic in the unit disk that provide an optimal approximation to $\ct$ on the unit circle in the $L^p$-norm when the number of poles is fixed. When considering them, it is customary to assume that $\supp(\mes)\cup \z(q)$ is contained in the unit disk, $\D$. The study of MAs originated from the work of V.M.~Adamyan, D.Z.~Arov, and M.G.~Krein \cite{AAK71}, where the case $p=\infty$ was considered. Nowadays such approximants are often called {\it AAK approximants}. The $L^p$-extensions of the AAK theory were obtained independently by L. Baratchart and F.~Seyfert \cite{BS02} and V.A.~Prokhorov \cite{Pr02}. Meromorphic approximation problems have natural extension to Jordan domains with rectifiable boundary when the approximated function $\ct$ is meromorphic outside of a closed hyperbolic arc of this domain \cite{BMSW06}. However, we shall not consider such a generalization here. 

The AAK theory itself as well as its generalizations is based on the intimate relation between best (locally best) MAs and Hankel operator whose symbol is the approximated function \cite{AAK71,BS02, Pr02}. The study of the asymptotic behavior of MAs is, in fact, equivalent to the study of the asymptotic behavior of the singular vectors and singular numbers of the underlying Hankel operator (see Section~\ref{sec:mer}). Hence, the present work (more specifically, Theorems~\ref{thm:mer1} and~\ref{thm:mer2}) can be considered as an asymptotic analysis of the singular vectors of Hankel operators with symbols of type \eqref{eq:ct}--\eqref{eq:dotmes1}.

Let us briefly account for the existing results on convergence of MAs to functions of type (\ref{eq:ct}). Uniform convergence was obtained in \cite{BStW01} for the case $p=2$ (in this case meromorphic approximants reduce to rational functions) whenever $\mes$ is a positive measure and the rational summand is not present, i.e. $q\equiv1$ and necessarily $p\equiv0$, (such functions $\ct$ are called Markov functions). The general case $p\in[1,\infty]$ was addressed in \cite{BPS01a}, where again only Markov functions were considered and uniform convergence was shown under the assumption that $\mes$ belongs to the {\it Szeg\H{o} class}, i.e. $\log(d\mes(t)/dt)$ is integrable on $[c,d]$. The case of complex measures and non-trivial rational part was taken up in \cite{uBY1}, where convergence in capacity in $\D\setminus\supp(\mes)$ was obtained while $\supp(\mes)$ was assumed to be a regular set with respect to the Dirichlet problem and $\mu$ had to be sufficiently ``thick" on its support and have an argument of bounded variation.

On the other hand, diagonal multipoint Pad\'e approximants (PAs) are rational functions of type $(n,n)$ that interpolate $\ct$ in a system of $2n$ not necessarily distinct nor finite points (interpolation scheme) lying in $\da$ with one additional interpolation condition at infinity. Unlike the meromorphic case, it is pointless to assume that $\supp(\mes)$ and $\z(q)$ lie in $\D$. It is customary to call PA {\it classical} if all the interpolation points lie at infinity. Such approximants were initially studied by A.A.~Markov \cite{Mar95} using the language of continued fractions. Later, A.A.~Gonchar \cite{Gon75a} considered classical PAs to functions of type (\ref{eq:ct}) with nontrivial rational part and positive $\mes$. Locally uniform convergence to $\ct$ in $\da$ was obtained under the condition that $\mes$ belongs to the Szeg\H{o} class. Continuing this work, E.A. Rakhmanov has shown \cite{Rakh77b} that the restriction on $\mes$ to be in the Szeg\H{o} class cannot be relaxed in general, but if all the coefficients of $R$ are real, uniform convergence holds for any positive measure. In the recent paper \cite{GS04} A.A. Gonchar and S.P. Suetin proved that uniform convergence of classical PAs still holds if $\mes$ is a complex measure of the form $d\mes=hd\ed$, where $h$ is a non-vanishing analytic function in some neighborhood of $[c,d]$. Recently, using the operator-theoretic approach, M.S.~Derevyagin and V.A.~Derkach \cite{DerevDerk07} showed that there always exists a subsequence of diagonal PAs that converges locally uniformly to $\ct$ whenever the latter is such that $\mes$ is a positive measure and $p/q$ is real-valued on $\supp(\mes)$ but can have poles there. Finally, we mention a weaker result that holds for a larger class of complex measures. It was shown in \cite[Thm. 2.3]{uBY1} that multipoint Pad\'e approximants corresponding to ``nearly'' conjugate-symmetric interpolation schemes converge in capacity in $\C\setminus\supp(\mes)$ whenever $\supp(\mes)$ is a regular set with respect to the Dirichlet problem and $\mes$ is sufficiently ``thick" on its support and has an argument of bounded variation.

The main results of this paper are presented in Section \ref{sec:mer}, Theorems \ref{thm:mer1}--\ref{thm:mer4}, and Section \ref{sec:pade}, Theorems \ref{thm:pade1} and \ref{thm:pade2}. The conditions imposed on the measure $\mes$ in these theorems come from Theorem \ref{thm:ortho}. The latter is, in fact, a consequence of Theorems 2 and 3 in \cite{uBY3}. In particular, if Theorem \ref{thm:ortho} is established under other assumptions on $\mes$, this would yield Theorems \ref{thm:mer1}--\ref{thm:pade2} for this new class of measures. For instance, all the main results of the present work would hold whenever $\mes$ is of the form
\[
d\mes(t) = h(t)(t-c)^{\alpha_c}(d-t)^{\alpha_d}dt, 
\]
where $h$ is an $m$-times continuously differentiable non-vanishing function on $[c,d]$ with $m$-th derivative being $\varsigma$-H\"older continuous and $\alpha_c,\alpha_d\in(-1,\infty)\cap(-m-\varsigma,m+\varsigma)$ \cite{uBY5}.

\section{Preliminaries and Notation}
\label{sec:notation}

To smoothen the exposition of the material in main Sections \ref{sec:mer} and \ref{sec:pade}, we gather below some necessary prerequisites and notation.

Let $\T_s := \{z:|z|=s\}$, $\T_s^\pm := \T_s\cap\{z:\pm\im(z)\geq0\}$, and $\D_s := \{z:|z|<s\}$, $s>0$, be the circle, the semicircles, and the open disk centered at the origin of radius $s$. For simplicity, we drop the lower index $1$ for the unit circle (semicircles) and the unit disk.

Denote by $H^p$, $p\in[1,\infty]$, the {\it Hardy spaces} of the unit disk consisting of holomorphic functions $f$ in $\D$ such that
\begin{equation}
\label{eq:defNorm}
\begin{array}{lll}
\displaystyle \|f\|_p^p := \sup_{0<s<1}\frac{1}{2\pi}\int_{\T}|f(s\xi)|^p|d\xi|<\infty  & \mbox{ if } & p\in[1,\infty), \smallskip \\
\displaystyle \|f\|_{\infty}:=\sup_{z\in\D}|f(z)|<\infty  & \mbox{ if } & p=\infty.
\end{array}
\end{equation}
It is known \cite[Thm. I.5.3]{Garnett} that a function in $H^p$ is uniquely determined by its trace (non-tangential limit) on the unit circle and that the $L^p$-norm of this trace is equal to the $H^p$-norm of the function, where $L^p$ is the space of $p$-summable functions on $\T$. This way $H^p$ can be regarded as a closed subspace of $L^p$.

In the same vein, we define $\bar H_0^p$, $p\in[1,\infty]$, consisting of holomorphic functions in $\overline\C\setminus\overline\D$ that vanish at infinity and satisfy (\ref{eq:defNorm}) with $1<s<\infty$ and $z\in\overline\C\setminus\overline\D$, respectively. In particular, we have that $L^2=H^2\oplus\bar H^2_0$. Thus, we may define orthogonal projections $\Pj_+:L^2 \to H^2$ (analytic) and $\Pj_-:L^2 \to \bar H^2_0$ (antianalytic). It is easy to see that
\[
\int_\T\frac{h(\xi)}{\xi-z}\frac{d\xi}{2\pi i}  = \left\{
\begin{array}{lll}
\Pj_+(h)(z),  &  z\in\D, \smallskip \\
-\Pj_-(h)(z), & z\in\overline\C\setminus\overline\D, 
\end{array}
\right. \;\;\; h\in L^2.
\]
Recall also the well-known fact \cite[Cor. II.5.8]{Garnett} that any nonzero function in $H^p$ can be uniquely factored as $h=jw$, where 
\[
w(z)=\exp\left\{\frac{1}{2\pi}\int\frac{\xi+z}{\xi-z}\log|h(\xi)||d\xi|\right\}, \quad z\in\D,
\]
belongs to $H^p$ and is called the {\it outer factor} of $h$, while $j$ has modulus 1 a.e. on $\T$ and is called the {\it inner factor} of $h$. The latter may be further decompose as $j=bs$, where $b$ is a Blaschke product, i.e. a function of the form
\[
b(z) = z^k\prod_{z_j\neq0}\frac{-\bar z_j}{|z_j|}\frac{z-z_j}{1-\bar z_jz}
\]
that has the same zeroing as $h$, while $s$ is the {\it singular inner factor}. For simplicity, we often say that a function is outer (resp. inner) if it is equal to its outer (resp. inner) factor.

Continuing with the notation, for any point-set $K$ and any function $f\in H^p$, we denote by $K^*$ and $f^\sigma$ their reflections across $\T$, i.e., $K^* := \{z:1/\bar z\in K\}$ and $f^\sigma(z) := z^{-1}\overline{f(1/\bar z)}$. Clearly then $f\in\bar H_0^p$ and the map $\cdot^\sigma$ is idempotent. Further, for an interval $[c,d]$ we set
\begin{equation}
\label{eq:sr}
\cp:=4/(d-c), \quad \sr(z) := \sqrt{(z-c)(z-d)}, \quad \mbox{and} \quad \dr := \overline\C\setminus[c,d],
\end{equation}
where such a branch of $\sr$ is chosen that $\sr$ is holomorphic in $\dr\setminus\{\infty\}$ and $\sr(z)/z\to1$ as $z\to\infty$. Then
\[
\widetilde\sr(z) := 1/(1/\sr(z))^\sigma = z^2\sr^\sigma(z) = \sqrt{(1-cz)(1-dz)},
\]
is holomorphic in $\dr^*\setminus\{\infty\}$ and $\widetilde\sr(0)>0$. Moreover, the function
\begin{equation}
\label{eq:map}
\map(z) := \frac{2z-(d+c)-2\sr(z)}{d-c}, \quad z\in\dr,
\end{equation}
is the conformal map of $\dr$ onto $\D$ such that $\map(\infty) = 0$ and $\map^\prime(\infty)=\cp>0$. It is also easy to see that $\map$ has well-defined unrestricted boundary values from both side of $[c,d]$ (we assume that the positive side of $[c,d]$ lies on the left when the interval is traversed in the positive direction, i.e.  from $c$ to $d$). Moreover, it holds that
\begin{equation}
\label{eq:psipm}
\map^+\map^- = 1  \quad \mbox{on} \quad [c,d].
\end{equation}

Let now $h$ be a Dini-continuous non-vanishing complex-valued function on $[c,d]$. Recall that Dini-continuity means
\[
\int_{[0,d-c]}\delta^{-1}\max_{|t_1-t_2|\leq\delta}|h(t_1)-h(t_2)|d\delta<\infty.
\]
It can be easily checked (cf. \cite[Sec. 3.3]{uBY3}) that the {\it geometric mean} of $h$, i.e.
\begin{equation}
\label{eq:geommean}
\gm_h := \exp\left\{\int\log h(t)d\ed(t)\right\},
\end{equation}
is independent of the actual choice of the branch of the logarithm and is non-zero. Moreover, the {\it Szeg\H{o} function} of $h$, i.e.
\begin{equation}
\label{eq:szego}
\szf_h(z) := \exp\left\{\frac{\sr(z)}{2}\int\frac{\log h(t)}{z-t}d\ed(t) - \frac12\int\log h(t)d\ed(t)\right\},
\end{equation}
$ z\in\dr$, also does not depend on the choice of the branch (as long as the same branch is taken in both integrals) and is a non-vanishing holomorphic function in $\dr$ that has continuous boundary values from each side of $[c,d]$ and satisfies
\begin{equation}
\label{eq:szegodecomp}
h = \gm_h\szf_h^+\szf_h^- \;\; \mbox{on} \;\; [c,d] \;\; \mbox{and} \;\; S_h(\infty)=1.
\end{equation}
The continuity of the traces of $\szf_h$ is ensured by the Dini-continuity of $h$, essentially because Dini-continuous functions have continuous conjugates \cite[Thm. III.1.3]{Garnett}. In fact, more can be said. Let $a$ be a non-vanishing holomorphic function in $\dr$ that has continuous traces on each side of $[c,d]$ and $a(\infty)=1$. Suppose also that $a^+a^-=c^2$ for some constant $c$. Then the functions $a_i\circ\map:=c/a$ and $a_e\circ(1/\map):=a/c$ are holomorphic in $\D$ and $\overline\C\setminus\overline\D$, respectively, have continuous traces on $\T$, $a_i(0)=c$ and $a_e(\infty)=1/c$. Moreover, it can be readily verified that the traces of $a_i$ and $a_e$ coincide. Thus, $a_i$ and $a_e$ are analytic continuations of each other, from which we deduce by Liouville's theorem that $c=1$ and $a_i\equiv a_e\equiv1$. This simple observation implies the following. Let $h$ be a Dini-continuous function on $[c,d]$ and $\gm$ be some constant. If $\szf$ is a non-vanishing holomorphic function in $\dr$ that assumes value 1 at inifnity, has continuous traces, and is such that $\gm\szf^-\szf^+=h$ then necessarily $\gm=\gm_h$ and $\szf=\szf_h$. It is also true that (\ref{eq:szego}) is well-defined whenever $h$ is a non-negative integrable function with integrable logarithm; like, for example, $\hbar$ and $\hbar_\x$ defined after \eqref{eq:dotmes}.

We also emphasize that the Szeg\H{o} function of a polynomial can be computed in a rather explicit manner as we will now see. Let $v$ be a polynomial with zeros in $\dr$, $\deg(v)\leq k$. Set
\begin{equation}
\label{eq:rn}
r_k(v;z) := \left(\map(z)\right)^{k-\deg(v)}\prod_{e\in\z(v)}\left( \frac{\map(z)-\map(e)}{1-\map(z)\map(e)}\right)^{m(e)}, \quad z\in\dr,
\end{equation}
where $\z(v)$ is the set of zeros of $v$ and $m(e)$ is the multiplicity of $e\in\z(v)$. Then $r_k(v;\cdot)$ is a holomorphic function in $\dr$ with a zero of multiplicity $m(e)$ at each $e\in\z(v)$ and a (possible) zero of multiplicity $k-\deg(v)$ at infinity. Moreover, it has unrestricted continuous boundary values from both sides of $[c,d]$ such that
\begin{equation}
\label{eq:rkpm}
r_k^+(v;\cdot)r_k^-(v;\cdot) = 1
\end{equation}
by \eqref{eq:psipm}. Then since $\szf_v$ is the unique function of Szeg\H{o} type such that $\szf^+_v\szf^-_v$ is equal to a constant multiple of $v$, it holds that
\begin{equation}
\label{eq:szegopoly}
\szf_v^2 = \frac{1}{\gm_v}\frac{v\map^k}{r_k(v;\cdot)}.
\end{equation}

In some cases it will be important to consider the ratio of the boundary values of Szeg\H{o} functions rather then their product. Hence, we introduce
\begin{equation}
\scf^\pm_h(t) := \szf^\pm_h(t)/\szf^\mp_h(t), \quad t\in[c,d].
\end{equation}
When $h$ is non-vanishing Dini-continuous function, $\scf_h^\pm$ are continuous on $[c,d]$ and assume the value 1 at the endpoints. Finalizing the discussion on Szeg\H{o} functions, let us state two of their properties that we shall use implicitly on several occasions and the reader will have no difficulty to verify. The first one is the multiplicativity property, i.e. $\szf_{h_1h_2}=\szf_{h_1}\szf_{h_2}$, and the second one is the convergence property which says that $\szf_{h_n}=[1+o(1)]\szf_h$ uniformly in $\overline\C$, i.e. including the boundary values, whenever $h_n=[1+o(1)]h$ uniformly on $[c,d]$. For more information on Szeg\H{o} functions of complex $h$, the reader may consult \cite[Sec. 3.3]{uBY3}.

Next, we denote by $\A_{s_1,s_2} := \{z:~s_1<|z|<s_2\}$, $0<s_1<s_2$, and $\A_s := \A_{s,1/s}$, $s<1$, the annuli centered at the origin and by $\mpc$ the conformal map from $\dr\cap\dr^*$ onto $\A_\rho$, $\mpc(1)=1$. Recall that annuli $\A_s$ are not conformally equivalent for different $s$ and therefore $\rho=\rho([c,d])$ is uniquely determined by $[c,d]$. From the potential-theoretic point of view $\rho$ can be expressed as
\begin{equation}
\label{eq:rho}
\rho = \exp\left\{-\frac{1}{\textnormal{cap}([c,d],\T)}\right\},
\end{equation}
where $\textnormal{cap}([a,b],\T)$ is the {\it capacity of the condenser} $([c,d],\T)$. The map $\mpc$ is given by \cite[Thm. VIII.6.1]{SaffTotik}
\begin{equation}
\label{eq:maps}
\mpc(z) = \exp\left\{\mathcal{T}^2\int_1^z\frac{dt}{(\sr\widetilde\sr)(t)}\right\}
\end{equation}
with integration taken along any path in $\dr\cap\dr^*$, where
\begin{equation}
\label{eq:tau}
\mathcal{T}^{-2} := \frac2\pi\int_{[0,1]}\frac{dx}{\sqrt{(1-x^2)((1-cd)^2-(d-c)^2x^2)}}.
\end{equation}
Moreover, it holds that $\mpc(\bar z)=\overline{\mpc(z)}$ and $\mpc(1/z)=1/\mpc(z)$, $z\in\dr\cap\dr^*$. Thus, $\mpc(\T)=\T$ and $\mpc(\D\setminus [c,d])=\A_{\rho,1}$. Further, it is not hard to check that $\mpc$ extends continuously on each side of $[c,d]$ (resp. $[c,d]^*$) and $\mpc^\pm([c,d])=\T_\rho^\pm$ (\mbox{resp.} $\mpc^\pm([c,d]^*)=\T_{1/\rho}^\pm)$. Finally, the \emph{Green equilibrium distribution} (which is a probability measure on $[c,d]$ \cite[Sec. II.5]{SaffTotik}) for the condenser $\D\setminus[c,d]$, as well as for the condenser $\dr\cap\dr^*$, is given by
\begin{equation}
\label{eq:ged}
d\ged(t) = \frac{\mathcal{T}^2dt}{\pi|(\sr^+\widetilde\sr)(t)|} = \frac{|d\mpc^+(t)|}{\pi\rho},
\end{equation}
where the normalization follows from \eqref{eq:tau} and the second equality holds by differentiating \eqref{eq:maps} and taking boundary values.

\section{Meromorphic Approximation}
\label{sec:mer}

The meromorphic approximants (MAs) that we deal with are defined as follows. For $p\in[1,\infty]$ and $n\in\N$, the {\it class of meromorphic functions of degree} $n$ in $L^p$ is
\begin{equation}
\label{eq:merfuns}
H^p_n := H^pB_n^{-1},
\end{equation}
where $B_n$ is the set of Blaschke products of degree at most $n$ (with at most $n$ zeros). By the celebrated theorem of Adamyan, Arov, and Krein \cite{AAK71}  (see also \cite[Ch. 4]{Peller}) and its generalizations \cite{BS02,Pr02} it is known that for any fixed $n\in\N$ and $p\in[1,\infty]$ and given $f\in L^p$ there exists a meromorphic function $g_n$ such that
\begin{equation}
\label{eq:merAppr}
\|f-g_n\|_p=\inf_{g\in H_n^p}\|f-g\|_p.
\end{equation}
Moreover, $g_n$ is unique when $p\in[1,\infty)$, but in the case $p=\infty$ it is necessary to assume $f\in H^\infty+C(\T)$ to ensure uniqueness of $g_n$, where $C(\T)$ is the space of continuous functions on the unit circle. Obviously, when $\supp(\mes)\subset\D$ and $q$ has no zeros on $\T$ the function $\ct$ defined in (\ref{eq:ct}) complies with these requirements for any $p\in[1,\infty]$. When $p<2$, no functional representation for the error is known to satisfy orthogonality relations \cite{BS02}. This is the reason why in what follows we shall restrict to the case $p\in[2,\infty]$.

Due to similar functional decomposition and their appearances in the computations\footnote{It is most likely that a numerical search ends up with stable critical points, that is locally best MAs, rather than just best MAs.}, we consider not only best MAs but more generally {\it critical point} of meromorphic approximation problem (\ref{eq:merAppr}). Although their definition is rather technical (see below), critical points are just those $g_n=h_n/b_n\in H_n^p$ (see (\ref{eq:merfuns})) for which the derivative of $\|f-g_n\|_p$ with respect to $b_n\in B_n$ and $h_n\in H^p$ does vanish \cite{BS02}. By definition, a function $g_n$ is a critical point of order $n$ in meromorphic approximation problem (\ref{eq:merAppr}) if and only if it assumes the form
\begin{equation}
\label{eq:errFunDec}
g_n=f-\frac{\ho_f(v_n)}{v_n} = \frac{\Pj_+(fv_n)}{v_n},
\end{equation}
where $\ho_f$ is {\it Hankel} operator with a symbol $f$, i.e.
\[
\ho_f:H^{p^\prime}\to\bar H^2_0, \;\;\; \ho_f(h):=\Pj_-(fh), \;\;\; 1/p + 1/p^\prime = 1/2,
\]
and $v_n\in H^{p^\prime}$ is of unit norm (a Blaschke product if $p=2$), its inner factor is a Blaschke product of exact degree $n$, and is such that
\[
\begin{array}{ll}
\ho_f^*\ho_f(v_n)=\sigma_n^2\Pj_+\left(|v_n|^{p^\prime-2}v_n\right) & ~\mbox{if } ~p>2, \\
\ho_f^*\ho_f(v_n)=\Pj_+\left(|\ho_f(v_n)|^2v_n\right) & ~\mbox{if } ~p=2,
\end{array}
\]
with $\ho_f^*$ being the adjoint operator. A function $v_n$ is called a {\it singular vector} associated to a critical point $g_n$ and 
\begin{equation}
\label{eq:CriticalValue}
\sigma_n:=\|f-g_n\|_p=\|\ho_f(v_n)\|_p, \;\;\; p\in[2,\infty],
\end{equation}
is called the {\it critical value} associated to $g_n$. In the case when $g_n$ is a best MA to $f$ it also holds that $\sigma_n$ is the $n$-th singular number of $\ho_f$, i.e.
\[
\sigma_n = \sigma_n(\ho_f):=\inf\left\{\|\ho_f-\Gamma\|:~\Gamma:H^{p^\prime}\to\bar H_0^2 ~\mbox{linear operator of rank}~\leq n\right\};
\]
when $p=2$ it is assumed in addition that $\Gamma$ is weak$^*$ continuous. Hereafter, we use the following notation for the inner-outer decomposition of singular vectors:
\begin{equation}
\label{eq:sinVecDec}
v_n= b_nw_n, \quad b_n=q_n/\widetilde q_n, \quad w_n(0)>0, \quad \widetilde q_n(z) = z^n\overline{q_n(1/\bar z)},
\end{equation}
where $w_n$ is an outer factor, $q_n$ is a monic polynomial of exact degree $n$, and $\widetilde q_n$ is the {\it reciprocal polynomial} of $q_n$. To uniformize the notation, we simply set $w_n\equiv1$ when $p=2$.

A critical point of order $n$ may have less than $n$ poles, even though we insisted in the definition that $v_n$ has exactly $n$ zeros. Cancellation may occur due to zeros of $\Pj_+(fv_n)$. When this is not the case, we shall call $g_n$ an {\it irreducible} critical point. It is worth mentioning that when $p\in[2,\infty)$ a best MA is not necessarily unique, but has exactly $n$ poles. Thus, all best MAs are irreducible critical points. To the contrary, if $p=\infty$, best MA is unique and is the only critical point of order $n$, but may have less then $n$ poles. However, there always exists a subsequence of natural numbers for which best AAK approximants are irreducible. Since the behavior of the poles of MAs is entirely characterized by this subsequence, hereafter we say ``a sequence of irreducible critical points'' to mean if $p=\infty$ that we pass to a subsequence if needed.

Now, we are ready to state the first theorem of this section.

\begin{thm}
\label{thm:mer1}
Let $\ct$ be given by \eqref{eq:ct} and \eqref{eq:dotmes1}. Further, let $\{g_n\}$ be a sequence of irreducible critical points of the meromorphic approximation problem to $\ct$, $p\in(2,\infty]$. Then the outer factors $w_n$ in \eqref{eq:sinVecDec} are such that
\begin{equation}
\label{eq:mer1}
w_n^{p^\prime/2} = \frac{\mathcal{T}+o(1)}{\widetilde\sr} + \frac{l_n}{\widetilde q}, \quad \widetilde q(z)=z^m\overline{q(1/\bar z)},
\end{equation}
where $o(1)$ holds locally uniformly in $\dr^*$, $\mathcal{T}$  was defined in \eqref{eq:tau}, and the polynomials $l_n$, $\deg(l_n)<m$, converge to zero and are coprime with $\widetilde q$.
\end{thm}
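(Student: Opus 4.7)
My plan is to derive a boundary relation for $w_n^{p^\prime/2}$ on $\T$ from the critical-point equation, extend it across $\T$ by Schwarz reflection, and read off the representation \eqref{eq:mer1} via a partial-fraction decomposition on $\dr^*$; the asymptotics of the coefficients will then come from Theorem~\ref{thm:ortho}. Specifically, I would first unpack the equation $\ho_\ct^*\ho_\ct(v_n)=\sigma_n^2\Pj_+(|v_n|^{p^\prime-2}v_n)$, valid for $p\in(2,\infty]$. Factoring $v_n=b_n w_n$ with $b_n=q_n/\widetilde q_n$ and using $|b_n|=1$, $|v_n|=|w_n|$ on $\T$, this translates into a boundary identity linking the outer function $w_n^{p^\prime/2}$ with the reflected data $\ct^\sigma$, $b_n^\sigma=\widetilde q_n^\sigma/q_n^\sigma$, and the $\Pj_+$-projection of $\ct v_n$. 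Using $\widetilde\sr^\sigma=\sr/z^2$, $(p/q)^\sigma=\widetilde p/\widetilde q$, and the fact that $w_n$ is outer in $\D$, one verifies that $w_n^{p^\prime/2}$ extends meromorphically from $\D$ into $\dr^*$. The only possible singularities in $\dr^*$ are a square-root branch along $[c,d]^*$, reflected from the Cauchy-kernel branch of $\ct$ on $[c,d]$, and at most simple poles at $\z(\widetilde q)$, produced by the rational summand $p/q$.

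Next, multiplying through by $\widetilde\sr$ absorbs the branch along $[c,d]^*$ and yields a function meromorphic on $\dr^*$ with simple poles in $\z(\widetilde q)$ and bounded growth at infinity (from $w_n(0)>0$ and $\widetilde\sr(0)>0$). A partial-fraction decomposition then produces precisely the ansatz
\[
w_n^{p^\prime/2}=\frac{A_n}{\widetilde\sr}+\frac{l_n}{\widetilde q},\qquad \deg l_n<m,
\]
with a scalar $A_n$ and a polynomial $l_n$. Coprimality of $l_n$ and $\widetilde q$ follows because $w_n^{p^\prime/2}$ cannot vanish at $\z(\widetilde q)\subset\dr^*$ (outer factors have no zeros in their domain of analyticity), so the putative poles at the zeros of $\widetilde q$ are actually present.

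Finally, the identification $A_n\to\mathcal{T}$ and $l_n\to 0$ comes from Theorem~\ref{thm:ortho}: the non-Hermitian orthogonality satisfied by $q_n$ yields Szeg\H{o}-type strong asymptotics for $q_n$ and $w_n$ in $\dr\cap\dr^*$, from which, comparing leading coefficients at infinity against \eqref{eq:tau}--\eqref{eq:maps}, one extracts $A_n=\mathcal{T}+o(1)$, while evaluation at the $m$ points of $\z(\widetilde q)$ yields $l_n\to 0$. I expect this last step to be the main obstacle: one must show that the Szeg\H{o} asymptotics of Theorem~\ref{thm:ortho}, valid locally uniformly in $\dr\cap\dr^*$, are strong enough to pin down the values of $w_n^{p^\prime/2}$ at the finitely many reflected zeros of $q$ with vanishing error, which is precisely where the orthogonality governed by the rational summand $p/q$ contributes the quantitative correction that drives $l_n\to 0$.
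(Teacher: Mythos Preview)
Your plan has the right ingredients (critical-point equation, extension across $\T$, Theorem~\ref{thm:ortho}), but the central structural claim is false: the decomposition $w_n^{p'/2}=A_n/\widetilde\sr+l_n/\widetilde q$ with a \emph{scalar} $A_n$ does not hold for any fixed $n$. What the critical-point equation actually produces (cf.\ \eqref{eq:hankelonvn}) is the identity
\[
\frac{\sigma_n}{\gamma_{n-m}}\bigl(j_nw_n^{p'/2}\bigr)^\sigma(z)=\frac{1}{\gamma_{n-m}}\int\frac{(b_n^2w_n)(t)}{z-t}\,d\mes(t)+\frac{\ell_n(z)}{q(z)},
\]
first for $|z|>1$ and then, by the explicit right-hand side, throughout $\da$. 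The Cauchy integral on the right is holomorphic in $\dr$ but is \emph{not} of the form $c/\sr$; it only satisfies $\gamma_{n-m}^{-1}\int(b_n^2w_n)(t)(z-t)^{-1}d\mes(t)=(1+o(1))/\sr$ locally uniformly in $\dr$, and that asymptotic requires Theorem~\ref{thm:ortho} (via the orthogonality \eqref{eq:orthoun} for $u_{n-m}$) together with Lemma~\ref{lem:auxN}. So ``multiply by $\widetilde\sr$ and take partial fractions'' cannot yield \eqref{eq:mer1}; the $o(1)$ in \eqref{eq:mer1} is a genuine function, not a vanishing constant, and it is the output of the asymptotic analysis rather than of an algebraic identity.

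Two further gaps. First, the critical-point relation involves an a priori unknown inner factor $j_n$ multiplying $w_n^{p'/2}$; you never mention it. The paper shows $j_n$ is a finite Blaschke product (because the right-hand side above extends holomorphically across $\T$) and then that $j_n\equiv1$ for large $n$ (because the limit in \eqref{eq:aa4} is zero-free in $\overline\D$). Without this, you cannot isolate $w_n^{p'/2}$. Second, the identification $A_n\to\mathcal{T}$ does not come from ``leading coefficients at infinity''; it comes from the normalization $\|w_n^{p'/2}\|_{L^2(\T)}=1$, which after deforming $\T$ onto $[c,d]$ gives $(\sigma_n/|\gamma_{n-m}|)^2\to\mathcal{T}^{-2}$ by the very definition \eqref{eq:tau}. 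A minor point: the poles at $\z(\widetilde q)$ need not be simple, since $q$ may have multiple zeros; the correct statement is that $\ell_n/q$ has a pole of exact order $m(\eta)$ at each $\eta\in\z(q)$, which the paper obtains from \eqref{eq:aa7} and \eqref{eq:asympyn}.
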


This theorem is a strengthening of Lemma 3.4 in \cite{uBY1} that asserts, under much milder assumptions on $\mes$, that $\left\{w_n^{p^\prime/2}\right\}$ is a normal family in $\das$ and any limit point of $\{w_n\}$ in $\D$ is zero free.

For simplicity, set $w:=\left(\mathcal{T}/\widetilde\sr\right)^{2/p^\prime}$ for each $p>2$. Then Theorem~\ref{thm:mer1} yields that $w_n \to w$, uniformly in some neighborhood of $\overline\D$ (it follows from the proof of Theorem \ref{thm:mer1} and can be seen from asymptotic formula \eqref{eq:mer1} that the outer factors $w_n$ can be extended to holomorphic functions in any simply connected neighborhood of $\overline\D$ contained in $\das$). Now, we are ready to describe the asymptotic behavior of irreducible critical points.

\begin{thm}
\label{thm:mer2}
Let $\ct$ and $\{g_n\}$ be as in Theorem \ref{thm:mer1}. Then the Blaschke products $b_n$ in \eqref{eq:sinVecDec} are such that
\begin{equation}
\label{eq:mer2}
b_n = [1+o(1)]b \mpc^{n-m}\szc_n^{-1} \quad \mbox{locally uniformly in} \quad \da\cap\das,
\end{equation}
where $b:=q/\widetilde q$, $m=\deg(q)$, $\{\szc_n\}$ is a normal family of non-vanishing functions in  $\dr\cap\dr^*$, such that  $|\szc^\pm_n|$ are uniformly bounded above and away from zero on $[c,d]$ and $[c,d]^*$. Moreover, the following error estimates take place
\begin{equation}
\label{eq:mer3}
\sigma_n = \|\ct-g_n\|_p =  \left[2\gmc\mathcal{T}^{-1}+o(1)\right]\rho^{2(n-m)},
\end{equation}
where $\sigma_n$ is the critical value associated to $g_n$ {\it via} \eqref{eq:CriticalValue}, and
\begin{equation}
\label{eq:mer4}
(\ct-g_n) = \frac{2\gmc+o(1)}{w\sr} \left(\frac{\rho}{\mpc}\right)^{2(n-m)} \frac{\szc_n^2}{b^2}
\end{equation}
uniformly on compact subsets of $\da\cap\overline\D$, where
\begin{equation}
\label{eq:m}
\gmc := \exp\left\{ \int \log\left|(b^2w\dot\mes)(t)\right| d\ged(t) \right\},
\end{equation}
is the geometric mean of $|b^2w\dot\mes|$ with respect to the condenser $\dr\cap\dr^*$.
\end{thm}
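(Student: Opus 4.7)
The strategy is to combine the strong asymptotics of the monic polynomial $q_n$ in the Blaschke factor $b_n = q_n/\widetilde q_n$ (supplied by Theorem~\ref{thm:ortho}) with the limit $w_n^{p'/2}\to\mathcal{T}/\widetilde\sr$ from Theorem~\ref{thm:mer1}. The first step is to convert the error into a Cauchy integral over $[c,d]$. Starting from $(\ct-g_n)=\ho_\ct(v_n)/v_n=\Pj_-(\ct v_n)/v_n$ and expressing $\Pj_-$ as a contour integral along $\T$, one collapses the contour onto $[c,d]$ using Cauchy's theorem; the poles of the rational part $p/q$ contribute a correction that is precisely cancelled by the Blaschke factor $b:=q/\widetilde q$ appearing in the final formula. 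What remains is, schematically,
\begin{equation*}
(\ct-g_n)(z) \;=\; \frac{1}{v_n(z)}\int \frac{v_n(t)}{z-t}\,d\mes(t), \quad z\in\da\cap\overline\D,
\end{equation*}
up to the explicit Blaschke correction. The orthogonality content of Theorem~\ref{thm:ortho} now applies: $q_n$ is a non-Hermitian orthogonal polynomial against the effective weight $b^2 w_n \dot\mes$ on $[c,d]$, and its strong asymptotic takes the form $q_n = [1+o(1)]\, \szf^{-1}\mpc^{-n}$ locally uniformly in $\dr$, where $\szf$ is the Szeg\H{o} function of $b^2 w \dot\mes$ (the replacement $w_n\to w$ is legitimized by Theorem~\ref{thm:mer1} together with the convergence property of Szeg\H{o} functions recorded in Section~\ref{sec:notation}).

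From here, (\ref{eq:mer2}) follows by forming the ratio $b_n = q_n/\widetilde q_n$: since $\mpc(1/z)=1/\mpc(z)$, the reciprocal polynomial $\widetilde q_n$ obeys a mirrored asymptotic with $\mpc^{-n}$ replaced by $\mpc^n$ in $\dr^*$, and dividing yields $\mpc^{-2n}$ times a quotient of Szeg\H{o} functions. Pulling out the prescribed normalization $b\mpc^{n-m}$ identifies $\szc_n$ as the remaining Szeg\H{o} data (absorbing the small factor coming from $w_n/w$); Dini-continuity of $h$ ensures that $\szf$ has continuous traces on each side of $[c,d]$, so $|\szc_n^\pm|$ are uniformly bounded above and away from zero. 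For (\ref{eq:mer4}), I substitute the asymptotic of $q_n$ into the Cauchy integral and evaluate by a Szeg\H{o}-type argument: the identity $\mpc^+\mpc^-=1$ on $[c,d]$ (see (\ref{eq:psipm})) converts the product of boundary values into $1$, and the leading behaviour picks up the factor $\rho^{2(n-m)}$ together with
\begin{equation*}
\gmc \;=\; \exp\!\int \log|b^2 w\dot\mes|\,d\ged,
\end{equation*}
the geometric mean of the weight relative to the Green equilibrium density (\ref{eq:ged}); the prefactor $1/(w\sr)$ comes from the explicit form of $d\ged$ and the limit of $w_n$, and $\szc_n^2/b^2$ records the Szeg\H{o} and Blaschke data. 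Finally, (\ref{eq:mer3}) follows from $\sigma_n=\|(\ct-g_n)v_n\|_p$ evaluated on $\T$ using $|v_n|_{\T}\to|w|$ and the normalization (\ref{eq:tau}), which yields the constant $2\gmc\mathcal{T}^{-1}$.

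The main obstacle is propagating the asymptotics uniformly up to $\T$, rather than only on compact subsets of $\dr$ or $\D$. This requires Theorem~\ref{thm:ortho}'s strong asymptotics to hold in a neighbourhood of $\overline\D$ disjoint from $[c,d]$ (which is available because $\das$ contains such a neighbourhood in $\da$), and it uses Dini-continuity of $h$ to control the Szeg\H{o} traces $\szc_n^\pm$ across $[c,d]$ and at the endpoints $c,d$. A secondary delicate point is the exact cancellation of the rational summand $p/q$ by the Blaschke factor $b^2$ in the effective weight: verifying that the contour deformation from $\T$ onto $[c,d]$ produces precisely the $b^2$ appearing inside $\gmc$ and in the denominator of (\ref{eq:mer4}) is the most technical bookkeeping step, and it is the reason why both the limit formula for $\gmc$ and the algebraic prefactor $\szc_n^2/b^2$ take the form they do.
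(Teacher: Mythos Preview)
Your sketch conflates the two conformal maps $\map$ and $\mpc$ in a way that hides the actual difficulty. Theorem~\ref{thm:ortho} gives the asymptotics of $u_{n-m}$ (not of $q_n$ itself --- the factorization $q_n=u_{n-m}q_{n,m}$ from Lemma~\ref{lem:auxS} is used first) in terms of $\map$, the conformal map of the \emph{simply} connected domain $\dr$ onto $\D$, and of the Szeg\H{o} function $\szf_{\dot\nu_{n-m}}$ for $\dr$. The statement you are proving is in terms of $\mpc$, the conformal map of the \emph{doubly} connected domain $\dr\cap\dr^*$ onto the annulus $\A_\rho$, and of the \emph{condenser} Szeg\H{o} function $\szc_n$ from Proposition~\ref{pr:2}. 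These are different objects; in particular $\map^+\map^-=1$ on $[c,d]$ while $|\mpc^\pm|=\rho$ there, so your appeal to ``$\mpc^+\mpc^-=1$'' is simply false. The sentence ``$q_n=[1+o(1)]\szf^{-1}\mpc^{-n}$'' has no source: Theorem~\ref{thm:ortho} produces $(\cp\map)^{n-m}$, not $\mpc^{n-m}$, and the mirroring argument you suggest for $\widetilde q_n$ does not convert one into the other.

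The passage from $\map$-asymptotics to $\mpc$-asymptotics is the core of the proof and you have omitted it entirely. In the paper one uses the explicit formula \eqref{eq:szegopoly} to rewrite $\szf_{\widetilde q^2/\widetilde q_n^2}$ via the Blaschke-type products $r_n=r_n(\widetilde q_n;\cdot)$, arriving at \eqref{eq:zz3}. One then \emph{defines} $X_n:=(\szf_{b^2w\dot\mes}/\hat\gmc\szc_n)(r_n/r)(\mpc/\rho)^{n-m}$ and an unknown constant $\lambda_n$, and observes that $|X_n^\pm|\equiv1$ on $[c,d]$ by construction of $\szc_n$. The decisive step is to apply Lemma~\ref{lem:BStW} --- a two-sided maximum/minimum principle for harmonic functions with zero period on a ring domain --- to $\log|X_n|$ on $\D\setminus[c,d]$, comparing its boundary values on $[c,d]$ (which are $0$) with those on $\T$ (which are $-\log|\lambda_n|+o(1)$ by \eqref{eq:zz3}). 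This is what pins down $|\lambda_n|=1+o(1)$ and hence $\gamma_{n-m}=[1+o(1)]2\gmc\rho^{2(n-m)}$; without it neither the constant $\gmc$ nor the rate $\rho^{2(n-m)}$ in \eqref{eq:mer3}--\eqref{eq:mer4} is determined. Your proposal does not contain any mechanism that plays the role of this period/maximum-principle argument, so as written it cannot identify the constants and does not close.
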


It is worth mentioning that the functions $\szc_n$ are, in fact, {\it Szeg\H{o} functions for the condenser} $\dr\cap\dr^*$ that first were introduced in \cite[Def. 2.38]{BStW01} for the case of a positive measure $\mes$. In such a situation the Szeg\H{o} function for a condenser has an integral representation that is no longer valid for complex measures. Moreover, the normalization in the complex case is more intricate (see Proposition \ref{pr:2}). Nevertheless, it still holds that the functions $\szc_n$ have zero winding number on any curve separating $[c,d]$ from $[c,d]^*$, $\szc_n(z)\overline{\szc_n(1/\bar z)}=1$, $z\in\dr\cap\dr^*$, and $\gmc|\szc_n^+\szc_n^-|=|b^2w\dot\mes|$ on $[c,d]$.

We remind the reader that the case $p=2$ has a couple of special traits. First, best MA $g_n$ specializes to a rational function. Indeed, $g_n$ can be written as a sum $h_n+p_{n-1}/q_n$, where $h_n\in H^2$, $\deg(p_{n-1})<\deg(q_n) = n$. As $L^2=H^2\oplus\bar H^2_0$ and $\ct\in\bar H^2_0$, we have that
\[
\|\ct-g\|_2^2 = \|h\|_2^2 + \|\ct-p_{n-1}/q_n\|_2^2.
\]
Hence, to achieve the minimum of the left-hand side of the equality above, one necessarily should take $h\equiv0$. This is the reason why we referred on some occasions to the meromorphic approximation problem with $p=2$ as to the rational approximation problem. Second, the outer factors $w_n$ in \eqref{eq:sinVecDec} are not present, or better assumed to be identically 1. The latter allows us to consider a slightly larger class of measures, namely those given by \eqref{eq:dotmes2}.

\begin{thm}
\label{thm:mer3}
Let $\ct$ be given by \eqref{eq:ct} and \eqref{eq:dotmes2}, where $\sin(\alpha_x\pi)\in(0,\Psi_x)$,
\[
\Psi_x := \min_{\pm}\{|\scf^\pm_{q^2h}(x)|\}\exp\left\{-\frac{4s_1[V_h+2m\pi]}{1-s_0}\right\}, \quad x\in\x,
\]
$V_h$ is the total variation of the argument of $h$ on $[c,d]$, $s_0:=\max_\T|\map|$, and $s_1:=\max_\T|\map^\prime|$. Let further $\{g_n\}$ be a sequence of irreducible critical points of the meromorphic approximation problem with $p=2$ to $\ct$. Then \eqref{eq:mer1}, \eqref{eq:mer2}, and \eqref{eq:mer3} hold with $w\equiv1$.
\end{thm}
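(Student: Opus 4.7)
The plan is to run the same program that yields Theorems \ref{thm:mer1} and \ref{thm:mer2}, but to exploit two simplifications and confront one new subtlety that are special to $p=2$. Since at $p=2$ the meromorphic approximant is in fact rational and the singular vector may be taken to be purely inner, the outer factor $w_n$ in \eqref{eq:sinVecDec} is identically $1$ by convention. Consequently \eqref{eq:mer1} collapses to the trivial statement (with $w\equiv 1$ and $l_n\equiv 0$), and the self-consistent fixed-point step that dominates the proof of Theorem \ref{thm:mer1} simply disappears; no iteration is required to pin down the limit of $w_n$. This also removes the appearance of $w_n^{p'/2}$ as a variable weight in the integral representation of $\ct-g_n$, so that the denominators $q_n$ of the sought rational approximants satisfy a non-Hermitian orthogonality against the \emph{fixed} measure $\dot\mes\,d\ed=h\hbar\hbar_\x\,d\ed$ (twisted by a polynomial factor coming from $q$, exactly as in \cite{BS02}).

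With this reduction in place, the substantive step is to apply Theorem \ref{thm:ortho} to obtain strong asymptotics of $q_n$. Here lies the sole new difficulty: the density $h\hbar\hbar_\x$ now vanishes not just at the endpoints $c,d$ but at the interior points $x\in\x$, with power-type exponents $2\alpha_x>0$. Theorem \ref{thm:ortho} (being a consequence of Theorems 2 and 3 of \cite{uBY3}, and stated for Dini-continuous non-vanishing weights with argument of bounded variation) cannot be applied directly to such a degenerate density; one must absorb the factors $|t-x|^{2\alpha_x}$ as perturbations of an admissible weight. This is precisely where the smallness condition $\sin(\alpha_x\pi)\in(0,\Psi_x)$ enters: the explicit bound
\[
\Psi_x = \min_{\pm}|\scf^\pm_{q^2h}(x)|\exp\left\{-\frac{4s_1[V_h+2m\pi]}{1-s_0}\right\}
\]
balances the size of the contour-jump introduced by each $\hbar_\x$-factor (measured through $\sin(\alpha_x\pi)$) against the argument variation of $q^2h$ and the geometric constants $s_0,s_1$ associated with $\map$; this quantifies the regime in which a Banach fixed-point / perturbation argument still produces Szegő-type asymptotics for $q_n$.

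Once strong asymptotics for $q_n$ are secured on and near $\T$, the remaining statements \eqref{eq:mer2} and \eqref{eq:mer3} for $\{g_n\}$ follow by the same machinery as in Theorem \ref{thm:mer2}. The Blaschke product $b_n=q_n/\widetilde q_n$ inherits \eqref{eq:mer2} from the ratio of the asymptotic of $q_n$ and its reciprocal, with the Szegő functions $\szc_n$ for the condenser $\dr\cap\dr^*$ constructed verbatim; the error estimate \eqref{eq:mer3} is obtained by computing $\ho_\ct(v_n)$ via the Cauchy representation of $\ct$ and substituting the asymptotic of $q_n$, with $\gmc$ of \eqref{eq:m} now evaluated against $|b^2\dot\mes|$ (since $w\equiv 1$). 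No additional normalization or extra Paley–Wiener-type argument is needed because $w_n\equiv 1$ removes the self-consistency between the outer factor and the weight.

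The chief obstacle is therefore neither the error analysis nor the passage from $q_n$ to $b_n$, $\sigma_n$ and $\ct-g_n$, but the verification that Theorem \ref{thm:ortho} can be invoked for the degenerate weight $h\hbar\hbar_\x$ under the quantitative hypothesis $\sin(\alpha_x\pi)<\Psi_x$. This is where the explicit form of $\Psi_x$ is forced by the proof, and this is the only place where the extended class \eqref{eq:dotmes2} genuinely costs work beyond the arguments already deployed for Theorems \ref{thm:mer1}--\ref{thm:mer2}.
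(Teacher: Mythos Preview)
Your strategic outline is right: at $p=2$ one has $w_n\equiv 1$, Theorem~\ref{thm:mer1} becomes trivial, and the proof of Theorem~\ref{thm:mer2} goes through verbatim once Theorem~\ref{thm:ortho} delivers the asymptotics \eqref{eq:nhop} for $u_{n-m}$ with $\nu_n$ as in \eqref{eq:dnun}. You are also right that the only real work is checking that Theorem~\ref{thm:ortho} applies.

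But you misidentify what that check consists of. Theorem~\ref{thm:ortho} is \emph{already} stated for densities containing the factor $\hbar_\x$; look at the display preceding \eqref{eq:Upsilon}. There is nothing to ``absorb as a perturbation'' and no fixed-point argument to run in this paper. The hypothesis you must verify is \eqref{eq:Upsilon}, namely $\sin(\alpha_x\pi)<\Upsilon_x$ where
\[
\Upsilon_x = \liminf_{n\to\infty}\min_{\pm}\bigl|(r_n\scf_{hh_n})^\pm(x)\bigr|,
\qquad h_n=qq_{n,m},\ \ r_n=r_n(\widetilde q_n;\cdot).
\]
Since $q_{n,m}\to q$ by Lemma~\ref{lem:auxS}, $\scf_{hh_n}^\pm(x)\to\scf_{q^2h}^\pm(x)$, so the task reduces to a uniform lower bound on $|r_n^\pm(x)|$, equivalently (by $r_n^+r_n^-=1$) an upper bound on $|r_n^\mp(x)|$. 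That bound is the whole content of the proof: from \eqref{eq:sums} one controls $\sum_j|\map(1/\bar\xi_{j,n})-\map(1/\xi_{j,n})|$ by $2s_1\sum_j(\pi-\ang(\xi_{j,n}))$, and the latter sum is at most $V_h+2m\pi$ by \cite[Lem.~3.2]{uBY1}. Feeding this into the estimate behind \eqref{eq:boundrnhat} (with $s_0=\max_\T|\map|$ playing the role of $s_1$ there) gives $\log|r_n^\pm|\leq 2s_1(V_h+2m\pi)/(1-s_0)$ on $[c,d]$, whence $\Upsilon_x\geq\Psi_x$. This is where the precise form of $\Psi_x$ comes from, and it is the step your outline does not contain.
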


It follows from (\ref{eq:mer2}) that each $b_n$ has exactly $m$ zeros approaching the zeros of $b$. In fact, it is possible to say more.

\begin{thm}
\label{thm:mer4}
For each $\eta\in\z(q)$ and all $n$ large enough, there exists an arrangement of $\eta_{1,n},\ldots\eta_{m(\eta),n}$, the zeros of $b_n$ approaching $\eta$, such that
\begin{equation}
\label{eq:mer5}
\eta_{k,n} = \eta + A_{k,n}^\eta \left(\frac{\rho}{\mpc(\eta)}\right)^{2(n-m)/m(\eta)} \exp\left\{\frac{2\pi ki}{m(\eta)}\right\}, \;\;\; k=1,\ldots,m(\eta),
\end{equation}
where the sequences $\{\max_k|A_{k,n}^\eta|\}$ and $\{\max_k|1/A_{k,n}^\eta|\}$ are bounded above.
\end{thm}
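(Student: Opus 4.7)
The plan is to exploit the asymptotic \eqref{eq:mer2} by upgrading it, in a neighborhood of $\eta$, to an additive two-term expansion from which the precise location of the zeros $\eta_{k,n}$ can be read off by extracting $m(\eta)$-th roots of a nondegenerate quantity of size $(\rho/\mpc(\eta))^{2(n-m)}$.

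\textbf{Step 1 (localization).} Fix $\eta\in\z(q)$ of multiplicity $m(\eta)$ and choose $\delta>0$ so small that $\overline{D}(\eta,\delta)\subset\overline\D$ contains no other element of $\z(q)\cup\supp(\mes)$. On the circle $\partial D(\eta,\delta)\subset\da\cap\das$, the asymptotic \eqref{eq:mer2} is uniform, and $\mpc$, $\szc_n$ are analytic and non-vanishing there. Since $b$ has exactly $m(\eta)$ zeros (all at $\eta$) in $D(\eta,\delta)$, Rouch\'e's theorem produces, for $n$ large, exactly $m(\eta)$ zeros $\eta_{1,n},\ldots,\eta_{m(\eta),n}$ of $b_n$ inside $D(\eta,\delta)$.

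\textbf{Step 2 (two-term expansion).} The crucial step is to refine \eqref{eq:mer2} in a neighborhood of $\eta$ to an additive expansion
\[
b_n(z)=b(z)\mpc(z)^{n-m}\szc_n(z)^{-1} + \frac{\rho^{2(n-m)}}{\mpc(z)^{n-m}}\,\szc_n(z)\,\Theta_n(z),
\]
with $|\Theta_n|$ and $|1/\Theta_n|$ uniformly bounded on a fixed neighborhood of $\eta$. The subdominant term is the natural ``reflected'' companion of the leading one with respect to the condenser $\dr\cap\dr^*$: on the unit circle it is swamped by the leading term (by a factor $\rho^{2(n-m)}$), but it becomes of the same order near the zero set of $b\mpc^{n-m}$, and hence governs the location of the zeros of $b_n$. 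This refinement is implicit in the non-Hermitian orthogonality / singular-integral machinery underlying Theorem~\ref{thm:mer2}: one retains the second saddle-point contribution rather than absorbing it into the $o(1)$ correction.

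\textbf{Step 3 (solving for the zeros).} Factor $b(z)=(z-\eta)^{m(\eta)}\phi_\eta(z)/\widetilde q(z)$ with $\phi_\eta(\eta)\neq 0$. Substituting $z=\eta_{k,n}$ into the expansion of Step~2 and clearing factors gives
\[
(\eta_{k,n}-\eta)^{m(\eta)} = -\left(\frac{\rho}{\mpc(\eta_{k,n})}\right)^{2(n-m)}\frac{\szc_n(\eta_{k,n})^{2}\widetilde q(\eta_{k,n})}{\phi_\eta(\eta_{k,n})}\,\Theta_n(\eta_{k,n}).
\]
Since $\eta_{k,n}\to\eta$ and $\szc_n$, $\widetilde q$, $\phi_\eta$, $\Theta_n$ are uniformly bounded above and away from zero near $\eta$, the right-hand side has modulus $\asymp(\rho/|\mpc(\eta)|)^{2(n-m)}$ with bounded, nondegenerate pre-factor. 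Extracting an $m(\eta)$-th root (fixing a branch) and labelling the zeros so that consecutive ones differ by the rotation $e^{2\pi i/m(\eta)}$ yields \eqref{eq:mer5}, with $\{A_{k,n}^\eta\}$ and $\{1/A_{k,n}^\eta\}$ bounded above.

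\textbf{Main obstacle.} The heart of the argument is Step~2: extracting the subdominant reflected term from the analysis behind Theorem~\ref{thm:mer2}, with an explicit uniform size bound valid even in a neighborhood of $\eta$. Once this is in place, Steps~1 and~3 are essentially routine. A viable alternative to Step~2, bypassing the additive expansion, is a moment/residue computation: for $\ell=1,\ldots,m(\eta)$, integrating $(\ct-g_n)(z)(z-\eta)^{\ell-1}$ over $\partial D(\eta,\delta)$ and using \eqref{eq:mer4} leads to a triangular system of equations for the elementary symmetric functions of $\{\eta_{k,n}-\eta\}$; controlling the intermediate symmetric functions so that only the top one of order $(\rho/\mpc(\eta))^{2(n-m)}$ survives is then the delicate point.
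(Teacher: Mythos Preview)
Your main line of attack has a genuine gap in Step~2. The two-term additive expansion
\[
b_n(z)=b(z)\mpc(z)^{n-m}\szc_n(z)^{-1} + \frac{\rho^{2(n-m)}}{\mpc(z)^{n-m}}\,\szc_n(z)\,\Theta_n(z)
\]
with $|\Theta_n|,|1/\Theta_n|$ uniformly bounded near $\eta$ is asserted but not proved, and it is \emph{not} implicit in the analysis behind Theorem~\ref{thm:mer2}. What that analysis actually yields (see \eqref{eq:zz3}) is a one-term multiplicative asymptotic $b_n/b=(1+o(1))(\text{explicit factor})$ locally uniformly in $\dr$, hence in particular near $\eta$; no second ``reflected'' term of the claimed size is ever isolated. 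Producing such a term with two-sided control on $\Theta_n$ would be an independent result of nontrivial difficulty, and without it Step~3 has nothing to act on.

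The paper avoids this entirely by working not with $b_n$ but with the error formula. From \eqref{eq:aa2} one has $(\ct-g_n)=\dfrac{q_{n,m}}{q}\,\dfrac{\gamma_{n-m}y_n}{b_n^2w_n}$ with $y_n=(1+o(1))/\sr$ uniformly near $\eta$ by \eqref{eq:asympyn}. Since $q$ vanishes to order $m(\eta)$ at $\eta$, this gives the \emph{exact} identities
\[
y_n^{(k)}(\eta)=(Y_nq_{n,m})^{(k)}(\eta),\qquad k=0,\dots,m(\eta)-1,\qquad Y_n:=\frac{u_{n-m}^2w_np}{\gamma_{n-m}\widetilde q_n^2}.
\]
Here $|Y_n(\eta)|\asymp|\mpc(\eta)/\rho|^{2(n-m)}$ by \eqref{eq:pp5}, while $Y_n$ is (up to the uniformly convergent factor $w_n$) a rational function of degree $O(n)$ with no zeros or poles in a fixed disk about $\eta$; hence Lemma~\ref{lem:auxR} bounds $|Y_n^{(j)}(\eta)/Y_n(\eta)|$ polynomially in $n$. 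Setting $\chi_n^{m(\eta)}:=-Y_n(\eta)/y_n(\eta)$, the case $k=0$ gives $\chi_n^{m(\eta)}q_{n,m}(\eta)=-1$, and an induction on $k$ (geometric decay of $1/\chi_n$ beats polynomial growth of $Y_n^{(j)}/Y_n$) yields $\chi_n^{m(\eta)-k}q_{n,m}^{(k)}(\eta)=o(1)$ for $k\ge1$. Thus
\[
\prod_{k=1}^{m(\eta)}\bigl(z+\chi_n(\eta-\eta_{k,n})\bigr)=z^{m(\eta)}-1+o(1),
\]
and \eqref{eq:mer5} follows at once. Your ``viable alternative'' is in fact close to this: the triangular system you describe is essentially the system $y_n^{(k)}(\eta)=(Y_nq_{n,m})^{(k)}(\eta)$, and the ``delicate point'' of controlling the intermediate coefficients is exactly what the inductive use of Lemma~\ref{lem:auxR} accomplishes.
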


This theorem essentially says that each pole $\eta$ of $\ct$ attracts exactly $m(\eta)$ poles of $g_n$, the latter converge geometrically fast and are asymptotically distributed as the roots of unity of order $m(\eta)$. The proof of this theorem is an adaptation of the technique developed in \cite{GS04} for classical Pad\'e approximants to Cauchy transforms of analytic densities. As one can see from the next section, similar results hold not only for classical but more generally for multipoint Pad\'e approximants to Cauchy transforms of less regular measures.

\section{Multipoint Pad\'e Approximation}
\label{sec:pade}

Let $\ct$ be given by (\ref{eq:ct}). Classically, diagonal (multipoint) Pad\'e approximants to $\ct$ are rational functions of type $(n,n)$ that interpolate $\ct$ at a prescribed system of $2n+1$ points. However, when the approximated function is of the form \eqref{eq:ct}, it is customary to place at least one interpolation condition at infinity. More precisely, let $\E=\{E_n\}$ be a sequence of sets each consisting of $2n$ not necessarily distinct nor finite points in $\da$ ({\it interpolation scheme}), and let $v_n$ be the monic polynomial with zeros at the finite points of $E_n$.

\begin{df}[Pad\'e Approximants]
Given $\ct$ of type (\ref{eq:ct}) and an interpolation scheme $\E$, the n-th diagonal Pad\'e approximant to $\ct$ associated with $\E$ is the unique rational function $\Pi_n=p_n/q_n$ satisfying:
\begin{itemize}
\item $\deg p_n\leq n$, $\deg q_n\leq n$, and $q_n\not\equiv0$;
\item $\left(q_n(z)\ct(z)-p_n(z)\right)/v_n(z)$ is analytic in $\da$;
\item $\left(q_n(z)\ct(z)-p_n(z)\right)/v_n(z)=O\left(1/z^{n+1}\right)$ as $z\to\infty$.
\end{itemize}
\end{df}

A Pad\'e approximant always exists since the conditions for $p_n$ and $q_n$ amount to solving a system of $2n+1$ homogeneous linear equations with $2n+2$ unknown coefficients, no solution of which can be such that $q_n\equiv0$ (we may thus assume that $q_n$ is monic); note the required interpolation at infinity is entailed by the last condition and therefore $\Pi_n$ is, in fact, of type $(n-1,n)$.

By the very definition, the behavior of $\Pi_n$ depends on the choice of the interpolation scheme. We define {\it the support} of $\E=\{E_n\}$ as $\supp(\E):=\cap_{n\in\N}\overline{\cup_{k\geq n}E_k}$. Hereafter, the counting measure of a finite set is a probability measure that has equal mass at each point counting multiplicities and the weak$^*$  topology is understood with respect to the duality between complex measures and continuous functions with compact support in $\overline\C$.

\begin{df}[Admissibility]
An interpolation scheme $\E$ is called admissible if
\begin{itemize}
\item there exist rearrangements $\Delta_n$ of $E_n$ such that the sums $\sum_{e\in E_n}| \map(\bar e)-\map(\Delta_n(e))|$ are uniformly bounded when $n\to\infty$;
\item $\supp(\E)\subset\da$ and the probability counting measures of points in $E_n$ converge weak$^*$ to some Borel measure with finite Green energy\footnote{For information on the notions of potential theory we refer the reader to the monographs \cite{Ransford,SaffTotik}.}  relative to $\dr$.
\end{itemize}
\end{df}

Then the following result holds.

\begin{thm}
\label{thm:pade1}
Let $\{\Pi_n\}$ be a sequence of diagonal Pad\'e approximants associated with an admissible interpolation scheme $\E=\{E_n\}$ to $\ct$ given by \eqref{eq:ct} and \eqref{eq:dotmes2} with 
\begin{equation}
\label{eq:upsilon}
\alpha_x\pi\in(0,\arcsin\Upsilon_x), \quad \Upsilon_x := \liminf_{n\to\infty} \min_{\pm} \left\{ |(r_n\scf_h)^\pm(x)| \right\},
\end{equation}
for any $x\in\x$, where $r_n:=r_{2n}(v_n;\cdot)$. Then
\begin{equation}
\label{eq:pade1}
(\ct-\Pi_n)\sr = [2\gm_{\dot\mes}+o(1)]~\szf_{\dot\mes}^2~\frac{r_n}{r^2}
\end{equation}
locally uniformly in $\da$, where $r:=r_m(q;\cdot)$. When $(p/q)\equiv0$ it is not necessary to assume boundedness of the variation of the argument of $h$.
\end{thm}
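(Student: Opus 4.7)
The plan is to reduce Theorem~\ref{thm:pade1} to strong Szeg\H{o}-type asymptotics for the denominator polynomials $q_n$ of the Pad\'e approximants and then substitute those asymptotics into an integral representation of the error.

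First, I would derive the classical error identity. Writing $\Pi_n=p_n/q_n$ and exploiting the three defining properties of $\Pi_n$ together with the explicit form (\ref{eq:ct}) of $\ct$, a standard computation (mirroring the classical derivation for multipoint Pad\'e approximants to Markov functions but now with the extra rational piece $p/q$) yields the non-Hermitian orthogonality relations
\[
\int t^j q_n(t)q(t)\,\frac{d\mes(t)}{v_n(t)} = 0, \quad j=0,\ldots,n-1,
\]
together with the error representation
\[
(\ct-\Pi_n)(z) \;=\; \frac{1}{q_n(z)q(z)}\int\frac{q_n(t)q(t)}{z-t}\,\frac{v_n(z)}{v_n(t)}\,d\mes(t), \qquad z\in\da.
\]
I would verify this by checking that the right-hand side is analytic in $\da$, vanishes to the correct order at infinity, and interpolates $\ct$ at the zeros of $v_n$, which together with the uniqueness clause of the definition pins it down.

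Next, I would invoke Theorem~\ref{thm:ortho}, which supplies strong asymptotics for $q_n q$ viewed as orthogonal polynomials with respect to the varying weight $d\mes/v_n$, of the schematic form $q_n(z)q(z)=[1+o(1)]\,(\szf_{\dot\mes}(z))^{-1}\map^{-n}(z)$ times explicit Szeg\H{o} factors built from $v_n$ via (\ref{eq:rn})--(\ref{eq:szegopoly}), with corresponding boundary behaviour on $[c,d]$ controlled by $\szf_{\dot\mes}^\pm$ and $\scf_h^\pm$. The zero-avoidance bound (\ref{eq:upsilon}) is what ensures Theorem~\ref{thm:ortho} applies: it is the exact threshold under which the zeros of $q_n$ do not cluster at the exceptional points $\x$ where $\hbar_\x$ vanishes. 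Plugging these asymptotics into the error representation and collapsing the remaining line integral onto $[c,d]$ by Plemelj--Sokhotski, the traces combine through $\gm_{\dot\mes}\szf_{\dot\mes}^+\szf_{\dot\mes}^-=\dot\mes$ from (\ref{eq:szegodecomp}) and $r_n^+r_n^-=1$ from (\ref{eq:rkpm}); the factor $1/\sr$ is produced by the density $d\ed/dt=1/(\pi|\sr^+|)$, the polynomial pieces $q$ and $v_n$ regroup through (\ref{eq:rn})--(\ref{eq:szegopoly}) into $r=r_m(q;\cdot)$ and $r_n=r_{2n}(v_n;\cdot)$, and after cancellation of the $\map^{-n}$ factors one recovers exactly (\ref{eq:pade1}).

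The admissibility of $\E$ enters in two distinct ways. The near-conjugate-symmetry condition (the uniform bound on $\sum|\map(\bar e)-\map(\Delta_n(e))|$) is what makes the Szeg\H{o} asymptotics associated with the weight $d\mes/v_n$ have a real leading coefficient free of oscillating phase factors, so that the $o(1)$ in (\ref{eq:pade1}) is genuine. The weak-$*$ convergence of the counting measures with finite Green energy is used to show that $\{r_n\}$ forms a normal family away from $\supp(\E)$, making the convergence locally uniform on $\da$. The case $(p/q)\equiv 0$ corresponds to $q\equiv 1$, $r\equiv 1$, in which the bounded-variation hypothesis on $\arg h$ is not needed because the orthogonality has no additional polynomial factor and Theorem~\ref{thm:ortho} applies under its weaker version.

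The main obstacle will be the control of the zeros of $q_n$. A priori some of these zeros could approach $[c,d]$ or the exceptional set $\x$ and produce spurious poles of $\Pi_n$ in $\da$, destroying locally uniform convergence. Showing that the precise bound (\ref{eq:upsilon}) is sharp enough to confine all zeros of $q_n$ to a shrinking neighbourhood of $[c,d]$ governed by the Green equilibrium distribution (\ref{eq:ged}), so that (\ref{eq:pade1}) holds uniformly on compact subsets of all of $\da$ (and not merely off a small exceptional set), is the technical heart of the argument and is where the hypothesis on $\alpha_x$ is used in full strength.
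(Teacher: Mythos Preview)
Your proposal has a genuine gap: you treat $q_nq$ as an orthogonal polynomial to which Theorem~\ref{thm:ortho} applies, but the degree--relation count does not match. The correct orthogonality is
\[
\int t^j q(t)q_n(t)\,\frac{d\mes(t)}{v_n(t)}=0,\qquad j=0,\ldots,n-m-1,
\]
i.e.\ only $n-m$ relations (not $n$), while $\deg(q_nq)=n+m$. Theorem~\ref{thm:ortho} requires the polynomial to have degree equal to the number of orthogonality relations, so it cannot be invoked for $q_nq$, nor for $q_n$ alone with weight $q\,d\mes/v_n$. More fundamentally, your picture that ``all zeros of $q_n$ are confined to a shrinking neighbourhood of $[c,d]$'' is false: exactly $m$ zeros of $q_n$ escape and converge to the zeros of $q$, which lie in $\da$ away from $[c,d]$. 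This is precisely why $q_nq$ fails to have Szeg\H{o}-type asymptotics of the shape you wrote.

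The paper closes this gap by first establishing, via an external result, the factorization $q_n=u_{n-m}q_{n,m}$ with $q_{n,m}\to q$ locally uniformly off $\z(q)$. Only then does $u_{n-m}$, of degree $n-m$, satisfy exactly $n-m$ orthogonality relations with respect to the varying weight $d\nu_n=(q_{n+m,m}q/v_{n+m})\,d\mes$, and Theorem~\ref{thm:ortho} applies to $u_{n-m}$ and its second-kind function $R_{n-m}$. Substituting \eqref{eq:nhop} into the error formula $(\ct-\Pi_n)=v_nR_{n-m}/(u_{n-m}q_{n,m}q)$ and rewriting with \eqref{eq:szegopoly} then gives \eqref{eq:pade1} by purely algebraic manipulation of Szeg\H{o} factors; no Plemelj--Sokhotski step is needed. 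You also misplace the role of the bounded-variation hypothesis on $\arg h$: Theorem~\ref{thm:ortho} itself does not need it. It is needed to prove the factorization $q_n=u_{n-m}q_{n,m}$ (through the cited result on zero attraction to $\z(q)$), which is why it becomes superfluous when $p/q\equiv0$ and there is nothing to factor out.
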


We would like to point out that $\Upsilon_x$ is, in fact, continuous function of $x$ on $[c,d]$ such that $\Upsilon_c=\Upsilon_d=1$ and $\min_{x\in[c,d]}\Upsilon_x>0$. The latter is true since the  functions $\scf_h^\pm$ are non-vanishing and continuous on $[c,d]$. Moreover, it will be shown in the proof of Theorem \ref{thm:ortho} that the admissibility of $\E$ implies uniform boundedness of $|r_n^\pm|$ and hence their uniform boundedness away from zero by (\ref{eq:rkpm}). It is also easy to check that when the sets $E_n$ are conjugate-symmetric and $h$ is a positive function, it holds that $\Upsilon_x\equiv1$.

Concerning the behavior of $\Pi_n$ near polar singularities of $\ct$, i.e. near $\z(q)$, the following theorem asserts the same ``roots of unity'' behavior as in Theorem \ref{thm:mer4} and is a generalization of \cite[Thm. 3]{GS04} for the case of multipoint Pad\'e approximants and less regular measures.

\begin{thm}
\label{thm:pade2}
Under the conditions of Theorem \ref{thm:pade1} let $q_n$ be the denominators of $\Pi_n$. Then
\begin{equation}
\label{eq:pade2}
q_n = u_{n-m}q_{n,m} \quad \mbox{and} \quad q_{n,m}=(1+o(1))q,
\end{equation}
where $\deg(u_{n-m})=n-m$, $\deg(q_{n,m})=m$, the polynomials $u_{n-m}$ have no zeros on any closed set in $\dr$ for all $n$ large enough, and $o(1)$ holds locally uniformly in $\overline\C\setminus \z(q)$. Moreover, for each $\eta\in\z(q)$ with multiplicity $m(\eta)$ and all $n$ large enough there exists an arrangement of $\eta_{1,n}, \ldots, \eta_{m(\eta),n} \in\z(q_{n,m})$ such that
\begin{equation}
\label{eq:pade3}
\eta_{k,n} = \eta + A_{k,n}^\eta\left[r_n(\eta)\right]^{1/m(\eta)}\exp\left\{\frac{2\pi ki}{m(\eta)}\right\}, \quad k=1,\ldots,m(\eta),
\end{equation}
where the sequence $\{\max_k|A_{k,n}^\eta|\}$ is bounded above.
\end{thm}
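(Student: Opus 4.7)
My plan is to combine the global asymptotic formula of Theorem \ref{thm:pade1} with a local analysis near each pole of $\ct$, adapting the technique of Gonchar and Suetin \cite{GS04} from the classical to the multipoint Pad\'e setting.

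First I would prove the qualitative statements in \eqref{eq:pade2}. From \eqref{eq:pade1} one reads that $\ct-\Pi_n\to 0$ locally uniformly on $\da$, since the factor $r_n/r^2$ tends to zero on compacts of $\da$ (indeed $|r_n|\leq 1$ on $\dr$ by \eqref{eq:rkpm} and the maximum principle, and it must go to zero on compacts of $\da$ by the very meaning of the local uniform asymptotic). By Hurwitz's theorem the poles of $\Pi_n$ can then accumulate only on $\z(q)\cup[c,d]$, and the fact that $\ct$ has a pole of exact order $m(\eta)$ at each $\eta\in\z(q)$ forces $q_n$ to have exactly $m(\eta)$ zeros (with multiplicities) in any sufficiently small disk around $\eta$ for all large $n$. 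Collecting these zeros in a monic polynomial $q_{n,m}$ of degree $m$ and setting $u_{n-m}:=q_n/q_{n,m}$ gives \eqref{eq:pade2}; the zeros of $u_{n-m}$ accumulate on $[c,d]$, so $u_{n-m}$ has no zeros on any closed subset of $\dr$ for large $n$. The convergence $q_{n,m}=(1+o(1))q$ locally uniformly off $\z(q)$ then follows from monicity, since each $\eta_{j,n}$ converges to some $\eta\in\z(q)$.

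For the fine asymptotics \eqref{eq:pade3}, fix $\eta\in\z(q)$ of multiplicity $k:=m(\eta)$. Writing $\ct-\Pi_n$ near $\eta$ as
\[
(\ct-\Pi_n)(z)=\frac{p(z)\,u_{n-m}(z)\,q_{n,m}(z)-p_n(z)\,q(z)}{q(z)\,u_{n-m}(z)\,q_{n,m}(z)}+H(z),
\]
where $H(z):=\int d\mes(t)/(z-t)$ is holomorphic at $\eta$ and independent of $n$, I would multiply by $q(z)\,q_{n,m}(z)\,u_{n-m}(z)$, substitute the right-hand side of \eqref{eq:pade1} multiplied by the same factor, and use \eqref{eq:rn} to note that $r(z)^2=r_m(q;z)^2$ has a zero of order exactly $2k$ at $\eta$, so that $q(z)/r(z)^2$ has a pole of order exactly $k$ at $\eta$ with a nonzero holomorphic leading coefficient. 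After matching the principal parts and the Taylor coefficients of order $<k$ at $\eta$, one arrives at
\[
\prod_{j=1}^{k}(z-\eta_{j,n})=(z-\eta)^{k}-C_n\,r_n(\eta)\,[1+o(1)]+O\bigl((z-\eta)^{k+1}\bigr)
\]
in a small disk around $\eta$, with $C_n$ bounded above. Setting the left-hand side to zero and extracting $k$-th roots then yields \eqref{eq:pade3} with $|A^\eta_{j,n}|$ bounded above.

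The principal obstacle will be the Taylor bookkeeping in this last step: one has to show that the $k-1$ subleading Taylor coefficients of the numerator $p\,u_{n-m}\,q_{n,m}-p_n q$ at $\eta$ are $o(r_n(\eta))$, so that the root-of-unity pattern is not perturbed, and that $C_n$ really captures the leading behavior rather than degenerating. This requires a $k$-th order Taylor analysis of $\ct-\Pi_n$ at $\eta$ together with careful use of the uniform bounds on $\szf_{\dot\mes}$, $\sr$, and $u_{n-m}$ near $\eta$ that are implicit in the proof of Theorem \ref{thm:pade1} and in the admissibility of $\E$. This is the natural multipoint generalization, with the less regular measures allowed by \eqref{eq:dotmes2}, of the residue estimate in \cite{GS04}.
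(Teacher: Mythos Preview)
Your outline for \eqref{eq:pade2} is in the right spirit, though the paper actually establishes the factorization $q_n=u_{n-m}q_{n,m}$ \emph{inside} the proof of Theorem~\ref{thm:pade1} (as \eqref{eq:qnm}, via \cite[Thm.~2.4]{BY09}) rather than deducing it from \eqref{eq:pade1}; so that part is already available and need not be re-derived.

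The real gap is in your plan for \eqref{eq:pade3}. First, you propose to substitute \eqref{eq:pade1} and Taylor-expand at $\eta$, but \eqref{eq:pade1} is an asymptotic valid only locally uniformly in $\da$, which \emph{excludes} $\eta$; the $o(1)$ there gives no control on derivatives at $\eta$, and the right-hand side $r_n/r^2$ has a pole of order $m(\eta)$ at $\eta$. The paper avoids this by working with the exact identity coming from the error formula \eqref{eq:errpade}/\eqref{eq:pp1}: writing $y_n:=u_{n-m}R_{n-m}/\gamma_{n-m}$ and $Y_n:=u_{n-m}^2p/(\gamma_{n-m}v_n)$, one gets the \emph{exact} equalities $y_n^{(j)}(\eta)=(Y_nq_{n,m})^{(j)}(\eta)$ for $j=0,\ldots,m(\eta)-1$, with $y_n$ bounded (indeed $y_n\sr=1+o(1)$ throughout $\dr$ by Theorem~\ref{thm:ortho}) and $Y_n(\eta)\sim \mbox{const}/r_n(\eta)$.

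Second, your statement of what must be shown for the subleading coefficients is wrong: requiring them to be $o(r_n(\eta))$ is far too weak. What is needed is that, with $\chi_n^{m(\eta)}:=-Y_n(\eta)/y_n(\eta)$, one has $\chi_n^{m(\eta)-j}q_{n,m}^{(j)}(\eta)=o(1)$ for each $j=1,\ldots,m(\eta)-1$, i.e., the $j$-th coefficient must be $o(|r_n(\eta)|^{(m(\eta)-j)/m(\eta)})$. The paper obtains this by an induction whose engine is Lemma~\ref{lem:auxR}: since $Y_n$ is a rational function of degree $O(n)$ with no zeros or poles near $\eta$, one has $|Y_n^{(j)}(\eta)/Y_n(\eta)|=O(n^j)$, which is negligible against the geometric growth of $\chi_n^j$. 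Without this polynomial-versus-geometric comparison the induction does not close, and your proposal does not supply it. (Incidentally, your displayed identity $\prod_{j=1}^k(z-\eta_{j,n})=(z-\eta)^k-C_nr_n(\eta)[1+o(1)]+O((z-\eta)^{k+1})$ cannot be right as written: the left-hand side is a polynomial of degree exactly $k$.)
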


\section{Non-Hermitian Orthogonal Polynomials}
\label{sec:ortho}

In this section we describe the asymptotic behavior of non-Hermitian orthogonal polynomials with varying weights on $[c,d]$. In what follows, we assume that $\{\nu_n\}$ is a sequence of complex measures on $[c,d]$ such that
\[
d\nu_n = \dot\nu_nd\ed, \quad \dot\nu_n = hh_n\hbar\hbar_\x/v_{n+m}, \quad m\in\Z_+,
\]
where $h$ is a non-vanishing Dini-continuous function on $[c,d]$, $\{h_n\}$ is a normal family of non-vanishing functions in some neighborhood of $[c,d]$ none of which limit points can vanish in this neighborhood, $v_n$, $\deg(v_n)\leq2n$, are monic polynomials with zeros at finite points of an admissible interpolation scheme, $\hbar$ and $\hbar_\x$ are as in the introduction with 
\begin{equation}
\label{eq:Upsilon}
\alpha_x\pi\in(0,\arcsin\Upsilon_x), \quad \Upsilon_x := \liminf_{n\to\infty} \min_{\pm} \left\{ |(r_{n+m}\scf_{hh_n})^\pm(x)| \right\},
\end{equation}
for each $x\in\x$, where $r_k:=r_{2k}(v_k;\cdot)$. Observe that we do not require $h$ to have argument of bounded variation. Then the following theorem holds.

\begin{thm}
\label{thm:ortho}
Let $\{\nu_n\}$ be as described and $\{u_n\}$ be a sequence of polynomials satisfying
\[
\int t^ju_n(t)d\nu_n(t) = 0, \quad j=0,\ldots,n-1,
\]
and $\{R_n\}$ be the sequence of corresponding functions of the second kind, i.e.,
\begin{equation}
\label{eq:secondkind}
R_n(z) := \int\frac{u_n(t)}{z-t}d\nu_n(t) = \frac{1}{u_n(z)}\int\frac{u_n^2(t)}{z-t}d\nu_n(t).
\end{equation}
Then, for all $n$ large enough, the polynomials $u_n$ have exact degree $n$ and therefore can be normalized to be monic. Under such a normalization it holds that
\begin{equation}
\label{eq:nhop}
\left\{
\begin{array}{lll}
u_n  &=& [1+o(1)]/\szf_n \\
R_n\sr &=& [1+o(1)]\gamma_n\szf_n
\end{array}
\right. \;\;\; \mbox{locally uniformly in} \;\; \dr,
\end{equation}
where $\szf_n := \szf_{\dot\nu_n}(\cp\map)^n$, $\gamma_n:=2\cp^{-2n}\gm_{\dot\nu_n}$, and $\cp$ and $\sr$ were defined in \eqref{eq:sr}.
\end{thm}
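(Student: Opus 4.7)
The plan is to deduce Theorem \ref{thm:ortho} from the strong asymptotic results of \cite{uBY3} (Theorems 2 and 3 there), as indicated in the introduction. Thus the proof reduces to verifying the hypotheses of those theorems for the varying weights $\dot\nu_n = hh_n\hbar\hbar_\x/v_{n+m}$ and translating their conclusions into the form (\ref{eq:nhop}).

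First I would factor $\dot\nu_n$ so that the Szeg\H{o} function separates into individually controlled pieces. By the multiplicativity property of $\szf_\cdot$ and formula (\ref{eq:szegopoly}), one obtains
\[
\szf_{\dot\nu_n} = \szf_{hh_n}\,\szf_\hbar\,\szf_{\hbar_\x}\,\szf_{v_{n+m}}^{-1}, \qquad \szf_{v_{n+m}}^2 = \frac{v_{n+m}\map^{2(n+m)}}{\gm_{v_{n+m}}\,r_{n+m}},
\]
which, upon substitution into $\szf_n = \szf_{\dot\nu_n}(\cp\map)^n$, exhibits the full $n$-dependence of the asymptotic skeleton in a form directly amenable to the analysis of \cite{uBY3}. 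For hypothesis verification, admissibility of the interpolation scheme combined with (\ref{eq:rkpm}) yields uniform boundedness of $|r_{n+m}^\pm|$ above and away from zero on $[c,d]$, as well as the control on boundary-value arguments that replaces any bounded-variation assumption on $h$ in this varying-weight context. The interior-singularity bound (\ref{eq:Upsilon}) is precisely the solvability threshold identified in \cite{uBY3}: the inequality $\sin(\alpha_x\pi) < \Upsilon_x$ is exactly what allows the local analysis near each $x \in \x$ to go through. Granted these, the conclusions of \cite{uBY3} transfer directly to yield both lines of (\ref{eq:nhop}), since the analysis there treats $u_n$ and its function of the second kind $R_n$ symmetrically via (\ref{eq:secondkind}).

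The exact-degree assertion follows \emph{a posteriori}: the function $1/\szf_n$ is meromorphic in $\dr$ with a pole of exact order $n$ at infinity (indeed $\szf_{\dot\nu_n}(\infty)=1$ and $(\cp\map)(z) = 1/z + O(1/z^2)$ near $\infty$), so the uniform convergence $u_n\szf_n \to 1$ there forces $\deg(u_n) = n$ for all $n$ large enough, justifying monic normalization. The main obstacle in the whole argument is entirely shouldered by \cite{uBY3}: the construction and solvability of a local model at each interior Jacobi-type singularity $x \in \x$, whose global invertibility requires exactly the threshold $\arcsin\Upsilon_x$. Once that external input is accepted, the present theorem becomes largely a matter of bookkeeping within the Szeg\H{o} decomposition above.
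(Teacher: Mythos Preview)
Your overall strategy---reduce to \cite[Thm.~3]{uBY3} and translate---is exactly the paper's approach. But you have skipped over what is in fact the only substantive content of the reduction. The hypotheses of \cite[Thm.~3]{uBY3} on the varying part require three things about the functions $r_n=r_{2n}(v_n;\cdot)$: (i) $r_n\to0$ locally uniformly in $\dr$; (ii) $|r_n^\pm|$ are uniformly bounded on $[c,d]$; and (iii) the moduli of continuity of $|r_n^\pm\circ\map^{-1}|$ on $\T$ are dominated by a single fixed modulus. Your sentence ``admissibility combined with \eqref{eq:rkpm} yields uniform boundedness of $|r_{n+m}^\pm|$'' covers only (ii), and even there it is not immediate: the admissibility condition gives a bound on $\sum_{e\in E_n}|\map(\bar e)-\map(\Delta_n(e))|$, and one has to convert this into a uniform bound on the outer ratio $\hat r_n/\hat b_n$ (where $\hat r_n=r_n\circ\map^{-1}$ and $\hat b_n$ is the conjugate-symmetric Blaschke product with the same zeros) by writing $\log|\hat r_n/\hat b_n|$ on $\T$ as a sum of logarithms and estimating each term. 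Property (i) then follows from $|\hat r_n|\le s_3|\hat b_n|\to0$. Property (iii)---the equicontinuity of boundary moduli---you do not mention at all, and it requires a separate argument bounding both $\log|(\hat r_n/\hat b_n)(\tau_2)/(\hat r_n/\hat b_n)(\tau_1)|$ and the corresponding argument increment by a constant times $|\tau_1-\tau_2|$, again using the admissibility sums. Without (iii) you cannot invoke \cite[Thm.~3]{uBY3}.

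Two smaller omissions: you do not address the mismatch between $v_{n+m}$ (degree $\le 2n+2m$) and the $m=0$ format of \cite{uBY3}; the paper handles this by peeling off $2m$ convergent zeros and absorbing them into $h_n$. And the normality condition on $\{h_n\}$ must be checked to deliver the uniform Dini-type control that \cite{uBY3} asks for; this is easy but should be stated. Your Szeg\H{o}-function bookkeeping and the \emph{a posteriori} degree argument are fine, but they are the trivial part; the work is entirely in (i)--(iii).
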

\begin{proof} This theorem is an adaptation of \cite[Thm. 3]{uBY3}. To see this we need several observations. Firstly, the orthogonality relations in \cite[Thm. 3]{uBY3} are considered on Jordan arcs connecting $-1$ and $1$, of which the interval $[-1,1]$ is a particular case. The current setting can be easily deduced by applying a linear transformation $l(x)=[(d-c)x+d+c]/2$.

Secondly, $\{h_n\}$ is taken in \cite[Thm. 3]{uBY3} to be a family of Dini-continuous non-vanishing functions on $[c,d]$ such that any sequence in this family contains a uniformly convergent subsequence to a non-vanishing function and the moduli of continuity of $h_n$ are bounded by the same fixed modulus of continuity. Clearly, the normality of $\{h_n\}$ yields that all these restrictions are met in the present case.

Thirdly, only the case $m=0$ is considered in \cite[Thm. 3]{uBY3}. However, the general case we are dealing with is no different. Indeed, choose $2m$ zeros of each polynomial $v_{n+m}$ that converge to some fixed point in $\dr$ (recall that the counting measures of zeros of $v_n$ converge in the weak$^*$ sense) and pay the polynomial, say $p_{2m,n}$, vanishing at these points to $h_n$. Then $\{h_n/p_{2m,n}\}$ is again, a normal family of holomorphic functions with the required properties and the new polynomial factor $v_{n+m}/p_{2m}$ of $\dot\nu_n$ has degree no greater than $2n$.

Finally, in order to appeal to \cite[Thm. 3]{uBY3}, we need to show that the functions $r_n=r_{2n}(v_n;\cdot)$ are such that $r_n=o(1)$ locally uniformly in $\dr$, $|r^\pm_n|=O(1)$ on $[c,d]$, and the moduli of continuity of $|r_n^\pm\circ\map^{-1}|$ are bounded by the same fixed modulus of continuity\footnote{Observe that $\map^{-1}$ is just the Joukovski transformation.}. To do so, consider
\[
\hat r_n (z) := r_n(\map^{-1}(z)) = \prod_{c\in \hat E_n}\frac{z-c}{1-cz}, \quad \hat E_n:=\{\map(e):~e\in E_n\}, \quad z\in\D.
\]
Observe that the sets $E_n$ lie at fixed positive distance from $[c,d]$ by the assumption $\supp(\E)\subset\da$ and therefore there exists $s_1<1$ such that $\hat E_n\subset\D_{s_1}$ for all $n$. Thus, the Blaschke products
\[
\hat b_n(z) := \prod_{c\in \hat E_n}\frac{z-c}{1-\bar cz} = o(1)
\]
locally uniformly in $\D$ \cite[Thm. 2.2.1]{Garnett}. Further,
\[
|\map(z_1)-\map(z_2)| \leq\max_{z\in K}|\map^\prime(z)| |z_1-z_2|, \quad z_1,z_2\in K := \map^{-1}(\overline\D_{s_1}),
\]
and we get from the admissibility of $\{E_n\}$ that
\[
\sum_{c\in\hat E_n}|\bar c-\hat\Delta_n(c)| \leq s_2, \quad \hat \Delta_n(c) := \map\left(\Delta_n\left(\map^{-1}(c)\right)\right),
\]
for all $n$ and some positive constant $s_2$. Consider now the functions
\[
(\hat r_n/\hat b_n)(z) = \prod_{c\in \hat E_n}\frac{1-\bar cz}{1-cz} = \prod_{c\in \hat E_n}\frac{1-\bar cz}{1-\hat\Delta_n(c)z}, \quad z\in\overline\D.
\]
Clearly, this is a sequence of outer functions in $\D$. Moreover,
\[
\log|(\hat r_n/\hat b_n)(\tau)| = \sum_{c\in\hat E_n}\log\left|1+\frac{(\hat\Delta_n(c)-\bar c)\tau}{1-\hat\Delta_n(c)\tau}\right| \leq \sum_{c\in\hat E_n}\frac{\left|\hat\Delta_n(c)-\bar c\right|}{1-|\hat\Delta_n(c)|} \leq \frac{s_2}{1-s_1}
\]
for $\tau\in\T$. Thus, we have that
\[
|\hat r_n| \leq  s_3|\hat b_n| = o(1), \quad s_3:=\exp\{s_2/(1-s_1)\},
\]
locally uniformly in $\D$ and
\begin{equation}
\label{eq:boundrnhat}
|\hat r_n| = |\hat r_n/\hat b_n| \leq s_3 \quad \mbox{on} \quad \T.
\end{equation}
Therefore, the corresponding properties of $r_n$ and $|r_n^\pm|$ follow.

Next, we show that $|r_n^\pm\circ\map^{-1}|$ have moduli of continuity majorized by the same function. As $|\hat b_n|\equiv1$ on $\T$, it is enough to consider $|\hat r_n/\hat b_n|$. Let $\tau_1,\tau_2\in\T$. Then
\[
\log\left|\frac{(\hat r_n/\hat b_n)(\tau_2)}{(\hat r_n/\hat b_n)(\tau_1)}\right| = \sum_{c\in\hat E_n}\log \left|1 + \frac{(\tau_1-\tau_2)(\bar c-\hat\Delta_n(c))}{(1-\bar c\tau_1)(1-\hat\Delta_n(c)\tau_2)}\right| \leq \frac{s_2|\tau_1-\tau_2|}{(1-s_1)^2}.
\]
Therefore, we have with $s_4:=s_2/(1-s_1)^2$ that
\[
\exp\{-s_4|\tau_1-\tau_2|\} \leq \left|\frac{(\hat r_n/\hat b_n)(\bar\tau_1)}{(\hat r_n/\hat b_n)(\bar\tau_2)}\right| = \left|\frac{(\hat r_n/\hat b_n)(\tau_2)}{(\hat r_n/\hat b_n)(\tau_1)}\right| \leq \exp\{s_4|\tau_1-\tau_2|\}.
\]
Moreover, denoting by $\Arg(z)\in(-\pi,\pi]$ the principal argument of $z\neq0$ and using
\[
|\Arg(1+z)|\leq \arcsin|z| \leq \pi|z|/2, \quad |z|<1,
\]
we get that
\[
\left|\Arg\left(\frac{(\hat r_n/\hat b_n)(\tau_2)}{(\hat r_n/\hat b_n)(\tau_1)}\right)\right| \leq \sum_{c\in\hat E_n}\left|\Arg \left(1 + \frac{(\tau_1-\tau_2)(\bar c-\hat\Delta_n(c))}{(1-\bar c\tau_1)(1-\hat\Delta_n(c)\tau_2)}\right)\right| \leq \frac{s_4|\tau_1-\tau_2|}{2/\pi} \nonumber
\]
for $s_4|\tau_1-\tau_2|\leq1$. Hence, for such $\tau_1$ and $\tau_2$ we obtain that
\[
\left|(\hat r_n/\hat b_n)(\tau_1)-(\hat r_n/\hat b_n)(\tau_2)\right| \leq s_3\left|1 - \frac{(\hat r_n/\hat b_n)(\tau_2)}{(\hat r_n/\hat b_n)(\tau_1)}\right| \leq s_5|\tau_1-\tau_2|
\]
for some absolute constant $s_5$. This finishes the proof of the theorem, granted \cite[Thm.~3]{uBY3}.
\end{proof}

\section{Proofs of Theorems \ref{thm:mer1}--\ref{thm:mer4}}
\label{sec:proofs2}

We start by providing several auxiliary results.

\begin{lem}
\label{lem:auxS}
Under the conditions of either Theorem \ref{thm:mer2} or Theorem \ref{thm:mer3} it holds that
\begin{equation}
\label{eq:Qnm}
q_n = u_{n-m}q_{n,m}, \quad q_{n,m}= (1+o(1))q,
\end{equation}
locally uniformly in $\overline\C\setminus\z(q)$, where $\deg(u_{n-m})=n-m$ and $\deg(q_{n,m})=m$. Moreover, the zeros of the polynomials $\widetilde q_n^2$ form an admissible interpolation scheme.
\end{lem}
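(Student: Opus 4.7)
The plan is to extract non-Hermitian orthogonality for $q_n$ from the critical-point identity~\eqref{eq:errFunDec}, recast it in the form of Theorem~\ref{thm:ortho}, and then read off both the factorization and the admissibility from the resulting strong asymptotics. Recall that $v_n=(q_n/\widetilde q_n)w_n$ and $\ct-g_n=-\Pj_-(\ct v_n)/v_n$; pairing $\ct-g_n$ with the monomials $t^j$ for $j<n$ on $\T$ and collapsing the contour onto $\supp(\mes)\cup\z(q)$ (as in \cite{BStW01,BS02,uBY1}) yields
\[
\int_{[c,d]} t^j\, q_n(t)\, q^2(t)\, w_n^{p^\prime}(t)\, \frac{d\mes(t)}{\widetilde q_n^2(t)\,\widetilde q^2(t)}=0,\qquad j=0,\ldots,n-1,
\]
the jump of the Cauchy integral producing the integral on $[c,d]$ and the $\T$-reflection of $\widetilde q_n w_n$ against the rational summand $p/q$ producing the $q^2/\widetilde q^2$ weight. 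Substituting $d\mes=h\hbar\hbar_\x\,d\ed$ and setting $h_n:=q^2 w_n^{p^\prime}$, $v_{n+m}:=\widetilde q_n^2\widetilde q^2$, this is precisely the orthogonality demanded by Theorem~\ref{thm:ortho}: by Theorem~\ref{thm:mer1}, $h_n$ is a normal family of nonvanishing holomorphic functions in a fixed neighborhood of $[c,d]$ with no vanishing limit point, and $v_{n+m}$ is a monic polynomial of degree at most $2(n+m)$.

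The nontrivial hypothesis to verify before appealing to Theorem~\ref{thm:ortho} is the admissibility of the scheme $\{\widetilde q_n^2\widetilde q^2\}$. The fixed factor $\widetilde q^2$ contributes zeros at $\z(q)^*\subset\da$ and is harmless. For $\widetilde q_n^2$ I would invoke Lemma~3.4 of~\cite{uBY1}, whose hypotheses are strictly weaker than those in force here, to obtain a priori that $\{w_n^{p^\prime/2}\}$ is normal in $\das$ with no vanishing limit in $\D$, and that, up to $O(1)$ stray zeros confined to a fixed compact subset of $\D\setminus[c,d]$, the counting measures of zeros of $q_n$ converge weak$^*$ to $\ged$. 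Reflecting across $\T$, the counting measures of zeros of $\widetilde q_n$ converge weak$^*$ to a measure supported on a compact subset of $\dr$, hence of finite Green energy relative to $\dr$; the summability $\sum|\map(\bar e)-\map(\Delta_n(e))|=O(1)$ is achieved by pairing each zero of $\widetilde q_n$ with a nearby complex-conjugate partner using that $\map$ is Lipschitz on the relevant compactum, the leftover unpaired zeros being boundedly many.

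With admissibility in hand, Theorem~\ref{thm:ortho} yields locally uniform asymptotics for $q_n$ in $\dr$, which force $q_n$ to be nonvanishing there for large $n$. This is a priori incompatible with the fact that $\ct$ has poles on $\z(q)\subset\dr$ that the meromorphic approximant $g_n$ must reproduce. The resolution is that Theorem~\ref{thm:ortho} is applied not to $q_n$ itself but to the cofactor $u_{n-m}$ in the factorization $q_n=u_{n-m}q_{n,m}$, where $q_{n,m}$ collects the boundedly many zeros of $q_n$ lying in a fixed neighborhood of $\z(q)$. Absorbing $q_{n,m}$ into an updated $h_n$ produces orthogonality of degree $n-m$ for $u_{n-m}$; Theorem~\ref{thm:ortho} then pins down exactly $n-m$ zeros clustering on $[c,d]$ and rules out any further stray zero, so $\deg(u_{n-m})=n-m$ and $\deg(q_{n,m})=m$ precisely.

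The main obstacle is establishing $q_{n,m}\to q$, which is needed both for the lemma and to secure the admissibility of $\{\widetilde q_n^2\}$ alone as claimed. I would pass to a convergent subsequence with limit $q^*$, a monic polynomial of degree at most $m$ whose zeros lie in $\z(q)$, and argue by pole matching: the $L^p(\T)$-boundedness of the error $\ct-g_n$ inherent in~\eqref{eq:merAppr}, combined with the fact that $g_n$ is meromorphic in $\D$ with poles at the zeros of $b_n$, forces the principal parts of $g_n$ at each $\eta\in\z(q)$ to reproduce, in the limit, those of $\ct$; this yields $\mathrm{ord}_\eta(q^*)\geq m(\eta)$ for every $\eta$, and hence $q^*=q$. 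Once $q_{n,m}\to q$ is in place, the sharp distribution of the zeros of $q_n$ just obtained shows that the zeros of $\widetilde q_n^2$ alone already form an admissible interpolation scheme, completing the proof.
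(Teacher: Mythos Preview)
Your plan has the logical dependencies inverted. In the paper, Lemma~\ref{lem:auxS} is a \emph{prerequisite} for invoking Theorem~\ref{thm:ortho} in this setting, not a consequence of it: the proof of Theorem~\ref{thm:mer1} applies Theorem~\ref{thm:ortho} with $v_n=\widetilde q_n^2$ and $h_n=q_{n+m,m}qw_{n+m}$, and both the admissibility of $\{\widetilde q_n^2\}$ and the limit $q_{n,m}\to q$ (needed so that $\{h_n\}$ is a normal family with nonvanishing limit points) are supplied by Lemma~\ref{lem:auxS}. Your appeal to Theorem~\ref{thm:mer1} to control $w_n$ is therefore circular, and your attempt to feed Theorem~\ref{thm:ortho} back into the proof of the lemma is circular for the same reason: Theorem~\ref{thm:ortho} demands admissibility of the scheme as a hypothesis, which is precisely one of the two things you are trying to prove.

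There are also concrete errors in the ingredients. The orthogonality relation you write down is not the one that holds: from \eqref{eq:auxortho} with $l_{n-1}=t^jq$ one gets weight $qw_n/\widetilde q_n^{\,2}$ against $d\mes$, with a single power of $q$, a single power of $w_n$ (not $w_n^{p'}$), and no factor $1/\widetilde q^{\,2}$; the contour integral over $\T$ simply vanishes once the pole of $p/q$ is cancelled, it does not ``produce a $q^2/\widetilde q^2$ weight''. More importantly, your admissibility argument is the real gap. Weak$^*$ convergence of the counting measures to $\ged$ does not by itself give the summability $\sum_j|\map(1/\bar\xi_{j,n})-\map(1/\xi_{j,n})|=O(1)$; the zeros of $q_n$ are generically nonreal, and what one actually needs is a uniform bound on $\sum_j(\pi-\ang(\xi_{j,n}))$. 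This is exactly what the paper uses, and it comes from \cite[Lem.~3.1]{uBY1}, which in turn relies essentially on the bounded-variation-of-argument hypothesis on $\dot\mes$. Likewise, the factorization \eqref{eq:Qnm} with $q_{n,m}\to q$ is not deduced from strong asymptotics but is imported from \cite[Thm.~2.4]{uBY1}, and the weak$^*$ limit of the zero counting measures from \cite[Thm.~2.1]{uBY1}. In short, the lemma is proved entirely by quoting the earlier results of \cite{uBY1}, which hold under weaker hypotheses; Theorem~\ref{thm:ortho} plays no role here.
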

\begin{proof} It follows from \cite[Thm. 2.4]{uBY1} that if $\supp(\mes)$ is a regular set with respect to the Dirichlet problem, $\dot\mes$ has an argument of bounded variation on $\supp(\mes)$, and $|\mes|([x-\delta,x+\delta])\geq l\delta^L$ for all $x\in[c,d]$ and some fixed positive constants $l$ and $L$, then in any neighborhood of $\eta\in\z(q)$ the polynomials $q_n$ have at least $m(\eta)$ zeros for all $n$ large enough (in fact, no more then $m(\eta)$ plus an absolute constant depending only on $\ct$), which is indeed equivalent to (\ref{eq:Qnm}). Clearly, all these requirements on the measure $\mes$ are met in the present case.

Concerning the admissibility property, observe that the zeros of $q_n$ are contained in $\D$ by the very definition of $g_n$ and their counting measures converge weak$^*$ to the Green equilibrium distribution on $[c,d]$ by \cite[Thm. 2.1]{uBY1}. Thus, the second requirement for admissibility is satisfied. So, it only remains to construct the rearrangements $\Delta_n$ that we shall simply take to be the identity mappings. This way we are required to show that the sums $\sum_{j=1}^n |\phi(\bar\xi_{j,n})-\phi(\xi_{j,n})|$ remain bounded when $n\to\infty$, where $\xi_{j,n}$ are the zeros of $q_n$ and $\phi(\cdot):=\map(1/\cdot)$. Since $\phi$ is holomorphic in $\dr^*$, $\{\xi_{j,n}\}\subset\D$, and $\overline\D\subset\dr^*$, it holds that that
\[
|\phi(z_1)-\phi(z_2)| \leq s_1|z_1-z_2|, \quad z_1,z_2\in\D,
\]
where $s_1 : = \max_\T |\map^\prime| = \max_\T|\phi^\prime| = \max_\D |\phi^\prime|$ by the very definition of $\phi$ and the maximum modulus principle for analytic functions. Hence,
\begin{equation}
\label{eq:sums}
\sum_{j=1}^n |\phi(\bar\xi_{j,n})-\phi(\xi_{j,n})| \leq 2s_1 \sum_{j=1}^n|\im(\xi_{j,n})| \leq 2s_1 \sum_{j=1}^n(\pi-\ang(\xi_{j,n})),
\end{equation}
where $\ang(z):=|\Arg(a-z)-\Arg(b-z)|$, $\Arg(z)\in(-\pi,\pi]$ is the principal branch of the argument of $z$, and we set $\Arg(0)=\pi$. The uniform boundedness of the sums on the right-hand side of (\ref{eq:sums}) was established in \cite[Lem. 3.1]{uBY1}, using in an essential manner that the argument of $\dot\mes$ is of bounded variation, as a prerequisite for the proof of \cite[Thm. 2.1]{uBY1}. This finishes the proof of the lemma.
\end{proof}

\begin{prop}
\label{pr:1}
Under the conditions of either Theorem \ref{thm:mer2} or Theorem \ref{thm:mer3} it holds that
\begin{equation}
\label{eq:errForm}
(\ct-g_n)(z) = \frac{q_{n,m}(z)}{(b_n^2w_nq)(z)}\int\frac{(b_n^2w_nq)(t)}{q_{n,m}(t)}\frac{d\mes(t)}{z-t}, \quad z\in\overline\D\cap\dr.
\end{equation}
\end{prop}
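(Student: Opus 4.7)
The plan is to reduce \eqref{eq:errForm} to the equivalent identity $\Phi_n(z)(\ct-g_n)(z) = \int \Phi_n(t)\,d\mes(t)/(z-t)$ on $\overline\D\cap\dr$, where $\Phi_n := b_n^2 w_n q/q_{n,m}$. A key factorization will be $\Phi_n = \alpha_n v_n$ with $\alpha_n := u_{n-m}q/\widetilde q_n$ (using $b_n = q_n/\widetilde q_n$ together with $q_n = u_{n-m}q_{n,m}$); since all zeros of $\widetilde q_n$ lie in $\overline\C\setminus\overline\D$, the function $\alpha_n$ is analytic on $\overline\D$ and hence belongs to $H^\infty$.

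The central computation is to apply the Cauchy representation $\Pj_+(\Phi_n\ct)(z) = \int_\T \Phi_n(\xi)\ct(\xi)/(\xi-z)\,d\xi/(2\pi i)$ for $z\in\D$ and deform $\T$ inward. The essential observation is that $\Phi_n\ct$ is analytic in $\D\setminus[c,d]$: the factor $q$ in $\Phi_n = u_{n-m}^2 q_{n,m} w_n q/\widetilde q_n^2$ cancels the poles of $p/q$ in $\ct$, so the only singularities in $\D$ are those of the branch cut on $[c,d]$. Shrinking $\T$ to a tight contour around $[c,d]$, collecting the residue at $\xi=z$, and applying the Plemelj jump $\ct^+-\ct^- = -2\pi i\,d\mes/dt$ then yields
\[
\Pj_+(\Phi_n\ct)(z) = \Phi_n(z)\ct(z) - \int \frac{\Phi_n(t)\,d\mes(t)}{z-t}.
\]
Thus \eqref{eq:errForm} reduces to the identification $\Pj_+(\Phi_n\ct) = \Phi_n g_n$ on $\T$, i.e., to $\Phi_n(\ct-g_n)\in\bar H_0^p$. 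Since $\Phi_n g_n = \alpha_n \Pj_+(\ct v_n)$ is an $H^\infty$ function times an $H^{p'}$ function and so lies in $H^{p'}$, we have $\Pj_+(\Phi_n g_n) = \Phi_n g_n$. Moreover, using the critical point identity $v_n(\ct-g_n)|_\T = \Pj_-(\ct v_n)$, the target membership becomes $\alpha_n\Pj_-(\ct v_n)\in\bar H_0^p$.

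The principal obstacle is verifying this last membership. While $\Pj_-(\ct v_n)\in\bar H_0^2$ by definition of the Hankel operator and $\alpha_n\in H^\infty$, the analytic continuation of $\alpha_n$ to $\overline\C\setminus\overline\D$ has simple poles precisely at $\z(\widetilde q_n)$, so \emph{a priori} the product fails to lie in $\bar H_0^p$. The cancellation of these extraneous polar singularities is furnished by the orthogonality inherent in the critical-point equations for $v_n$ (the displayed equations just before \eqref{eq:CriticalValue}), which force $\Pj_-(\ct v_n)$ to vanish at each $\eta\in\z(\widetilde q_n)$ with the multiplicity of $\bar\eta^{-1}\in\z(q_n)$. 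Once this vanishing is established, the poles of $\alpha_n$ are killed in the product, placing $\alpha_n\Pj_-(\ct v_n)$ in $\bar H_0^p$, and \eqref{eq:errForm} follows.
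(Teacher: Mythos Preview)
Your approach is correct and takes a genuinely different route from the paper's.

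\textbf{How the two proofs differ.} The paper works from \emph{outside} the disk: it writes $w_n(\ct-g_n)=\Pj_-(\ct v_n)/b_n$, represents this for $|z|>1$ as a Cauchy integral over $\T$, splits off the $p/q$-contribution, and then applies the orthogonality relations
\[
\int \frac{(l_{n-1}q_nw_n)(t)}{\widetilde q_n^2(t)}\,d\mes(t)+\int_\T\frac{(l_{n-1}q_nw_n)(\tau)}{\widetilde q_n^2(\tau)}\frac{p(\tau)}{q(\tau)}\frac{d\tau}{2\pi i}=0
\]
twice (with $l_{n-1}=(\widetilde q_n(z)-\widetilde q_n(\cdot))/(z-\cdot)$ and then $l_{n-1}=((qu_{n-m})(z)-(qu_{n-m})(\cdot))/(z-\cdot)$) to eliminate the contour integral, after which the formula extends to $\dr$ by analyticity. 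You instead work from \emph{inside} the disk, build the factor $q$ into $\Phi_n$ so that the poles of $p/q$ disappear at once, and reduce everything to the membership $\alpha_n\ho_\ct(v_n)\in\bar H_0^2$. Your argument is more structural and avoids the telescoping trick with test polynomials; the paper's route has the advantage that the orthogonality relations it displays are reused later (e.g.\ in deriving \eqref{eq:ExtendOuter}).

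\textbf{Two points to tighten.} First, the vanishing of $\ho_\ct(v_n)$ at $\z(\widetilde q_n)$ is not a direct consequence of the raw critical-point equations you cite. The precise input is the factorization
\[
\ho_\ct(v_n)=\sigma_n\bigl(b_nj_nw_n^{p'/2}\bigr)^\sigma\quad(p>2),\qquad \ho_\ct(v_n)=(b_na_n)^\sigma\quad(p=2),
\]
which is \cite[Prop.~9.1]{BS02} (equation \eqref{eq:hankelonvn} in the paper). Since $\overline{b_n(1/\bar z)}=\widetilde q_n(z)/q_n(z)$, this gives at once
\[
\alpha_n\ho_\ct(v_n)=\sigma_n\,\frac{u_{n-m}q}{\widetilde q_n}\cdot\frac{\widetilde q_n}{q_n}\,(j_nw_n^{p'/2})^\sigma=\sigma_n\,\frac{q}{q_{n,m}}\,(j_nw_n^{p'/2})^\sigma,
\]
and because $\z(q_{n,m})\subset\z(q_n)\subset\D$, the rational factor $q/q_{n,m}$ is bounded analytic in $|z|\ge1$, so the product is manifestly in $\bar H_0^2$ (similarly with $a_n$ when $p=2$). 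This also disposes of the need to argue separately that removing the poles places the product back in $\bar H_0^2$. Second, the contour-deformation/Plemelj step is a little delicate as written, since at this stage $w_n$ is only known to lie in $H^{p'}$ and need not extend continuously across $\T$; the cleanest justification is Fubini--Tonelli applied to $\Pj_+(\Phi_n\ct_\mu)$, exactly as the paper does for $\Pj_-(\ct v_n)$.
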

\begin{proof} Let $\{v_n\}$ be a sequence of singular vectors associated to $\{g_n\}$ having  inner-outer factorizations (\ref{eq:sinVecDec}). It was obtained in \cite[Prop. 9.1]{BS02} that
\begin{equation}
\label{eq:hankelonvn}
\begin{array}{ll}
\ho_f(v_n)(\xi) = \sigma_n\overline{\xi}~\overline{\left(b_nj_nw_n^{p^\prime/2}\right)(\xi)} = \sigma_n\left(b_nj_nw_n^{p^\prime/2}\right)^\sigma(\xi), & p>2 \\ 
\ho_f(v_n)(\xi) = \overline{\xi}~\overline{(b_na_n)(\xi)}=\left(b_na_n\right)^\sigma(\xi), & p=2,
\end{array}
\end{equation}
for a.e. $\xi\in\T$, where $j_n$ is some inner function and $a_n\in H^2$. Following the analysis in \cite[Sec. 10]{BS02}, this leads to orthogonality relations of the form
\begin{equation}
\label{eq:auxortho}
\int\frac{(l_{n-1}q_nw_n)(t)}{\widetilde q_n^2(t)}d\mes(t) + \int_\T \frac{(l_{n-1}q_nw_n)(\tau)}{\widetilde q_n^2(\tau)} \frac{p(\tau)}{q(\tau)}\frac{d\tau}{2\pi i} = 0
\end{equation}
for any polynomial $l_{n-1}$, $\deg(l_{n-1})\leq n-1$. In another connection, (\ref{eq:errFunDec}) yields that
\begin{equation}
\label{eq:auxerr}
w_n(\ct-g_n) = \frac{w_n\ho_\ct(v_n)}{v_n} = \frac{\Pj_-(\ct v_n)}{b_n}.
\end{equation}
The right-hand side of (\ref{eq:auxerr}) is holomorphic outside of $\overline\D$ and is vanishing at infinity. So, by the Cauchy theorem it can be written as
\begin{eqnarray}
\frac{\Pj_-(\ct v_n)}{b_n}(z) &=& \frac{1}{b_n(z)}\int_\T\frac{(\ct v_n)(\tau)}{z-\tau}\frac{d\tau}{2\pi i} \nonumber \\
{} &=& \frac{\widetilde q_n(z)}{q_n(z)}\left(\int\frac{(q_nw_n)(t)}{\widetilde q_n(t)}\frac{d\mes(t)}{z-t} + \frac{1}{2\pi i}\int_\T \frac{(q_nw_n)(\tau)}{\widetilde q_n(\tau)} \frac{p(\tau)}{q(\tau)}\frac{d\tau}{z-\tau} \right) \nonumber
\end{eqnarray}
for $|z|>1$. Using (\ref{eq:auxortho}) with $l_{n-1}(t)=(\widetilde q_n(z)-\widetilde q_n(t))/(z-t)$, we get that
\[
\frac{\Pj_-(\ct v_n)}{b_n}(z) = \frac{\widetilde q_n^2(z)}{q_n(z)}\left(\int\frac{(q_nw_n)(t)}{\widetilde q_n^2(t)}\frac{d\mes(t)}{z-t} + \frac{1}{2\pi i}\int_\T \frac{(q_nw_n)(\tau)}{\widetilde q_n^2(\tau)} \frac{p(\tau)}{q(\tau)}\frac{d\tau}{z-\tau} \right).
\]
Applying (\ref{eq:auxortho}) again, now with $l_{n-1}=((qu_{n-m})(z)-(qu_{n-m})(\cdot))/(z-\cdot)$, and using the Cauchy integral formula to get rid of the second integral, we obtain that
\[
\frac{\Pj_-(\ct v_n)}{b_n}(z) = \frac{\widetilde q_n^2(z)}{(u_{n-m}qq_n)(z)} \int\frac{(u_{n-m}qq_nw_n)(t)}{\widetilde q_n^2(t)}\frac{d\mes(t)}{z-t}, \quad |z|>1.
\]
Observe now that the last expression is well-defined as a meromorphic function everywhere in $\dr$. Thus, it follows from (\ref{eq:auxerr}) that (\ref{eq:errForm}) holds.
\end{proof}

\begin{lem}
\label{lem:auxN}
Let $\lambda_n$ be a sequence of Borel complex measures on $[c,d]$ such that
\[
\hat F_n(z) := \int\frac{q(t)}{q_{n,m}(t)}\frac{d\lambda_n(t)}{z-t}, \quad z\in\dr,
\]
converges to some function $F$ locally uniformly in $\dr$. Then
\[
F_n - F = o(1), \quad F_n(z) := \int\frac{d\lambda_n(t)}{z-t}, \quad z\in\dr,
\]
locally uniformly in $\dr$.
\end{lem}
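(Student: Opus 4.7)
The plan is to reduce everything to the Cauchy transform $\hat F_n$ of an auxiliary measure and then control the discrepancy $F_n-\hat F_n$ by a partial-fraction decomposition. For $n$ large enough, the zeros of $q_{n,m}$ lie in a neighborhood of $\z(q)\subset\dr$ by Lemma~\ref{lem:auxS}, hence away from $[c,d]$, so we may define a complex Borel measure $\mu_n$ on $[c,d]$ by $d\mu_n:=(q/q_{n,m})\,d\lambda_n$. Then $\hat F_n$ is exactly the Cauchy transform of $\mu_n$, and (normalizing $q$ and $q_{n,m}$ to be monic) $r_n:=q_{n,m}-q$ is a polynomial of degree at most $m-1$ whose coefficients tend to zero. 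Simple algebra gives
\[
F_n(z) - \hat F_n(z) \;=\; \int \frac{r_n(t)}{q(t)(z-t)}\,d\mu_n(t),
\]
so it suffices to show that the right-hand side tends to zero locally uniformly in $\dr$.

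Since $\deg(r_n)<\deg(q)=m$, partial fractions yield $r_n(t)/q(t)=\sum_{\eta\in\z(q)}\sum_{l=1}^{m(\eta)}c_{n,\eta,l}(t-\eta)^{-l}$ with $c_{n,\eta,l}\to 0$. Next, the elementary identity
\[
\frac{1}{(z-t)(t-\eta)^l} \;=\; \frac{1}{(z-\eta)^l(z-t)} + \sum_{j=1}^l \frac{1}{(z-\eta)^{l-j+1}(t-\eta)^j},
\]
combined with the formula $\int d\mu_n(t)/(t-\eta)^j = -\hat F_n^{(j-1)}(\eta)/(j-1)!$, obtained by differentiating under the integral sign, leads to
\[
F_n(z)-\hat F_n(z) \;=\; \sum_{\eta,l}c_{n,\eta,l}\left[\frac{\hat F_n(z)}{(z-\eta)^l} - \sum_{j=1}^l \frac{\hat F_n^{(j-1)}(\eta)}{(j-1)!\,(z-\eta)^{l-j+1}}\right].
\]
On any compact $K\subset\dr\setminus\z(q)$, the quantities $\hat F_n(z)$ and $\hat F_n^{(j-1)}(\eta)$ remain uniformly bounded---the former because $\hat F_n\to F$ uniformly on $K$, the latter by the Weierstrass theorem applied at each $\eta\in\dr$---while the factors $(z-\eta)^{-k}$ are bounded as well. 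Since each $c_{n,\eta,l}\to 0$, we conclude that $F_n-\hat F_n\to 0$ uniformly on $K$.

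To upgrade this to locally uniform convergence on all of $\dr$, including neighborhoods of the points of $\z(q)$, observe that $F_n-\hat F_n$ is holomorphic on $\dr$ (both are Cauchy transforms of measures supported on $[c,d]$). For each $\eta\in\z(q)$ pick a small closed disk $\overline{D_\eta}\subset\dr$ disjoint from the other zeros of $q$; uniform convergence to zero is already established on $\partial D_\eta\subset\dr\setminus\z(q)$, so the maximum modulus principle extends it to $\overline{D_\eta}$, and the conclusion follows upon combining with the hypothesis $\hat F_n\to F$. The main subtlety is that convergence of $\hat F_n$ does \emph{not} imply any bound on the total variation of $\mu_n$, so a crude sup-norm estimate of $r_n/q$ on $[c,d]$ is insufficient; the partial-fraction trick is essential, as it replaces such a bound by a finite linear combination of \emph{pointwise values} of $\hat F_n$ and its derivatives at $\z(q)$, which are bounded precisely by virtue of the hypothesis.
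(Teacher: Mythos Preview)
Your proof is correct and follows essentially the same idea as the paper's: express $F_n$ (or $F_n-\hat F_n$) as $\hat F_n$ plus a finite linear combination of the values $\hat F_n^{(j)}(\eta)$, $\eta\in\z(q)$, with coefficients tending to zero, then invoke the maximum modulus principle near $\z(q)$. The only cosmetic difference is that the paper packages this via a Cauchy integral over two contours, obtaining the compact formula $F_n=\bigl[q_{n,m}\hat F_n-T_{\eta,m-1}(q_{n,m}\hat F_n;\cdot)\bigr]/q$ (in the case $q=(z-\eta)^m$), whereas you arrive at the same identity through explicit partial fractions and the telescoping identity for $1/\bigl((z-t)(t-\eta)^l\bigr)$.
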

\begin{proof}
Assume first that $q(z)=(z-\eta)^m$. Let $z\in\da$ and $\Gamma_1$ and $\Gamma_2$ be two Jordan curves encompassing $[c,d]$ and $\{\eta\}$, respectively, separating them from each other, and containing $z$ within the unbounded components of their complements. Then
\begin{eqnarray}
\left(\hat F_n\frac{q_{n,m}}{q}\right)(z) &=& \frac{1}{2\pi i}\int_{\Gamma_1}\left(\hat F_n\frac{q_{n,m}}{q}\right)(\tau)\frac{d\tau}{z-\tau} + \frac{1}{2\pi i}\int_{\Gamma_2}\left(\hat F_n\frac{q_{n,m}}{q}\right)(\tau)\frac{d\tau}{z-\tau} \nonumber \\
{} &=& F_n(z) + \frac{1}{(m-1)!}\left(\frac{\hat F_nq_{n,m}}{z-\cdot}\right)^{(m-1)}(\eta), \nonumber
\end{eqnarray}
where we used the Fubini-Tonelli theorem and the Cauchy integral formula. Thus,
\begin{equation}
\label{eq:wierd}
F_n(z) = \left[(q_{n,m}\hat F_n)(z) - T_{\eta,m-1}(q_{n,m}\hat F_n;z)\right]/q(z), \quad z\in\dr,
\end{equation}
where $T_{\eta,m-1}(f;\cdot)$ is the $(m-1)-$st partial sum of the Taylor expansion of $f$ at $\eta$. By (\ref{eq:Qnm}) the polynomials $T_{\eta,m-1}(q_{n,m}\hat F_n;\cdot)$ converge to zero as $n$ tends to infinity and the claim of the lemma follows by the maximum modulus principle for analytic functions. By partial fraction decomposition, the case of a general $q$ is no different.
\end{proof}
\begin{proof}[Proof of Theorem \ref{thm:mer1}] Let
\begin{equation}
\label{eq:dnun}
d\nu_n := \frac{q_{n+m,m}qw_{n+m}}{\widetilde q_{n+m}^2}d\mes = \frac{q_{n+m,m}qw_{n+m}h\hbar}{\widetilde q_{n+m}^2}d\ed.
\end{equation}
Then we get from (\ref{eq:auxortho}) applied with $n$ replaced by $n+m$ and $l_{n+m-1}(t)=t^jq(t)$, $j=0,\ldots,n-1$, that
\begin{equation}
\label{eq:orthoun}
\int t^ju_n(t)d\nu_n(t) = 0.
\end{equation}
So, the asymptotic behavior of $u_n$ is governed by Theorem \ref{thm:ortho}, applied with $v_n=\widetilde q_n^2$ and $h_n=q_{n+m,m}qw_{n+m}$, due to Lemma \ref{lem:auxS} and the fact that $\{w_n\}$ is a normal family in $\D$ none of which limit points has zeros. The latter was obtained in \cite[Lem. 3.4]{uBY1} under the mere assumption that $\mu$ has infinitely many points in the support and an argument of bounded variation.

In another connection, observe that
\[
b_n\left(b_nj_nw_n^{p^\prime/2}\right)^\sigma = |b_n|^2\left(j_nw_n^{p^\prime/2}\right)^\sigma = \left(j_nw_n^{p^\prime/2}\right)^\sigma \quad \mbox{on} \quad \T
\]
and that $\left(j_nw_n^{p^\prime/2}\right)^\sigma$ is the trace of a function from $\bar H_0^2$. Thus, it follows from (\ref{eq:hankelonvn}) that
\[
\Pj_-(b_n\ho_\ct(v_n)) = \sigma_n\Pj_-\left(b_n\left(b_nj_nw_n^{p^\prime/2}\right)^\sigma\right) = \sigma_n\left(j_nw_n^{p^\prime/2}\right)^\sigma.
\]
It is also readily checked that
\[
\Pj_-(b_n\ho_\ct(v_n))(z) = \Pj_-(b_n\Pj_-(\ct v_n))(z) = \Pj_-(\ct b_nv_n)(z) = \int_\T\frac{(\ct b_nv_n)(\tau)}{z-\tau}\frac{d\tau}{2\pi i}
\]
for $|z|>1$. Hence, we derive by using the Fubini-Tonelli theorem that
\begin{equation}
\label{eq:ExtendOuter}
\frac{\sigma_n}{\gamma_{n-m}}\left(j_nw_n^{p^\prime/2}\right)^\sigma(z) = \frac{1}{\gamma_{n-m}} \left( \int \frac{(b_n^2w_n)(t)}{z-t}d\mes(t) + \int_\T\frac{(b_n^2w_n)(\tau)}{z-\tau}\frac{p(\tau)}{q(\tau)}\frac{d\tau}{2\pi i} \right),
\end{equation}
for $|z|>1$, where $\gamma_n$ has the same meaning as in Theorem \ref{thm:ortho}. As the right-hand side of (\ref{eq:ExtendOuter}) is defined everywhere in $\da$, the restriction $|z|>1$ is no longer necessary. This, in particular, implies that $j_n$ is a finite Blaschke product as neither singular inner factors nor infinite Blaschke products can be extended even continuously on $\T$. However, notice that first we should evaluate the second integral on the right-hand side of (\ref{eq:ExtendOuter}) by the residue formula and only then remove the restriction $|z|>1$. Clearly, this integral represents a rational function vanishing at infinity whose poles are those of $q$. It is also easy to observe that if
\begin{equation}
\label{eq:aa1}
\gamma_{n-m}^{-1}(b_n^2w_np)^{(k)}(\eta) = o(1), \quad k=0,\ldots,m(\eta)-1, \quad \mbox{and} \quad (b_n^2w_np)(\eta) \neq 0,
\end{equation}
this rational function converges to zero locally uniformly in $\overline\C\setminus\z(q)$ and has poles of exact multiplicity $m(\eta)$ at each $\eta\in\z(q)$. Now, we have by (\ref{eq:errForm}), \eqref{eq:secondkind}, and (\ref{eq:nhop}) that
\begin{equation}
\label{eq:aa2}
(\ct-g_n) = \frac{q_{n,m}}{q}\frac{\gamma_{n-m}y_n}{b_n^2w_n}, \quad y_n := \frac{1}{\gamma_{n-m}}u_{n-m}R_{n-m},
\end{equation}
and
\begin{equation}
\label{eq:asympyn}
y_n(z) := \int\frac{u_{n-m}^2(t)}{\gamma_{n-m}}\frac{d\nu_{n-m}(t)}{z-t} = \int\frac{(b_n^2w_n)(t)}{\gamma_{n-m}} \frac{q(t)}{q_{n,m}(t)}\frac{d\mes(t)}{z-t} = \frac{1+o(1)}{\sr(z)}
\end{equation}
locally uniformly in $\dr$. Then we get from (\ref{eq:aa2}) that
\[
(q_{n,m}y_n)(z) = \gamma_{n-m}^{-1}(b_n^2w_n)(z)(\ct_\mu q+p)(z) + \gamma_{n-m}^{-1}(qb_nw_nh_n)(z), \quad z\in\dr,
\]
where $\ct=\ct_\mu+p/q$ and $g_n=h_n/b_n$, and therefore for $\eta\in\z(q)$ we obtain
\begin{equation}
\label{eq:aa7}
(q_{n,m}y_n)^{(k)}(\eta) = \gamma_{n-m}^{-1}(b_n^2w_np)^{(k)}(\eta), \quad k=0,\ldots,m(\eta)-1.
\end{equation}
Hence, the first part of (\ref{eq:aa1}) follows from (\ref{eq:Qnm}) and the normality of $\{y_n\}$, which is immediately deduced from \eqref{eq:asympyn}. The second part of \eqref{eq:aa1} holds since \eqref{eq:asympyn} and \eqref{eq:aa7}, applied with $k=0$, yield that for all $n$ large enough we have
\[
0 \neq y_n(\eta) = \gamma_{n-m}^{-1}(b_nw_npu_{n-m}/\widetilde q_n))(\eta)
\]
and therefore $(b_nw_np)(\eta)$ as well as $(b_n^2w_np)(\eta)$ cannot vanish.

In another connection, (\ref{eq:asympyn}) and Lemma \ref{lem:auxN} yield that
\begin{equation}
\label{eq:aa3}
\frac{1}{\gamma_{n-m}}\int \frac{(b_n^2w_n)(t)}{z-t}d\mes(t) = \frac{1+o(1)}{\sr(z)}
\end{equation}
locally uniformly in $\dr$. Thus, combining (\ref{eq:aa1}) and (\ref{eq:aa3}) with (\ref{eq:ExtendOuter}), we get that
\[
\frac{\sigma_n}{\gamma_{n-m}}\left(j_nw_n^{p^\prime/2}\right)^\sigma = \frac{1+o(1)}{\sr} + \frac{\ell_n}{q}
\]
locally uniformly in $\dr$, where $\deg(\ell_n)<m$, the polynomials $\ell_n$ are coprime with $q$, and converge to zero locally uniformly in $\C$ when $n\to\infty$. Equivalently, we have that
\begin{equation}
\label{eq:aa4}
\frac{\sigma_n}{\bar\gamma_{n-m}}j_nw_n^{p^\prime/2}\widetilde\sr = 1+o(1) + \frac{\widetilde\ell_n\widetilde\sr}{\widetilde q} = 1 +o(1),
\end{equation}
where the first $o(1)$ holds locally uniformly in $\dr^*$, the second one holds locally uniformly in $\das$, and $\widetilde\ell_n(z) := z^{m-1}\overline{\ell_n(1/\bar z)}$, $\deg(\widetilde\ell_n)<m$ since $\deg(\ell_n)<m$. This, in particular, implies that the Blaschke products $j_n$ are identically 1 for all $n$ large enough since the right-hand side of (\ref{eq:aa4}) cannot vanish in $\overline\D$ for such $n$. Finally, recall that by its very definition $w_n^{p^\prime/2}$ has unit $L^2$ norm. Therefore, deformation of the integral on $\T$ to $[c,d]$ covered twice by the Cauchy integral formula yields that
\begin{eqnarray}
\left(\frac{\sigma_n}{|\gamma_{n-m}|}\right)^2 &=& \frac{1}{2\pi} \int_\T \frac{|d\tau|}{|\widetilde\sr(\tau)|^2} + o(1) = \frac{1}{2\pi i}\int_\T\frac{d\tau}{(\sr\widetilde\sr)(\tau)} + o(1) \nonumber \\
\label{eq:aa5}
{} &=& \frac{1}{\pi i}\int_{[c,d]}\frac{dt}{(\sr^-\widetilde\sr)(t)} + o(1) = \mathcal{T}^{-2} + o(1)
\end{eqnarray}
since $\widetilde\sr(\tau)=\tau\overline{\sr(\tau)}$ on $\T$, $\sr^-= -\sr^+= -i|\sr^\pm|$ on $[c,d]$, and on account of (\ref{eq:ged}). Thus, (\ref{eq:mer1}) shall follow from \eqref{eq:aa4} and \eqref{eq:aa5} with $l_n:=\mathcal{T}\widetilde\ell_n$ upon showing that
\begin{equation}
\label{eq:aa6}
\gamma_n/|\gamma_n| = 1 + o(1).
\end{equation}
The latter is an easy consequence of (\ref{eq:aa4}) since $\widetilde\sr(0)=1$ and $w_n(0)>0$.
\end{proof}

In the next proposition of technical nature, we define a special sequence of Szeg\H{o} functions for the condenser $\dr\cap\dr^*$ that appears in Theorem \ref{thm:mer2}.

\begin{prop}
\label{pr:2}
For each $p\in[2,\infty]$ there exists a normal family of non-vanishing functions in $\dr\cap\dr^*$, denoted by $\{\szc_n\}$, such that
\[
\hat\gmc|\szc_n^\pm| = \left|(r_n/r)^\pm\szf_{b^2w\dot\mes}^\pm\right| \quad \mbox{on} \quad [c,d], \quad \gm_{\widetilde q_n/\widetilde q}~\frac{(\szf_{b^2w\dot\mes}r_n)(1)}{(r\szc_n)(1)} > 0,
\]
where $r_n:=r_n(\widetilde q_n;\cdot)$, $r:=r_m(\widetilde q;\cdot)$, $\hat\gmc^2=\gmc/|\gm_{b^2w\dot\mes}|$, and $\gmc$ is given by \eqref{eq:m}. Moreover, each $\szc_n$ satisfies $\szc_n(z)\overline{\szc_n(1/\bar z)} = 1$, $z\in\dr\cap\dr^*$, has continuous traces on each side of $[c,d]$, and has winding number zero on any curve separating $[c,d]$ from $[c,d]^*$.
\end{prop}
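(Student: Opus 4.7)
My plan is to build $\szc_n$ by first encoding the prescribed boundary data as a meromorphic function on $\dr$, then solving the resulting symmetric boundary-modulus problem on the condenser $\dr \cap \dr^*$, and finally pinning down the remaining unimodular constant via the stated normalization at $z=1$.

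First I would set $\Xi_n := \szf_{b^2w\dot\mes}\,r_n/r$, meromorphic in $\dr$, and observe that $|\Xi_n^\pm| = |(r_n/r)^\pm \szf_{b^2w\dot\mes}^\pm|$ on $[c,d]$, which is exactly the desired value of $\hat\gmc|\szc_n^\pm|$. Using $r_n^+ r_n^- = r^+ r^- = 1$ from \eqref{eq:rkpm} together with \eqref{eq:szegodecomp}, I compute $|\Xi_n^+ \Xi_n^-| = |b^2w\dot\mes|/|\gm_{b^2w\dot\mes}|$; the choice $\hat\gmc^2 = \gmc/|\gm_{b^2w\dot\mes}|$ is precisely what converts this to the cleaner form $\gmc|\szc_n^+\szc_n^-| = |b^2w\dot\mes|$. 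Integrating the logarithm against the Green equilibrium measure $\ged$ and using the definition \eqref{eq:m} of $\gmc$ yields $\int \log |\szc_n^+\szc_n^-| \, d\ged = 0$, which is exactly the compatibility condition guaranteeing the existence of a single-valued, zero-winding, nonvanishing holomorphic function on $\dr \cap \dr^*$ realizing the prescribed boundary moduli.

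Next I would transfer the problem to the annulus $\A_\rho$ via $\mpc$. Since $\mpc(1/\bar z) = 1/\overline{\mpc(z)}$, the involution $z \mapsto 1/\bar z$ on $\dr \cap \dr^*$ becomes $\zeta \mapsto 1/\bar\zeta$ on $\A_\rho$, and the $\pm$-sides of $[c,d]$ (resp.\ $[c,d]^*$) correspond to the two semicircles of $\T_\rho$ (resp.\ $\T_{1/\rho}$). The condenser analogue of classical outer factorization --- solving the Dirichlet problem on $\A_\rho$ with continuous boundary data $\log(|\Xi_n^\pm|/\hat\gmc)$ and taking its harmonic conjugate, which is single-valued precisely because of the vanishing-period condition derived above --- yields a holomorphic function realizing the prescribed boundary modulus and automatically satisfying the involution symmetry $\szc_n(z)\overline{\szc_n(1/\bar z)} = 1$. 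This function is unique up to a unimodular multiplicative constant, which I fix by imposing $\gm_{\widetilde q_n/\widetilde q}\,(\szf_{b^2w\dot\mes} r_n)(1)/(r\szc_n)(1) > 0$, a single scalar condition.

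Finally, for normality, the estimate \eqref{eq:boundrnhat} from the proof of Theorem \ref{thm:ortho}, combined with $r_n^+ r_n^- = 1$, gives uniform upper and lower bounds on $|r_n^\pm|$ on $[c,d]$; the traces $|\szf_{b^2w\dot\mes}^\pm|$ are continuous and nonvanishing by Dini-continuity of the weight, so $|\szc_n^\pm|$ is uniformly bounded above and away from zero on $[c,d]$, and the reflection symmetry extends the same bounds to $[c,d]^*$. Montel's theorem then delivers normality of $\{\szc_n\}$ in $\dr \cap \dr^*$. The main difficulty lies in the construction on the annulus: one must verify that the compatibility $\int \log|\szc_n^+\szc_n^-|\,d\ged = 0$ really does kill the period of the harmonic conjugate along any cycle separating $[c,d]$ from $[c,d]^*$; for positive measures this is transparent from the integral formula of \cite[Def.~2.38]{BStW01}, but in the present complex setting the phase must be tracked carefully, and the specific normalization $\hat\gmc^2 = \gmc/|\gm_{b^2w\dot\mes}|$ is exactly what is required to make this work.
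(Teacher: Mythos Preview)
Your proposal is correct and follows essentially the same route as the paper: both transfer the problem to the annulus $\A_\rho$ via $\mpc$, construct the Szeg\H{o} function for the condenser by solving a Dirichlet problem with boundary data $\log(|\Xi_n^\pm|/\hat\gmc)$ on $\T_\rho$ (extended to $\T_{1/\rho}$ by odd symmetry under $\zeta\mapsto1/\bar\zeta$), verify that the normalization by $\hat\gmc$ gives zero mean on $\T_\rho$ so the harmonic conjugate is single-valued, and then fix the remaining unimodular constant by the stated positivity at $z=1$. The paper outsources the annulus construction to \cite[Thm.~1.6]{LS_PAT92} while you sketch it directly; your explicit normality argument via the uniform bounds \eqref{eq:boundrnhat} on $|r_n^\pm|$ is a detail the paper leaves implicit. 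One small phrasing issue: the involution symmetry $\szc_n(z)\overline{\szc_n(1/\bar z)}=1$ is not ``automatic'' from solving a Dirichlet problem---it holds because the boundary data on $\T_{1/\rho}$ is taken to be the reflected negative of that on $\T_\rho$, which you should state explicitly.
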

\begin{proof}
The concept of Szeg\H{o} function for a condenser initially was developed in \cite[Thm. 1.6]{LS_PAT92} in the case of an annulus. It was shown that if $Y$ is a continuous (strictly) positive function on $\T_s$, $s<1$, then there exists a function $u$, harmonic in $\A_s$, such that $u=\frac12\log(Y/\gmc_Y)$ on $\T_s$, $u\equiv0$ on $\T$, and $u(1/\bar z)=-u(z)$, $z\in\A_s$, where $\gmc_Y:=\exp\left\{\int_{\T_s} \log Y(\tau)\frac{|d\tau|}{2\pi s}\right\}$ is the geometric mean of $Y$. Moreover, it was shown that $u$ has single-valued harmonic conjugate $v$. Moreover, the latter is unique up to an additive constant. Finally, it was deduced that $\szc_s(Y;\cdot):=\exp\{u+iv\}$, the Szeg\H{o} function of $Y$ for $\A_s$, is a non-vanishing holomorphic function in $\A_s$ such that $\gmc_Y|\szc_s(Y;\cdot)|^2=Y$ on $\T_s$, $\szc_s(Y;z)\overline{\szc_s(Y;1/\bar z)}=1$, $z\in\A_s$, and $\szc_s(Y;\cdot)$ is an outer function in $\A_s$ with zero winding number on any curve in $\A_s$. The latter was not explicitly stated in \cite{LS_PAT92} but clearly holds since $\log|\szc_s(Y;\cdot)|=u$ and therefore it is the integral of its boundary values against the harmonic measure on $\partial\A_s$ while $\Arg(\szc_s(Y;\cdot))=v$, which has zero increment on any curve in $\A_s$. Obviously, the Szeg\H{o} function for $\A_s$ is unique up to a multiplicative unimodular constant.

Let now $y^+$ and $y^-$ be two continuous positive functions on $[c,d]$ whose values at the endpoints coincide. We define the geometric mean and the Szeg\H{o} function of the pair $\mathfrak{y}:=(y^+,y^-)$ for the condenser $\dr\cap\dr^*$ by
\[
\gmc_\mathfrak{y} := \gmc_Y \quad \mbox{and} \quad \szc_\mathfrak{y}(z) := \szc_\rho(Y;\mpc(z)), \quad z\in\dr\cap\dr^*,
\]
respectively, where $Y(\tau) = y^\pm(\mpc^{-1}(\tau))$, $\tau\in\T^\pm_\rho$. It is an immediate consequence of the corresponding properties of $\szc_\rho(Y;\cdot)$ and $\mpc$ that $\szc_\mathfrak{y}$ is outer, has non-tangential continuous boundary values on both sides of $[c,d]$ and $[c,d]^*$ whenever $\mathfrak{y}$ is a Dini-continuous pair\footnote{This means that $Y$ and therefore $\log Y$ are Dini-continuous as $\mpc^{-1}$ is Lipschitz on $\T_\rho$. Hence, the boundary values of $v$ are continuous on $\partial\A_\rho$ \cite[Ch. III]{Garnett}.}, has winding number zero on any curve in $\dr\cap\dr^*$, and satisfies
\[
|\szc_\mathfrak{y}^\pm(t)|^2 = \left\{
\begin{array}{ll}
y^\pm(t)/\gmc_\mathfrak{y},   & t\in [c,d], \smallskip \\
\gmc_\mathfrak{y}/y^\pm(1/t), & t\in [c,d]^*,
\end{array}
\right.
\]
and $\szc_\mathfrak{y}(z)\overline{\szc_\mathfrak{y}(1/\bar z)} = 1$, $z\in\dr\cap\dr^*$. 

Now, put $y^\pm := |b^2w\dot\mes|$ and $y_n^\pm := \left|(r_n/r)^\pm \szf_{b^2w\dot\mes}^\pm\right|^2$. Observe that in this case
\[
y_n^+(t)y_n^-(t) = |\szf_{b^2w\dot\mes}^+(t)\szf_{b^2w\dot\mes}^-(t)|^2 = \left|\frac{(b^2w\dot\mes)(t)}{\gm_{b^2w\dot\mes}}\right|^2 = \frac{y^+(t)y^-(t)}{|\gm_{b^2w\dot\mes}|^2},
\]
$t\in[c,d]$, by \eqref{eq:szegodecomp} and \eqref{eq:rkpm}. Then for $\mathfrak{y}_n:=(y_n^+,y_n^-)$ we get that
\begin{eqnarray}
\log\gmc_{\mathfrak{y}_n} &=& \int_{\T^+_\rho} \log[Y_n(\tau)Y_n(\bar\tau)]\frac{|d\tau|}{2\pi\rho} = \int_{\T^+_\rho} \log[y_n^+(t)y_n^-(t)]\frac{|d\tau|}{2\pi\rho} \nonumber \\
{} &=& \int_{\T^+_\rho} \log\left[\frac{y^+(t)y^-(t)}{|\gm_{b^2w\dot\mes}|^2}\right]\frac{|d\tau|}{2\pi\rho} = \log\gmc_\mathfrak{y} - \log|\gm_{b^2w\dot\mes}|, \nonumber
\end{eqnarray}
where $t=\mpc^{-1}(\tau)=\mpc^{-1}(\bar\tau)$. Further, by \eqref{eq:ged} and \eqref{eq:m}, we have that
\[
\log\gmc_\mathfrak{y} = \int_{[c,d]} \log\left|(b^2w\dot\mes)(t)\right| \frac{d|\mpc^+(t)|}{\pi\rho} = \log\gmc.
\]
So, $\gmc_{\mathfrak{y}_n} = \gmc/|\gm_{b^2w\dot\mes}| = \hat\gmc^2$ and therefore the claim of the proposition follows by setting $\szc_n := \szc_{\mathfrak{y}_n}$ with the chosen normalization (recall that the functions $\szc_{\mathfrak{y}_n}$ are uniquely defined up to a unimodular constant).
\end{proof}

The following Lemma was proved in \cite[Lem. 4.7]{BStW01}.

\begin{lem}
\label{lem:BStW}
Let $U$ be a domain, $\partial U=K_1\cup K_2$, $K_1$ and $K_2$ be two disjoint compact sets in $\C$, and $u$ be a harmonic function in $U$. Assume that
\[
\int_\Gamma \frac{\partial u}{\partial n}ds = 0,
\]
where $\partial/\partial n$ and $ds$ are, respectively, the normal derivative and the arclength differential on $\Gamma$, and the latter is an oriented smooth Jordan curve that separates $K_1$ from $K_2$, has winding number 1 with respect to any point of $K_1$, and winding number 0 with respect to any point of $K_2$. Then
\[
\sup_{z^\prime\in K_1} \limsup_{z\to z^\prime,~ z\in U} u(z) \geq \inf_{z^\prime\in K_2} \liminf_{z\to z^\prime,~z\in U} u(z)
\]
and the same relation holds with $K_1$ and $K_2$ interchanged.
\end{lem}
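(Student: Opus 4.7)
The plan is to argue by contradiction. Suppose
\[
M := \sup_{z'\in K_1}\limsup_{z\to z',\,z\in U}u(z) < \inf_{z'\in K_2}\liminf_{z\to z',\,z\in U}u(z) =: m,
\]
and I will produce a separating cycle in $U$ on which the flux of $\nabla u$ has a strict sign, contradicting the zero-flux hypothesis. First I would fix a regular value $c\in(M,m)$ of $u$ (available by Sard's theorem; the case where $u$ is constant is trivial since then $M=m$). By the definitions of $M$ and $m$ there are disjoint open neighborhoods $N_1\supset K_1$ and $N_2\supset K_2$ with $u<c$ on $N_1\cap U$ and $u>c$ on $N_2\cap U$. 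Hence the level set $L_c:=\{z\in U : u(z)=c\}$ lies in the complement of $N_1\cup N_2$, so it is compact in $U$, and being the preimage of a regular value it is a smooth $1$-submanifold, i.e., a finite disjoint union of smooth Jordan curves.

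Next I would exploit that $L_c$ separates $K_1$ from $K_2$ inside $U$: any continuous path from $K_1$ to $K_2$ in $U$ must pass through both $\{u<c\}$ and $\{u>c\}$, hence must meet $L_c$. Consequently, some union $L'$ of components of $L_c$ constitutes a $1$-cycle in $U$ homologous to $\Gamma$. Since $u$ is harmonic, the conjugate differential $*du=-u_y\,dx+u_x\,dy$ is closed on $U$, so its period along a cycle depends only on the homology class of the cycle. Combined with the hypothesis, this yields
\[
\int_{L'}\frac{\partial u}{\partial n}\,ds\;=\;\int_\Gamma\frac{\partial u}{\partial n}\,ds\;=\;0,
\]
once $L'$ is oriented to inherit the winding-number convention of $\Gamma$.

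Finally I would invoke Hopf's boundary point lemma applied to the non-constant harmonic function $u-c$ on the open set $\{u>c\}\cap U$: along the smooth boundary piece $L_c$ (which satisfies the interior sphere condition) the outward normal derivative of $u-c$ is strictly negative, so $\partial u/\partial n$ has a strict constant sign along $L'$ when $n$ is directed from $\{u<c\}$ toward $\{u>c\}$. Hence $\int_{L'}\partial u/\partial n\,ds\neq 0$, contradicting the previous display. The reversed statement follows by applying the same argument to $-u$, which also has zero flux through $\Gamma$. The most delicate step will be the topological extraction of $L'$ from $L_c$ with a coherent orientation; this is handled by the observations that $\overline{\{u>c\}\cap U}$ meets $\partial U$ only in $K_2$ and $\overline{\{u<c\}\cap U}$ only in $K_1$, so the relative boundary in $U$ of either super- or sub-level set is automatically a separating cycle of the correct homology type.
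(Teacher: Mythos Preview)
The paper does not give its own proof of this lemma; it simply quotes the statement and cites \cite[Lem.~4.7]{BStW01}. So there is nothing to compare your approach against within the paper itself.

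That said, your sketch is correct and is essentially the standard argument. Two small remarks to tighten it. First, Hopf's boundary-point lemma is more than you need: having chosen $c$ to be a regular value, you already know $\nabla u\neq 0$ on $L_c$, and since $L_c$ is a level curve, $\nabla u$ is normal to it and points from $\{u<c\}$ into $\{u>c\}$; hence $\partial u/\partial n=\pm|\nabla u|$ has strict sign without any further appeal. Second, for the ``delicate'' topological extraction of $L'$, the cleanest way to pin down the homology is to use the given Jordan curve $\Gamma$: since $K_1$ lies inside $\Gamma$ and $K_2$ outside, every connected component of $S^2\setminus U$ (whose boundary is connected, by unicoherence of $S^2$, and hence lies entirely in $K_1$ or entirely in $K_2$) sits entirely inside or entirely outside $\Gamma$. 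Then the set $\hat A:=\{u<c\}\cup F_1$, where $F_1$ is the union of the complementary components meeting $K_1$, is open with $\partial\hat A=L_c$, and $\hat A$ contains exactly those complementary components that $\Gamma$ encircles. Thus $\partial\hat A$ and $\Gamma$ have identical winding numbers about each component of $S^2\setminus U$, whence $[\partial\hat A]=[\Gamma]$ in $H_1(U)$, and your period comparison goes through.
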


\begin{proof}[Proof of Theorem \ref{thm:mer2}] It was shown in Lemma \ref{lem:auxS} that $q_n$ can be written as $u_{n-m}q_{n,m}$ and the behavior of $u_{n-m}$ is governed by Theorem \ref{thm:ortho} with $\nu_n$ defined in (\ref{eq:dnun}). Thus, we have from (\ref{eq:nhop}) that
\begin{equation}
\label{eq:zz1}
1+o(1) = u_{n-m}\szf_{n-m} = u_{n-m}(\cp\map)^{n-m}\szf_{q\dot\mes/\widetilde q_n^2}\szf_{q_{n,m}w_n}
\end{equation}
locally uniformly in $\dr$. Further, since $q_{n,m}w_n$ converges to $qw$ uniformly on $[c,d]$ by Lemma \ref{lem:auxS} and Theorem \ref{thm:mer1} we get that $\szf_{q_{n,m}w_n} = [1+o(1)]\szf_{qw}$ uniformly in $\dr$ and therefore we obtain from (\ref{eq:zz1}) that
\begin{equation}
\label{eq:zz2}
1 + o(1) = u_{n-m}\cp^{n-m}\szf_{b^2w\dot\mes}~\map^{n-m}\szf_{\widetilde q^2/\widetilde q_n^2}
\end{equation}
locally uniformly in $\dr$, where $b^2=q^2/\widetilde q^2$. Now, it follows from (\ref{eq:szegopoly}) that
\[
\szf_{\widetilde q^2/\widetilde q_n^2} = \szf_{\widetilde q/\widetilde q_n}^2 = \frac{\map^{m-n}}{\gm_{\widetilde q/\widetilde q_n}} \frac{\widetilde q}{\widetilde q_n} \frac{r_n}{r},
\]
where $r:=r_m(\widetilde q;\cdot)$ and $r_n:=r_n(\widetilde q_n;\cdot)$ as in Proposition \ref{pr:2}. Hence, we deduce from (\ref{eq:zz2}) that
\begin{equation}
\label{eq:zz3}
1 + o(1) = \frac{b_n}{b}\frac{q}{q_{n,m}} \frac{\cp^{n-m}}{\gm_{\widetilde q/\widetilde q_n}} \szf_{b^2w\dot\mes}\frac{r_n}{r}
\end{equation}
locally uniformly in $\dr$. Then Lemma \ref{lem:auxS} implies that
\begin{equation}
\label{eq:zz4}
\lambda_nX_n\szc_nb_n\mpc^{m-n}b^{-1} = 1+o(1),
\end{equation}
locally uniformly in $\da$, where
\[
\lambda_n := \frac{\hat\gmc(\cp\rho)^{n-m}}{\gm_{\widetilde q/\widetilde q_n}} \quad \mbox{and} \quad X_n(z) := \frac{\szf_{b^2w\dot\mes}}{\hat\gmc\szc_n} \frac{r_n}{r} \left(\frac{\mpc}{\rho}\right)^{n-m}.
\]
Now, we shall show that
\begin{equation}
\label{eq:zz5}
\lambda_nX_n = 1+o(1) \quad \mbox{uniformly in} \quad \overline\D.
\end{equation}
Observe, that
\begin{equation}
\label{eq:zz6}
|X_n^\pm| = \left|\left(\frac{\szf_{b^2w\dot\mes}}{\hat\gmc\szc_n} \frac{r_n}{r}\right)^\pm\right| \equiv 1 \quad \mbox{on} \quad [c,d]
\end{equation}
by the very definition of $\szc_n$ (see Proposition \ref{pr:2}). Moreover, since the zeros of $r$, $\z(\widetilde q)$, lie outside of $\overline\D$ and the zeros of $r_n$, $\z(\widetilde q_n)$, approach $[c,d]^*$ and $\z(\widetilde q)$ by Lemma \ref{lem:auxS} and Theorem \ref{thm:ortho}, the functions $X_n$ are zero free in some neighborhood of $\overline\D$, where the values on $[c,d]$ are twofold.  Further, the winding number of $X_n$ along any smooth Jordan curve encompassing $[c,d]$ in $\D$ is equal to zero. Indeed, the winding number of $\szf_{b^2w\dot\mes}/\szc_n$ on such a curve is zero by the properties of Szeg\H{o} functions, $r_n/r$ has winding number $m-n$ since it is meromorphic outside of $[c,d]$ with $n$ zeros and $m$ poles outside of $\overline\D$, and it follows from \cite[Ch. VI]{Nehari} that $\mpc$ has winding number one on any such curve. Thus, $\log X_n$ are well-defined holomorphic functions in $\overline\D\setminus[c,d]$. In turn, this means that $\log|X_n|$ satisfies the conditions of Lemma \ref{lem:BStW} with $U=\D\setminus[c,d]$. Applying this lemma in both directions, we get from (\ref{eq:zz6}) that
\begin{equation}
\label{eq:zz7}
\inf_\T|X_n|\leq \sup_{[c,d]}|X_n^\pm| = 1 = \inf_{[c,d]} |X_n^\pm| \leq \sup_\T|X_n|.
\end{equation}
In another connection, (\ref{eq:zz3}) and \eqref{eq:Qnm} yield that uniformly on $\T$ we have
\begin{equation}
\label{eq:zz8}
|X_n| = \left|\frac{\gm_{\widetilde q/\widetilde q_n}}{\hat\gmc(\cp\rho)^{n-m}} ~ \frac{\cp^{n-m}}{\gm_{\widetilde q/\widetilde q_n}} \szf_{b^2w\dot\mes}\frac{r_n}{r}\right| = \frac{1+o(1)}{|\lambda_n|}
\end{equation}
since $\szc_n$, $\mpc$, $b_n$, and $b$ are unimodular on $\T$. Combining (\ref{eq:zz7}) with (\ref{eq:zz8}), we get that $|\lambda_n|=1+o(1)$ and therefore
\begin{equation}
\label{eq:5stars}
|\lambda_nX_n| = \left|\frac{(\cp\mpc)^{n-m}}{\gm_{\widetilde q/\widetilde q_n}} \frac{\szf_{b^2w\dot\mes} r_n}{\szc_n r}\right| = 1 + o(1) \quad \mbox{uniformly in} \quad \overline\D
\end{equation}
by the maximum principle for harmonic functions applied to $\pm\log|X_n|$ in $\D\setminus[c,d]$. Hence, $\{\lambda_nX_n\}$ is a normal family of harmonic functions in $\D\setminus[c,d]$ and all the limit points of this family are the unimodular constants. Therefore (\ref{eq:zz5}) follows from the normalization of $\szc_n$ (see Proposition \ref{pr:2}) and the fact that $\mpc(1)$=1.

Clearly, we can rewrite (\ref{eq:zz4}) with the help of (\ref{eq:zz5}) as
\[
\szc_nb_n\mpc^{m-n}b^{-1} = 1+o(1) \quad \mbox{uniformly on compact subsets of} \quad \overline\D\cap\da.
\]
Now, recall that $\overline{\szc_n(1/\bar\cdot)} = 1/\szc_n$. Moreover, the same property holds for $b_n$, $b$, and $\mpc$. Thus,
\[
\szc_nb_n\mpc^{m-n}b^{-1} =  1/\overline{(\szc_nb_n\mpc^{m-n}b^{-1})(1/\bar\cdot)} = 1+o(1)
\]
uniformly on closed subsets of $\das\setminus\D$ and (\ref{eq:mer2}) follows.

It only remains to prove (\ref{eq:mer3}) and (\ref{eq:mer4}). By the very definition of $\gamma_n$ in Theorem \ref{thm:ortho}, we have that
\[
|\gamma_{n-m}| = \frac{2|\gm_{\dot\nu_{n-m}}|}{\cp^{2(n-m)}} = [1+o(1)] \frac{2|\gm_{b^2w\dot\mes}||\gm_{\widetilde q/\widetilde q_n}^2|}{\cp^{2(n-m)}}
\]
by Lemma \ref{lem:auxS} and limit (\ref{eq:mer1}). Further, the very definitions of $\hat\gmc$ and $\lambda_n$ yield that
\[
\frac{2|\gm_{b^2w\dot\mes}||\gm_{\widetilde q/\widetilde q_n}^2|}{\cp^{2(n-m)}} =  \frac{2|\gm_{b^2w\dot\mes}|\hat\gmc^2}{\rho^{2(m-n)}} ~ \frac{|\gm_{\widetilde q/\widetilde q_n}|^2}{\hat\gmc^2(\cp\rho)^{2(n-m)}} = \frac{2\gmc\rho^{2(n-m)}}{|\lambda_n|^2}.
\]
Since $|\lambda_n|=1+o(1)$, it holds that
\begin{equation}
\label{eq:zz9}
\gamma_{n-m} = [1+o(1)]|\gamma_{n-m}| = [1+o(1)]2\gmc\rho^{2(n-m)},
\end{equation}
where we used (\ref{eq:aa6}). Thus, (\ref{eq:mer3}) follows from (\ref{eq:aa5}). Finally, we deduce from (\ref{eq:aa2}) and (\ref{eq:asympyn}) that
\[
(\ct-g_n) = [1+o(1)] \frac{q_{n,m}}{q} \frac{\gamma_{n-m}}{b_n^2w_n}\frac{1+o(1)}{\sr}
\]
uniformly on compact subsets of $\da\cap\overline\D$. Since $q_{n,m}/q\to1$ by Lemma \ref{lem:auxS}, $w_n\to w$ by (\ref{eq:mer1}), and using (\ref{eq:zz9}) with (\ref{eq:mer2}), (\ref{eq:mer4}) follows.
\end{proof}

\begin{proof}[Proof of Theorem \ref{thm:mer3}] Recall that Lemma \ref{lem:auxS} holds under the conditions of this theorem and the Szeg\H{o} functions $\szc_n$ exist for $p=2$ as well. As the starting point of the proof of Theorem \ref{thm:mer2} was the application of (\ref{eq:nhop}) with $\nu_n$ defined in (\ref{eq:dnun}), all we need to do is to show that the conditions of Theorem \ref{thm:ortho} still hold under the present assumptions. This is tantamount to show that $\Upsilon_x$, defined in (\ref{eq:Upsilon}), is minorized by $\Psi_x$, defined in the statement of the theorem, for all $x\in\x$. In other words, that
\[
\Psi_x \leq \liminf_{n\to\infty} \min_{\pm}\left\{\left|(r_n\scf_{qq_{n,m}h})^\pm(x)\right|\right\}, \quad x\in\x.
\]
Equivalently, we need to show that
\[
- \liminf_{n\to\infty} \log|r_n^\pm(x)|^2 = \limsup_{n\to\infty} \log|r_n^\mp(x)|^2 \leq \frac{4s_1(V_h+2m\pi)}{1-s_0}, \quad x\in\x,
\]
by Lemma \ref{lem:auxS} and the definition of $\Psi_x$. Let, as in the proof of Lemma \ref{lem:auxS}, $\xi_{j,n}$, $j=1,\ldots,n$, be the zeros of $q_n$. Then we get from (\ref{eq:sums}) that
\[
\sum_{j=1}^n|\map(1/\bar\xi_{j,n})-\map(1/\xi_{j,n})| \leq 2s_1 \sum_{j=1}^n(\pi-\ang(\xi_{j,n})) \leq 2s_1(V_h+2m\pi),
\]
where we used \cite[Lem. 3.2]{uBY1} for the last inequality. Put $\hat r_n:=r_n\circ\map^{-1}$. Then, exactly as we did to prove (\ref{eq:boundrnhat}), we obtain that
\[
\log|\hat r_n| \leq 2s_1(V_h+2m\pi)/(1-s_0),
\]
which finishes the proof of the theorem.
\end{proof}

To prove Theorem \ref{thm:mer4} we need the following lemma.

\begin{lem}
\label{lem:auxR}
Let $R=P/Q$ be a rational function of degree $d$, $\zeta\in\C$, and $\delta>0$. Assume further that $P$ and $Q$ have no zeros in $\{z:|z-\zeta|\leq\delta\}$. Then for any $k<d$, $k\in\N$,  there exists $c_k=c_k(\delta)$ independent of $R$ such that
\[
\left|R^{(k)}(\zeta)/R(\zeta)\right| \leq c_kd^k.
\]
\end{lem}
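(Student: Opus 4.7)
The plan is to exploit the fact that the logarithmic derivative of a rational function admits a very transparent partial fraction expansion. Explicitly, for $R=P/Q$ we have
\[
R^\prime/R \;=\; P^\prime/P - Q^\prime/Q \;=\; \sum_{p\in\z(P)}\frac{m(p)}{z-p} - \sum_{q\in\z(Q)}\frac{m(q)}{z-q},
\]
a sum of at most $2d$ simple fractions counting multiplicities. Since by hypothesis neither $P$ nor $Q$ has a zero in $\{|z-\zeta|\leq\delta\}$, every pole $z_0$ of $R^\prime/R$ satisfies $|z_0-z|\geq\delta/2$ for $|z-\zeta|=\delta/2$. Hence $|R^\prime/R(z)|\leq 4d/\delta$ uniformly on that circle.

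Next, I would apply the Cauchy integral formula to $R^\prime/R$ on the circle $|z-\zeta|=\delta/2$ to estimate all its derivatives at $\zeta$: for every $j\geq 0$,
\[
\left|(R^\prime/R)^{(j)}(\zeta)\right| \;\leq\; j!\,(4d/\delta)\,(\delta/2)^{-j} \;=\; C_j(\delta)\,d.
\]
Equivalently, each logarithmic derivative $(\log R)^{(j)}(\zeta)$, for $j=1,\dots,k$, is bounded by a constant depending only on $j$ and $\delta$ multiplied by $d$.

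Finally, to translate this into a bound on $R^{(k)}/R$ itself, I would invoke the Fa\`a di Bruno formula for the exponential, which expresses
\[
R^{(k)}/R \;=\; B_k\bigl((\log R)^\prime,\,(\log R)^{\prime\prime},\,\ldots,\,(\log R)^{(k)}\bigr),
\]
where $B_k$ is the $k$-th complete Bell polynomial. Each monomial in $B_k$ is indexed by a partition $1\cdot j_1+2\cdot j_2+\cdots+k\cdot j_k=k$ and has the form $\prod_{i=1}^k \bigl((\log R)^{(i)}\bigr)^{j_i}$; using the bounds just established, this monomial is majorized by a constant times $d^{\,j_1+\cdots+j_k}$. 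Since $j_1+\cdots+j_k\leq 1\cdot j_1+\cdots+k\cdot j_k=k$, and one may harmlessly assume $d\geq 1$ (the case $d=0$ being trivial as then $R$ is constant), each monomial is at most $c_k(\delta)\,d^k$. Summing over the finitely many monomials in $B_k$ yields the desired inequality.

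There is really no serious obstacle: the estimate is quantitative and elementary, with all combinatorial constants absorbed into $c_k$. The one tiny point to keep in mind is that the degree of $R$ controls the total number of simple poles of $R^\prime/R$ up to a factor of $2$, and no pole of $R^\prime/R$ outside a fixed neighborhood of $\zeta$ can contribute more than a bounded amount per pole; the power $d^k$ in the final bound arises precisely because the dominant monomial in $B_k$ is $\bigl((\log R)^\prime\bigr)^k$.
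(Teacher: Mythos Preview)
Your proof is correct. The paper takes a slightly different but equally elementary route: rather than working with $\log R$ as a whole, it handles numerator and denominator separately. It first bounds $|T^{(j)}(\zeta)/T(\zeta)|\leq d(d-1)\cdots(d-j+1)/\delta^j\leq (d/\delta)^j$ directly for any polynomial $T$ of degree at most $d$ with no zeros in the disk, then applies a Fa\`a di Bruno--type expansion to $1/T$ to obtain $|T(\zeta)(1/T)^{(j)}(\zeta)|\leq c_j^* d^j$, and finally combines the two via the Leibniz rule for $R=P\cdot(1/Q)$. Your approach is a bit more streamlined: by passing to the logarithmic derivative and invoking Cauchy's estimates on the circle of radius $\delta/2$, you obtain bounds on $(\log R)^{(j)}$ that are \emph{linear} in $d$ rather than of order $d^j$, and then a single application of the exponential Bell--polynomial identity finishes the job. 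Both arguments hinge on the same combinatorics; yours has the minor advantage of not splitting $P$ and $Q$ and of yielding sharper intermediate estimates, though the final constant $c_k(\delta)$ is comparable either way.
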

\begin{proof}
Clearly, if $T$ is a polynomial of degree at most $d$ with no zeros in $\{z:|z-\zeta|\leq\delta\}$, then
\[
\left|\frac{T^{(j)}(\zeta)}{T(\zeta)}\right| \leq \frac{d\cdot\ldots\cdot(d-j+1)}{\delta^j} \leq \left(\frac{d}{\delta}\right)^j, \quad j=1,\ldots,k.
\]
Thus, it can be checked that
\[
\left|T(\zeta)\left(\frac{1}{T(\zeta)}\right)^{(j)}\right| = \left|\sum_{l=1}^j\sum_{\sum d_i=l}\prod_{\sum s_id_i=j} c_{l,\{d_i\},\{s_i\}}\left(\frac{T^{(s_i)}(\zeta)}{T(\zeta)}\right)^{d_i}\right| \leq c_j^* d^j,
\]
$j=1,\ldots,k$, where coefficients $c_{l,\{d_i\},\{s_i\}}$ do not depend on $T$. Then
\[
\left|\frac{R^{(k)}(\zeta)}{R(\zeta)}\right| = \left|\sum_{j=1}^k \binom kj \frac{P^{(j)}(\zeta)}{P(\zeta)} Q(\zeta) \left(\frac{1}{Q(\zeta)}\right)^{(k-j)}\right| \leq \sum_{j=1}^k \binom kj \frac{c_{k-j}^*d^k}{\delta^j} =: c_k d^k. \nonumber
\]
\end{proof}

\begin{proof}[Proof of Theorem \ref{thm:mer4}] As the forthcoming analysis is local around $\eta$, we may suppose without loss of generality that $m(\eta)=m$, i.e. $\eta$ is the only zero of $q$.

Exactly as in (\ref{eq:aa7}), we obtain that
\begin{equation}
\label{eq:pp3}
y_n^{(k)}(\eta) = (Y_nq_{n,m})^{(k)}(\eta), \quad k=0,\ldots,m-1, \quad Y_n:= \frac{u_{n-m}^2w_np}{\gamma_{n-m}\widetilde q_n^2}.
\end{equation}
It is apparent from \eqref{eq:zz4}, \eqref{eq:zz5}, and foremost (\ref{eq:zz3}), which holds locally uniformly in $\dr$ rather then $\da$, that
\[
\frac{u_{n-m}}{\widetilde u_{n-m}}\frac{\szc_n}{\mpc^{n-m}} = 1 + o(1) \quad \mbox{locally uniformly in} \quad \D\cap\dr.
\]
So, we see using \eqref{eq:mer1}, \eqref{eq:Qnm}, \eqref{eq:aa5} with \eqref{eq:aa6}, and \eqref{eq:mer3} that
\begin{equation}
\label{eq:pp5}
Y_n = [1+o(1)]~ \frac{\mpc^{2(n-m)}}{\mathcal{T}\sigma_n}\frac{wp}{\szc_n^2\widetilde q^2} = \left[\frac{1}{2\gmc}+o(1)\right] \left(\frac{\mpc}{\rho}\right)^{2(n-m)}\frac{wp}{\szc_n^2\widetilde q^2}
\end{equation}
uniformly in some neighborhood of $\eta$. Thus, we obtain from (\ref{eq:pp3}) with $k=0$ that
\begin{equation}
\label{eq:pp6}
\chi_n^mq_{n,m}(\eta) = -1, \;\;\; \chi_n^m := -Y_n(\eta)/y_n(\eta),
\end{equation}
where for each $n$ we fixed an arbitrary root $\chi_n$. Observe also that $\chi_n$ tends to infinity geometrically fast by (\ref{eq:pp5}) since $|\mpc(\eta)|>\rho$ and the boundedness of $\{|y_n(\eta)|\}$, which is apparent from \eqref{eq:asympyn}. By putting $k=1$ in (\ref{eq:pp3}), we see that
\[
\chi_n^{m-1}q_{n,m}^\prime(\eta) = \frac{1}{\chi_n}\left(\frac{Y_n^\prime(\eta)}{Y_n(\eta)} - \frac{y_n^\prime(\eta)}{y_n(\eta)}\right) = o(1)
\]
since $\{y_n^\prime(\eta)/y_n(\eta)\}$ is a convergent sequence by \eqref{eq:asympyn} and $Y_n$ are rational functions, which do not vanish in some fixed neighborhood of $\eta$, multiplied by $w_n$, which form a convergent sequence by \eqref{eq:mer1},  the numbers $|(Y^\prime_n/Y_n)(\eta)|$ grow linearly with $n$ by Lemma \ref{lem:auxR} while $1/\chi_n$ decays exponentially. Continuing by induction, we get
\begin{equation}
\label{eq:pp4}
\chi_n^{m-k}q_{n,m}^{(k)}(\eta) = \sum_{j=1}^k  \binom kj \frac{Y_n^{(j)}(\eta)}{Y_n(\eta)} \frac{\chi_n^{m-k+j}q_{n,m}^{(k-j)}(\eta)}{\chi_n^j} - \frac{1}{\chi_n^k} \frac{y_n^{(k)}(\eta)}{y_n(\eta)} = o(1),
\end{equation}
for any $k=2,\ldots,m-1$. Hence, we deduce from (\ref{eq:pp6}) and (\ref{eq:pp4}) that
\[
\prod_{k=1}^m\left(z+\chi_n(\eta-\eta_{k,n})\right) = z^m + \sum_{k=0}^{m-1} \chi_n^{m-k} q_{n,m}^{(k)}(\eta) z^k = z^m + o(1) - 1,
\]
uniformly in some neighborhood of $\eta$. In particular, this means that
\[
\eta_{k,n} = \eta + \frac{1+\delta_{k,n}}{\chi_n}\exp\left\{\frac{2\pi ki}{m}\right\}, \quad k=1,\ldots,m,
\]
where $\delta_{k,n}=o(1)$ for each $k$ and $\prod_{k=1}^m(1+\delta_{k,n})=1$. By setting
\[
\left(A_{k,n}^\eta\right)^m := \frac{1+\delta_{k,n}}{\chi_n^m} \left(\frac{\rho}{\mpc(\eta)}\right)^{2(m-n)} = -[1+\delta_{k,n}] \left(\frac{\rho}{\mpc(\eta)}\right)^{2(m-n)} \frac{y_n(\eta)}{Y_n(\eta)},
\]
we see that (\ref{eq:mer5}) follows. The boundedness of $\{\max_k|A_{k,n}^\eta|\}$ is a consequence of (\ref{eq:pp5}) and \eqref{eq:asympyn}.
\end{proof}

\section{Proofs of Theorems \ref{thm:pade1} and \ref{thm:pade2}}
\label{sec:proofs1}

\begin{proof}[Proof of Theorem \ref{thm:pade1}] Let $q_n$ be the denominators of $\Pi_n$. We start by showing that
\begin{equation}
\label{eq:qnm}
q_n = u_{n-m}q_{n,m}, \quad q_{n,m}=(1+o(1))q,
\end{equation}
locally uniformly in $\C\setminus\z(q)$. This follows from \cite[Thm. 2.4]{BY09} in the same fashion as (\ref{eq:Qnm}) followed from \cite[Thm. 2.4]{uBY1}. The requirements placed on $\mes$ are the same, so they are satisfied. However, in \cite[Thm. 2.4]{BY09} there are also restrictions placed on the interpolation schemes. Namely, an interpolation scheme $\E$ should be such that $\supp(\E)\cap([c,d]\cup\z(q))=\emptyset$, the probability counting measures of points in $E_n$ would converge to some Borel measure with finite logarithmic energy, and the argument functions of polynomials $v_n$ would have uniformly bounded derivatives on $[c,d]$.

Clearly, the first two requirement placed on the interpolation scheme is the second requirement of the admissibility property. Hence, we only need to show the uniform boundedness of the derivatives of the arguments of $v_n$. Clearly, it amounts to show that
\begin{equation}
\label{eq:limarg}
\limsup_{n\to\infty}\frac12\left\|\im\left(\frac{v_n^\prime}{v_n}\right)\right\|_{[c,d]} = \limsup_{n\to\infty}\frac12\left\|\sum_{e\in E_n\cap\C}\im\left(\frac{1}{\cdot-e}\right)\right\|_{[c,d]}< \infty.
\end{equation}
Since $\im(t-e) = \im(\bar e-t)$ for $t\in[c,d]$, we have that
\begin{eqnarray}
\left|\sum\im\left(\frac{1}{t-e}\right)\right| &=& \frac12\left| \sum \im\left(\frac{1}{t-\Delta_n(e)}\right) - \sum \im\left(\frac{1}{t-\bar e}\right) \right| \nonumber \\
{} &=& \frac12\left|\im\left(\sum\frac{\Delta_n(e)-\bar e}{(t-\Delta_n(e))(t-\bar e)}\right)\right| \leq \frac{1}{2s^2}\sum |\Delta_n(e)-\bar e|, \nonumber
\end{eqnarray}
where the sums are taken over $e\in E_n\cap\C$ and $s>0$ is such that $|t-e|\geq s$ for all $e\in E_n$ and $n\in\N$. So, (\ref{eq:limarg}) and therefore (\ref{eq:qnm}) follow from the admissibility of $\E$.

It is well-known \cite[Lem. 6.1.2]{StahlTotik} and is easily seen from the defining properties of Pad\'e approximants and the Fubini-Tonelli theorem that
\begin{equation}
\label{eq:orthpade}
\int t^jq(t)q_n(t)\frac{d\mes(t)}{v_n(t)} = 0, \quad j=0,\ldots,n-m-1,
\end{equation}
and
\begin{equation}
\label{eq:errpade}
(\ct-\Pi_n)(z) = \frac{v_n(z)}{(q_nql_{n-m})(z)}\int\frac{(q_nql_{n-m})(t)}{z-t}\frac{d\mes(t)}{v_n(t)}, \quad z\in\da,
\end{equation}
for any polynomial $_{n-m}l$ of degree at most $n-m$. Now, using decomposition (\ref{eq:qnm}) and denoting
\begin{equation}
\label{eq:nun}
d\nu_n := \frac{q_{n+m,m}q}{v_{n+m}}d\mes = \frac{q_{n+m,m}qh\hbar\hbar_{\bf x}}{v_{n+m}}d\ed,
\end{equation}
orthogonality relations (\ref{eq:orthpade}) become
\[
\int t^ju_n(t)d\nu_n(t)=0, \quad j=0,\ldots,n-1.
\]
It is also quite easy to see that the asymptotic behavior of $u_n$ is governed by Theorem \ref{thm:ortho} applied with $h_n=q_{n+m,m}q$. The orthogonality relation above also imply that
\begin{equation}
\label{eq:Rn}
R_n(z) := \int\frac{u_n(t)}{z-t}d\nu_n(t) = \frac{1}{u_n(z)}\int\frac{u_n^2(t)}{z-t}d\nu_n(t), \quad z\in\dr.
\end{equation}
Thus, putting $l_{n-m}=u_{n-m}$, we can rewrite (\ref{eq:errpade}) as
\begin{equation}
\label{eq:pp1}
\ct-\Pi_n = \frac{v_nR_{n-m}}{u_{n-m}q_{n,m}q}.
\end{equation}
Hence, we derive from (\ref{eq:nhop}) and (\ref{eq:qnm}) that
\begin{equation}
\label{eq:almostdone1}
\ct-\Pi_n = [1+o(1)] \frac{v_n\gamma_{n-m}\szf_{n-m}^2}{\sr qq_{n,m}} = [2+o(1)] \gm_{\dot\nu_{n-m}}\szf_{\dot\nu_{n-m}}^2\frac{\map^{2(n-m)}v_n}{\sr q_{n,m}q}
\end{equation}
locally uniformly in $\dr$. Therefore, we get from (\ref{eq:szegopoly}) and (\ref{eq:nun}) that
\begin{eqnarray}
\label{eq:almostdone2}
\gm_{\dot\nu_{n-m}}\szf_{\dot\nu_{n-m}}^2\frac{\map^{2(n-m)}v_n}{q_{n,m}q} &=& \gm_{\dot\mes}\szf_{\dot\mes}^2 ~ \frac{\gm_{q_{n,m}}\szf_{q_{n,m}}^2}{q_{n,m}\map^m} ~ \frac{\gm_q\szf_q^2}{q\map^m} ~ \frac{v_n\map^{2n}}{\gm_{v_n}\szf_{v_n}^2} \nonumber \\
{} &=& \frac{\gm_{\dot\mes}\szf_{\dot\mes}^2~r_n}{r_m(q_{n,m};\cdot)r} = [1+o(1)]\gm_{\dot\mes}\szf_{\dot\mes}^2 ~ \frac{r_n}{r^2}
\end{eqnarray}
locally uniformly in $\da$, where $r_n$ and $r$ are defined as in the statement of this theorem. Combining (\ref{eq:almostdone1}) with (\ref{eq:almostdone2}) we get (\ref{eq:pade1}).

Finally, observe that the boundedness of the variation of argument of $h$ was needed in order to appeal to \cite[Thm. 2.4]{BY09}. However, when the rational summand of $\ct$ is not present ($q\equiv1$), (\ref{eq:pade1}) is a consequence of Theorem \ref{thm:ortho} only and the latter does not require the boundedness of the variation of argument of $h$.
\end{proof}
\begin{proof}[Proof of Theorem \ref{thm:pade2}]
The asymptotic equality in (\ref{eq:pade2}) is exactly the one in (\ref{eq:qnm}). The fact that $u_{n-m}$ have no zeros on compact sets in $\dr$ follows since the asymptotic behavior of $u_{n-m}$ is governed by Theorem \ref{thm:ortho} with $\nu_n$ given by (\ref{eq:nun}) and all the zeros of such orthogonal polynomials approach $[c,d]$.

Let $\eta\in\z(q)$. As in the proof of Theorem \ref{thm:mer4}, we may suppose without loss of generality that $m(\eta)=m$, i.e. $\eta$ is the only zero of $q$. Using the notation of Theorem \ref{thm:ortho}, we can rewrite (\ref{eq:pp1}) as
\[
\frac{u_{n-m}^2q_{n,m}q}{\gamma_nv_n}(\ct-\Pi_n) = \frac{1}{\gamma_{n-m}}u_{n-m}R_{n-m} =: y_n,
\]
or equivalently
\begin{equation}
\label{eq:pp2}
y_n = Y_nq_{n,m} + \frac{u_{n-m}(\ct_\mes q_n-p_n)}{\gamma_{n-m} v_n}q, \quad Y_n:= \frac{u_{n-m}^2p}{\gamma_{n-m} v_n},
\end{equation}
where $\ct=\ct_\mes+p/q$. It follows from (\ref{eq:nhop}) that
\[
y_n\sr = 1+ o(1) \quad \mbox{locally uniformly in} \quad \dr.
\]
In particular, it means that sequences $\{|y_n^{(k)}(\eta)|\}$ are uniformly bounded above and away from zero for all $k\in\N$. Moreover, (\ref{eq:pp2}) yields that
\[
y_n^{(k)}(\eta) = (Y_nq_{n,m})^{(k)}(\eta), \quad k=0,\ldots,m-1.
\]
This, for instance, implies that neither of $\eta_{k,n}$, $k=1,\ldots,m$, the zeros of $q_{n,m}$,  is equal to $\eta$. Further, using (\ref{eq:nhop}), (\ref{eq:szegopoly}), and (\ref{eq:qnm}), we get that
\[
Y_n = [1/2+o(1)]\map^{2m}p/(\gm_{\dot\mes q^2}\szf_{\dot\mes q^2}^{2}r_n)
\]
uniformly in some neighborhood of $\eta$. Now, it it clear that we may proceed exactly as in the proof of Theorem \ref{thm:mer4} with the only difference that we set
\[
\left(A_{k,n}^\eta\right)^m := (1+\delta_{k,n})r_n(\eta)/\chi_n^m = - (1+\delta_{k,n}) (r_ny_n/Y_n)(\eta). 
\]
\end{proof}

\section{Numerical Experiments}
\label{sec:num}

The Hankel operator $\ho_f$ with symbol $f\in H^\infty+C(\T)$ is of finite rank if and only if $f$ is a rational function \cite[Thm. 3.11]{Partington}. In practice one can only compute with finite rank operators, due to the necessity of ordering the singular values, so a preliminary rational approximation to $f$ is needed when the latter is not rational. One way to handle this problem is to truncate the Fourier series of $f$ at some high order $N$. This provides us with a rational function $f_N$ that approximates $f$ in the Wiener norm which, in particular, dominates any $L^p$ norm on the unit circle, $p\in[1,\infty]$. It was proved in \cite{HTG90} that the best approximation operator from $H^\infty_n$ (mapping $f$ to $g_n$ according to (\ref{eq:errFunDec})) is continuous in the Wiener norm provided $(n+1)$-st singular value of the Hankel operator is simple. It was shown in \cite[Cor. 2]{BLP00} that the last assertion is satisfied for Hankel operators with symbols in some open dense subset of $H^\infty+C(\T)$, and the same technique can be used to prove that it is also the case for the particular subclass (\ref{eq:ct}). Thus, even though the simplicity of singular values cannot be asserted beforehand, it is generically true. When it prevails, one can approximates $f_N$ instead of $f$ and get a close approximation to $g_n$ when $N$ is large enough. This amounts to perform the singular value decomposition of $\ho_{f_N}$ (see \cite[Ch. 16]{Young}).

As to Pad\'e approximants, we restricted ourselves to the classical case and we constructed their denominators by solving the orthogonality relations (\ref{eq:orthpade}) with $v_n\equiv1$. Thus, finding these denominators amounts to solving a system of linear equations whose coefficients are obtained from the moments of the measure $\mu$.

The following computations were carried with MAPLE 8 software using 35 digits precision. On the figures the solid line stands for the support of the measure and circles denote the poles of the correspondent approximants. The approximated function is given by the formula
\begin{eqnarray}
\ct(z) &=& \int_{[-0.7,0]}\frac{7e^{it}}{z-t}\frac{dt}{\sqrt{(t+0.7)(0.4-t)}} + \int_{[0,0.4]}\frac{it+1}{z-t}\frac{dt}{\sqrt{(t+0.7)(0.4-t)}} \nonumber \\
{} && +\frac{1}{5!(z-0.7-0.2i)^6}. \nonumber
\end{eqnarray}

\begin{figure}[!ht]
\centering
\includegraphics[scale=.3]{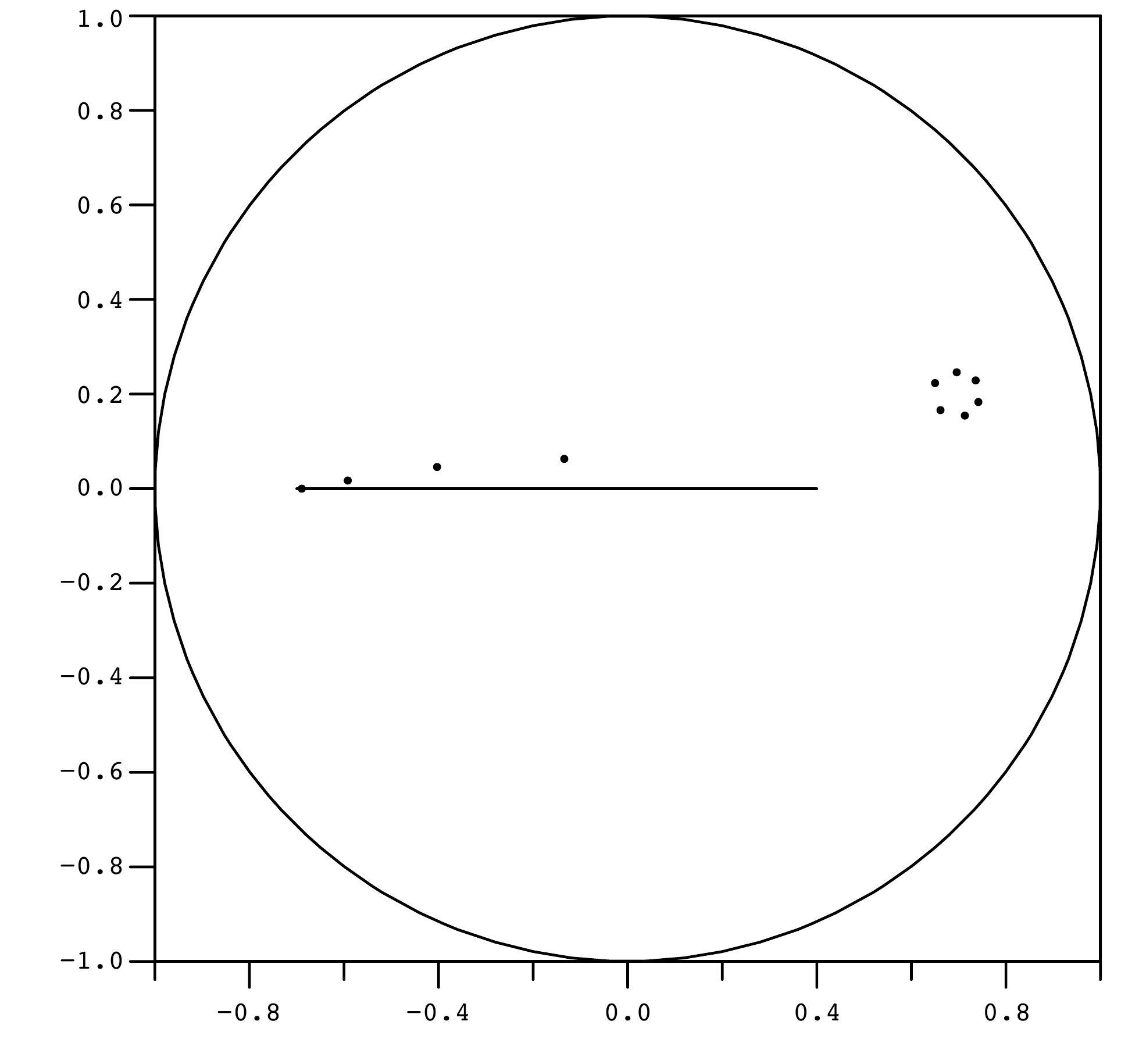}
\includegraphics[scale=.3]{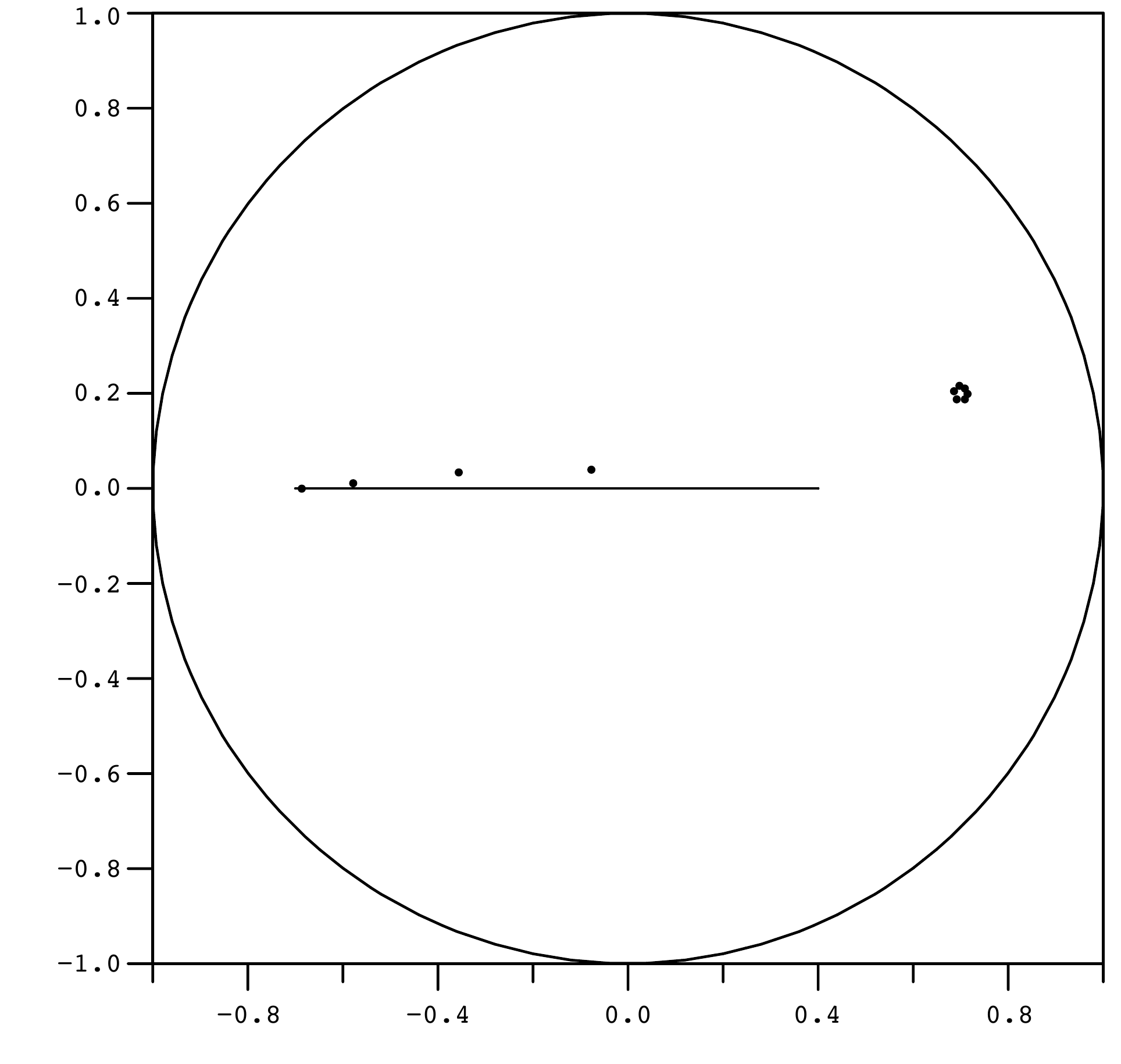}
\caption{\small Poles of Pad\'e (left) and AAK (right) approximants of degree 10.}
\end{figure}
\begin{figure}[!ht]
\centering
\includegraphics[scale=.3]{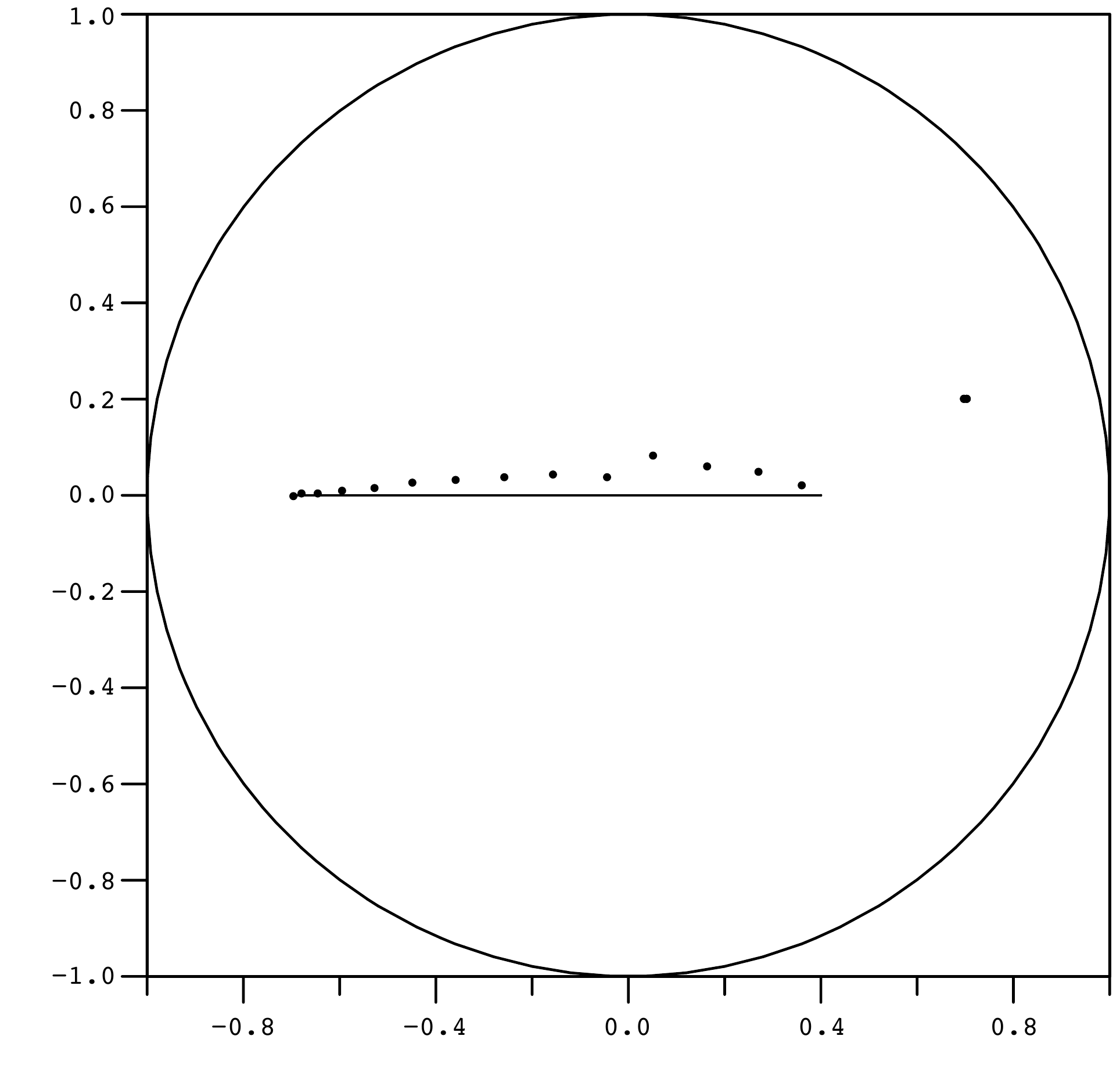}
\includegraphics[scale=.3]{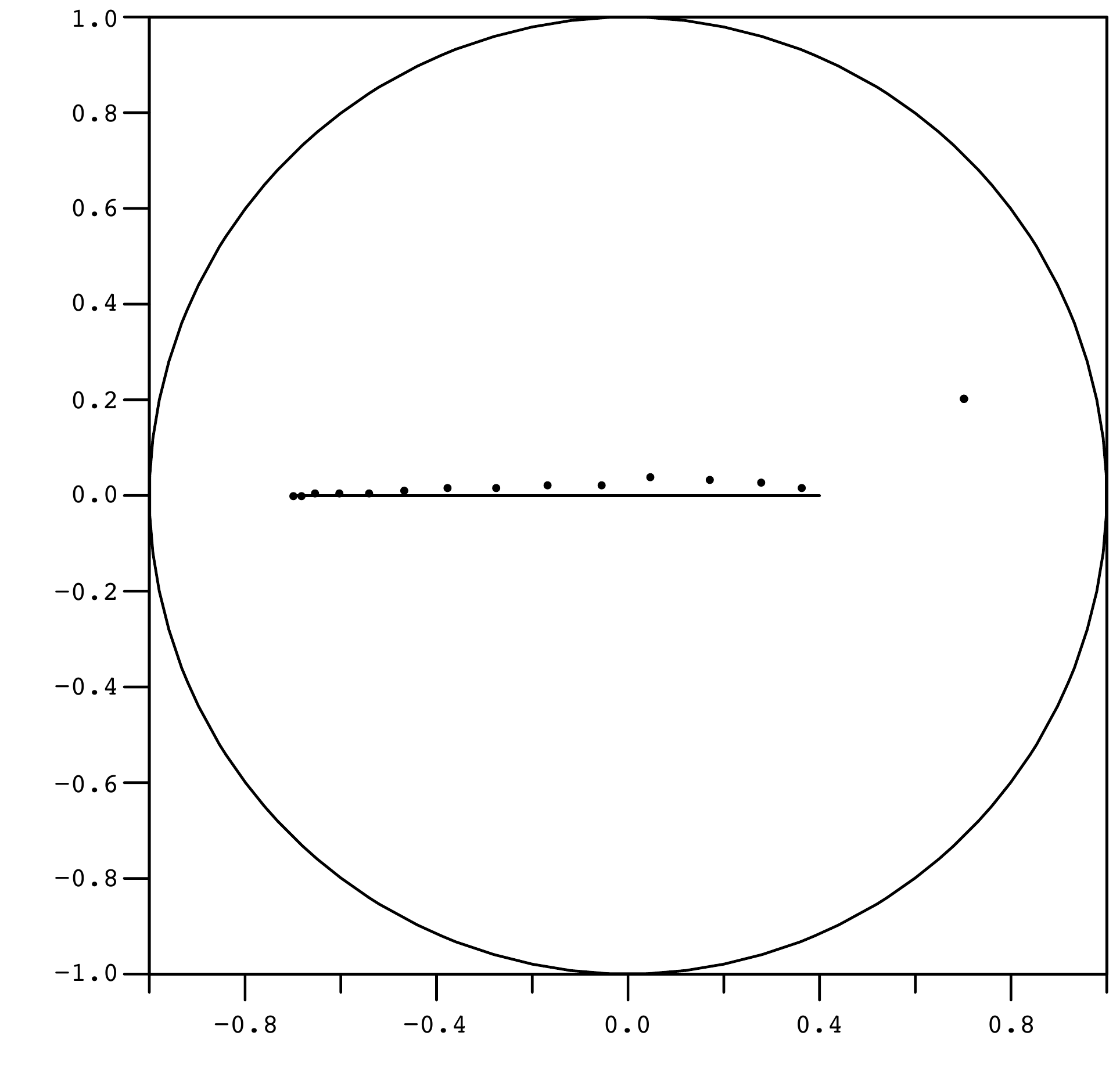}
\caption{\small Poles of Pad\'e (left) and AAK (right) approximants of degree 20.}
\end{figure}
\begin{figure}[!ht]
\centering
\includegraphics[scale=.3]{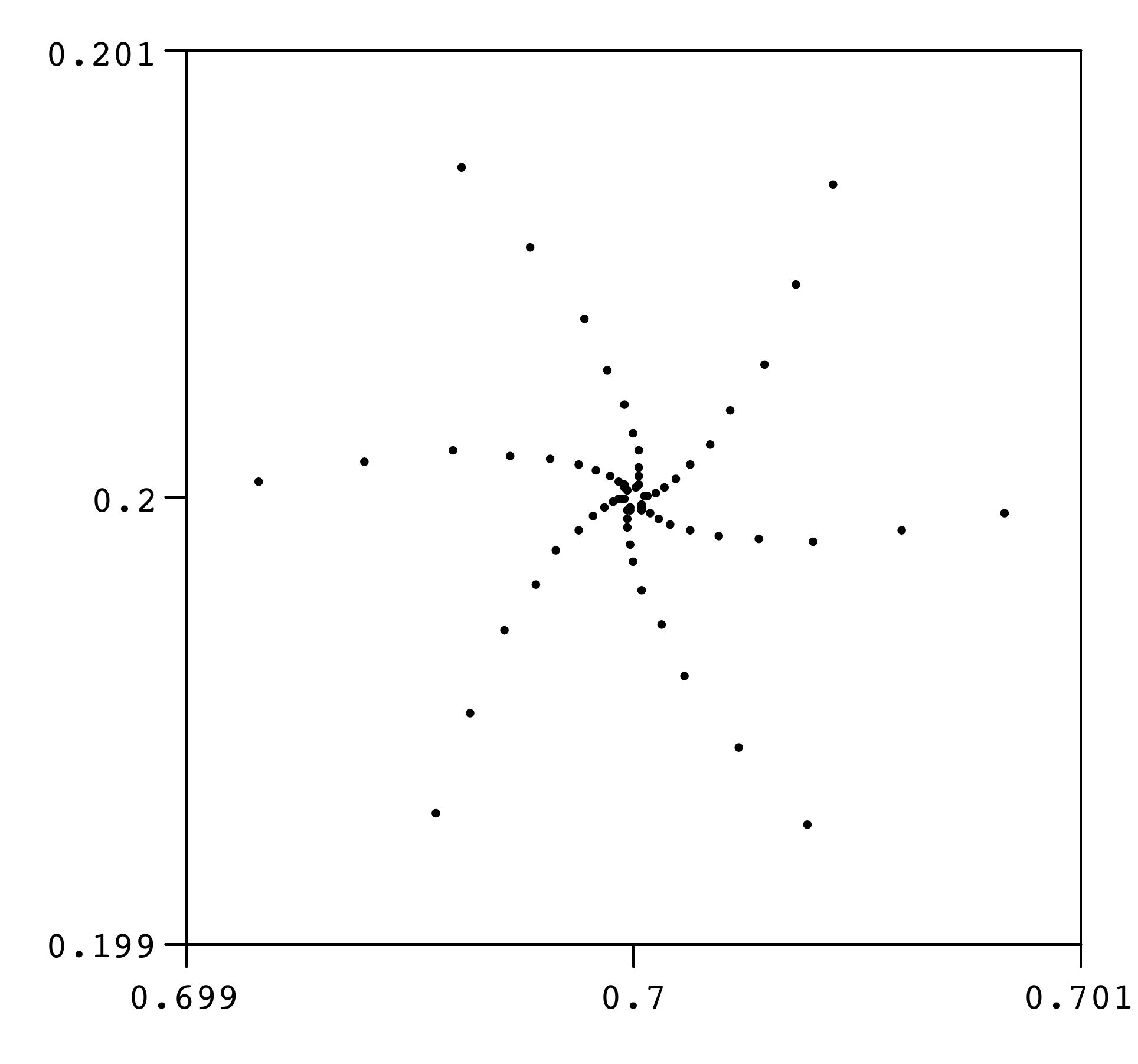}
\includegraphics[scale=.3]{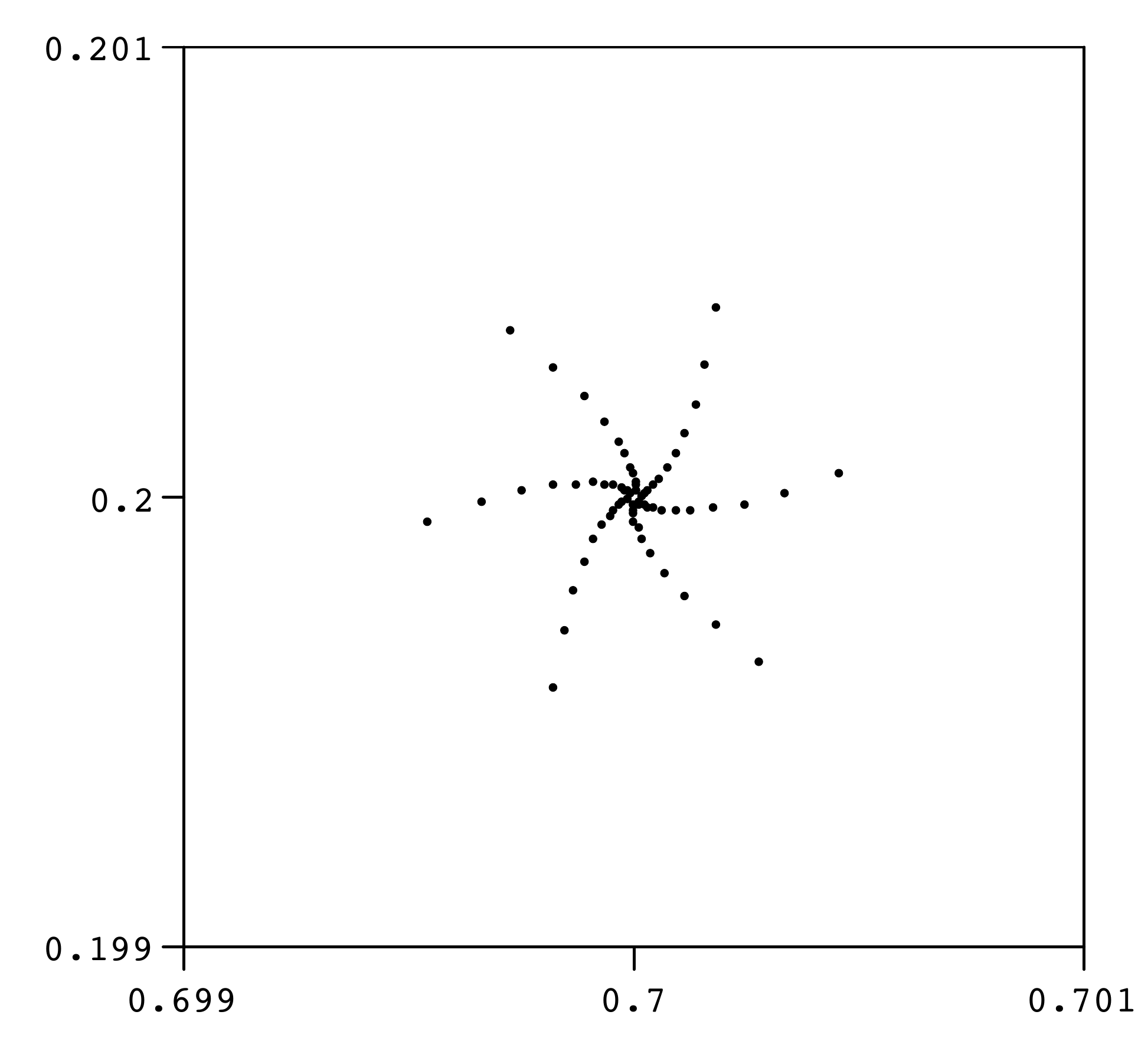}
\caption{\small Poles of Pad\'e (left) and AAK (right) approximants of degrees 21-33 lying in an neighborhood of the polar singularity.}
\end{figure}

\begin{acknowledgement}
I express my sincere gratitude to Dr. L. Baratchart for valuable discussions and comments, his reading the manuscript and suggesting this problem.
\end{acknowledgement}

\bibliographystyle{plain}
\bibliography{poles}

\end{document}